  \tikzstyle{root}=[rectangle,draw=blue!90]
  \tikzstyle{nonterminal}=[rectangle,rounded corners,fill=blue!15,draw=blue!15]
  \tikzstyle{terminal}=[rectangle]
  \tikzstyle{cut}=[thick,dotted,draw=green!50!black]
  \tikzstyle{local}=[color=green!50!black,text=green!25!black]
\DeclareMathOperator{\trans}{T}
\DeclareMathOperator{\atrans}{S}
\DeclarePairedDelimiter{\group}{(}{)}
\DeclarePairedDelimiter{\sqgroup}{[}{]}
\DeclarePairedDelimiter{\lsqgroup}{[}{)}
\DeclarePairedDelimiter{\usqgroup}{(}{]}
\DeclarePairedDelimiter{\set}{\{}{\}}
\DeclarePairedDelimiter{\avg}{\langle}{\rangle}
\DeclarePairedDelimiter{\norm}{\Vert}{\Vert}
\DeclarePairedDelimiter{\abs}{\vert}{\vert}
\DeclarePairedDelimiter{\floor}{\lfloor}{\rfloor}
\newcommand{\var}{Y}
\newcommand{\varvalues}{\mathcal{\var}}
\newcommand{\vargambles}{\gamblessymbol(\varvalues)}
\newcommand{\statesymbol}{X}
\newcommand{\statessymbol}{\mathcal{X}}
\newcommand{\gamblessymbol}{\mathcal{G}}
\newcommand{\state}[1]{\statesymbol_{#1}}
\newcommand{\xval}[1]{x_{#1}}
\newcommand{\zval}[1]{z_{#1}}
\newcommand{\states}[1]{\statessymbol_{#1}}
\newcommand{\stateset}{\statessymbol}
\newcommand{\gambles}{\gamblessymbol(\statessymbol)}
\newcommand{\normedgambles}{\gamblessymbol_1(\statessymbol)}
\newcommand{\stateto}[1]{\state{1:#1}}
\newcommand{\zstateto}[2]{\zvalto{#1}\statefromto{#1+1}{#2}}
\newcommand{\statefromto}[2]{\state{#1:#2}}
\newcommand{\xvalto}[1]{\xval{1:#1}}
\newcommand{\zvalto}[1]{\zval{1:#1}}
\newcommand{\xvalfromto}[2]{\xval{#1:#2}}
\newcommand{\statesto}[1]{\states{1:#1}}
\newcommand{\statesfromto}[2]{\states{#1:#2}}
\newcommand{\gambleson}[1]{\gamblessymbol(\states{#1})}
\newcommand{\gamblesto}[1]{\gamblessymbol(\states{1:#1})}
\newcommand{\nats}{\mathbb{N}}
\newcommand{\natswithzero}{\mathbb{N}_0}
\newcommand{\reals}{\mathbb{R}}
\newcommand{\xreals}{\mathbb{R}^*}
\newcommand{\rationals}{\mathbb{Q}}
\newcommand{\upplus}{\mathbin{+^*}}
\newcommand{\init}{\square}
\newcommand{\pth}{\omega}
\newcommand{\sits}{\pths^\lozenge}
\newcommand{\pths}{\Omega}
\newcommand{\precedes}{\sqsubseteq}
\newcommand{\sprecedes}{\sqsubset}
\newcommand{\sfollows}{\sqsupset}
\newcommand{\ucut}{U}
\newcommand{\vcut}{V}
\newcommand{\highcut}[2][a,b]{\ucut^{#1}_{#2}}
\newcommand{\lowcut}[2][a,b]{\vcut^{#1}_{#2}}
\newcommand{\ex}{E}
\newcommand{\lex}{\underline{\ex}}
\newcommand{\uex}{\overline{\ex}}
\newcommand{\aex}{F}
\newcommand{\alex}{\underline{\aex}}
\newcommand{\aaex}{Q}
\newcommand{\aalex}{\underline{\aaex}}
\newcommand{\aauex}{\overline{\aaex}}
\newcommand{\pr}{P}
\newcommand{\lpr}{\underline{\pr}}
\newcommand{\upr}{\overline{\pr}}
\newcommand{\joint}[1][]{\ex_{#1}}
\newcommand{\ljoint}[1][]{\lex_{#1}}
\newcommand{\ujoint}[1][]{\uex_{#1}}
\newcommand{\statljoint}[1][]{\lex_{\mathrm{st}}}
\newcommand{\xinllocal}[3][]{\aalex\group[#1]{#2#1\vert\xval{#3}}}
\newcommand{\xtoinlocal}[3][]{\aaex\group[#1]{#2#1\vert\xvalto{#3}}}
\newcommand{\xtoinllocal}[3][]{\aalex\group[#1]{#2#1\vert\xvalto{#3}}}
\newcommand{\xtoinulocal}[3][]{\aauex\group[#1]{#2#1\vert\xvalto{#3}}}
\newcommand{\sitinlocal}[3][]{\aaex\group[#1]{#2#1\vert#3}}
\newcommand{\sitinllocal}[3][]{\aalex\group[#1]{#2#1\vert#3}}
\newcommand{\sitinulocal}[3][]{\aauex\group[#1]{#2#1\vert#3}}
\newcommand{\lmargin}[2][]{\aalex\group[#1]{#2#1\vert\init}}
\newcommand{\ltransition}[3][]{\aalex\group[#1]{#2#1\vert#3}}
\newcommand{\xtoinlglobal}[3][]{\lex\group[#1]{#2#1\vert\xvalto{#3}}}
\newcommand{\ztoinlglobal}[3][]{\lex\group[#1]{#2#1\vert\zvalto{#3}}}
\newcommand{\zxtoinlglobal}[4][]{\lex\group[#1]{#2#1\vert\zvalto{#3}\xvalfromto{#3+1}{#4}}}
\newcommand{\xtoinuglobal}[3][]{\uex\group[#1]{#2#1\vert\xvalto{#3}}}
\newcommand{\stateinlglobal}[3][]{\lex\group[#1]{#2#1\vert\state{#3}}}
\newcommand{\statetoinlglobal}[3][]{\lex\group[#1]{#2#1\vert\stateto{#3}}}
\newcommand{\zstatetoinlglobal}[4][]{\lex\group[#1]{#2#1\vert\zvalto{#3}\statefromto{#3+1}{#4}}}
\newcommand{\sitinlglobal}[3][]{\lex\group[#1]{#2#1\vert#3}}
\newcommand{\sitinuglobal}[3][]{\uex\group[#1]{#2#1\vert#3}}
\newcommand{\newlglobal}[3]{\lex_{\vert#1}(#2\vert#3)}
\newcommand{\newuglobal}[3]{\uex_{\vert#1}(#2\vert#3)}
\newcommand{\altsitinlglobal}[3][]{\underline{F}\group[#1]{#2#1\vert#3}}
\newcommand{\exact}[1]{\Gamma(#1)}
\newcommand{\indexact}[1]{\ind{\exact{#1}}}
\newcommand{\indsing}[1]{\ind{\set{#1}}}
\newcommand{\ltrans}{\underline{\trans}\,}
\newcommand{\utrans}{\overline{\trans}\,}
\newcommand{\latrans}{\underline{\atrans}}
\newcommand{\process}{\mathcal{F}}
\newcommand{\predictable}{\mathcal{S}}
\newcommand{\submartin}{\mathcal{M}}
\newcommand{\supermartin}{\mathcal{M}}
\newcommand{\esupermartin}{\supermartin^*}
\newcommand{\test}[1][]{\mathcal{T}^{#1}}
\newcommand{\gambleprocess}{\mathcal{D}}
\newcommand{\summed}[1]{\mathcal{I}^{#1}}
\newcommand{\gain}[1]{\mathcal{W}[#1]}
\newcommand{\avgain}[1]{\avg{\mathcal{W}}[#1]}
\newcommand{\ergodic}[1]{\mathcal{A}[#1]}
\newcommand{\pavg}[2][\predictable]{\avg{#2}_{#1}}
\newcommand{\martins}{\mathbb{M}}
\newcommand{\submartins}{\underline{\martins}}
\newcommand{\supermartins}{\overline{\martins}}
\newcommand{\basubmartins}{\underline{\martins}_{\mathrm{b}}}
\newcommand{\bbsupermartins}{\overline{\martins}_{\mathrm{b}}}
\newcommand{\timefromto}[3][]{\tau^{#1}_{{#2}\to{#3}}}
\newcommand{\ltimefromto}[3][]{\underline{\tau}^{#1}_{{#2}\to{#3}}}
\newcommand{\utimefromto}[3][]{\overline{\tau}^{#1}_{{#2}\to{#3}}}
\newcommand{\coe}{\rho}
\newcommand{\varnorm}[2][]{\norm[#1]{#2}_{\mathrm{v}}}
\newcommand{\cset}[2]{\set{#1\colon#2}}
\newcommand{\ind}[1]{\mathbb{I}_{#1}}
\newcommand{\then}{\Rightarrow}
\newcommand{\ifandonlyif}{\Leftrightarrow}
\newcommand{\solp}{\mathfrak{M}}
\newcommand{\andstate}{\,\cdot}
\newcommand{\andpath}{\,\bullet}
\newcommand{\ltheta}{\underline{\theta}}
\newcommand{\utheta}{\overline{\theta}}
\newcommand{\pflike}{Perron--Frobenius-like}
\newcommand{\pftext}{Perron--Frobenius}
\newcommand{\pf}{\mathrm{PF}}
\newtheorem{theorem}{Theorem}
\newtheorem{proposition}[theorem]{Proposition}
\newtheorem{lemma}[theorem]{Lemma}
\newtheorem{corollary}[theorem]{Corollary}
\theoremstyle{definition}
\newtheorem{example}{Example}
\begin{document}
\title[A study of imprecise Markov chains]{A study of imprecise Markov chains:\\joint lower expectations and point-wise ergodic theorems}
\author{Gert de Cooman}
\address{Ghent University, SYSTeMS Research Group, Technologiepark--Zwijnaarde 914, 9052 Zwijnaarde, Belgium.}
\email{Gert.deCooman@UGent.be}
\author{Jasper De Bock} 
\address{Ghent University, SYSTeMS Research Group, Technologiepark--Zwijnaarde 914, 9052 Zwijnaarde, Belgium.}
\email{Jasper.DeBock@UGent.be}
\author{Stavros Lopatatzidis}
\address{Ghent University, SYSTeMS Research Group, Technologiepark--Zwijnaarde 914, 9052 Zwijnaarde, Belgium.}
\email{Stavros.Lopatatzidis@UGent.be}

\begin{abstract}
We justify and discuss expressions for joint lower and upper expectations in imprecise probability trees, in terms of the sub- and supermartingales that can be associated with such trees.
These imprecise probability trees can be seen as discrete-time stochastic processes with finite state sets and transition probabilities that are imprecise, in the sense that they are only known to belong to some convex closed set of probability measures. 
We derive various properties for their joint lower and upper expectations, and in particular a law of iterated expectations.
We then focus on the special case of imprecise Markov chains, investigate their Markov and stationarity properties, and use these, by way of an example, to derive a system of non-linear equations for lower and upper expected transition and return times.
Most importantly, we prove a game-theoretic version of the strong law of large numbers for submartingale differences in imprecise probability trees, and use this to derive point-wise ergodic theorems for imprecise Markov chains.
\end{abstract}

\keywords{Imprecise probabilities, lower expectation, point-wise ergodic theorem, imprecise Markov chain, Markov property, stationarity, law of iterated expectation, game-theoretic probability}

\maketitle

\section{Introduction}\label{sec:introduction}
In Ref.~\cite{cooman2007d}, \Citeauthor{cooman2007d} made a first attempt at laying the foundations for a theory of discrete-event (and discrete-time) stochastic processes that are governed by sets of, rather than single, probability measures.
They showed how this can be done by connecting \citeauthor{walley1991}'s \citeyearpar{walley1991} theory of coherent lower previsions with ideas and results from \citeauthor{shafer2001}'s \citeyearpar{shafer2001} game-theoretic approach to probability theory.
In later papers, \Citet{cooman2008} applied these ideas to finite-state discrete-time Markov chains, inspired by the work of \citet{hartfiel1998}. 
They showed how to perform efficient inferences in, and proved a {\pflike} theorem for, so-called imprecise Markov chains, which are finite-state discrete-time Markov chains whose transition probabilities are imprecise, in the sense that they are only known to belong to a convex closed set of probability measures---typically due to partial assessments involving probabilistic inequalities. 
This work was later refined and extended by \citet{hermans2012} and \citet{skulj2013}.
\par
The {\pflike} theorems in these papers give equivalent necessary and sufficient conditions for the uncertainty model---a set of probabilities---about the state $\state{n}$ to converge, for $n\to+\infty$, to an uncertainty model that is independent of the uncertainty model for the initial state $\state{1}$. 
\par
In Markov chains with `precise' transition probabilities, this convergence behaviour is sufficient for a point-wise ergodic theorem to hold, namely that:
\begin{equation*}
\lim_{n\to+\infty}\frac{1}{n}\sum_{k=1}^{n}f(\state{k})
=\ex_{\infty}(f)
\text{ almost surely}
\end{equation*}
for all real functions $f$ on the finite state set $\statessymbol$, where $\ex_{\infty}$ is the limit expectation operator that the expectation operators $\ex_{n}$ for the state $\state{n}$ at time $n$ converge to point-wise, independently of the initial model $\ex_{1}$ for $\state{1}$, according to the classical {\pftext} Theorem.\footnote{Actually, much more general results can be proved, for functions $f$ that do not depend on a single state only, but on the entire sequence of states; see for instance Ref.~\cite[Chapter~20]{kallenberg2002}. In this paper, we will focus on the simpler version, but we will show that it can be extended to functions on a finite number of states.}
\par
One of the aims of the present paper is to extend this result to a version for imprecise Markov chains; see Theorem~\ref{thm:point-wise:ergodic:theorem} further on.
In contradistinction with the so-called \emph{Markov set-chains} more commonly encountered in the literature~\cite{hartfiel1991,hartfiel1994,hartfiel1998}, our imprecise Markov chains are not merely collections of (precise) Markov chains---incidentally, for such Markov set-chains, proving an ergodic theorem would be a fairly trivial affair, as it would amount to applying the classical point-wise ergodic theorem to each of the Markov chains in the collection.
Rather, as we will explain in Section~\ref{sec:markov}, our imprecise Markov chains correspond to a collection of stochastic processes that need not satisfy the Markov property.
They are only `superficially Markov', in the sense that their \emph{sets} of transition probabilities satisfy a Markov condition, whereas the individual members of those sets need not.
In other words, imprecise Markov chains are \emph{not} simply collections of precise Markov chains, but rather correspond to collections of general stochastic processes whose transition models belong to sets that satisfy a Markov condition.
\par
How do we mean to go about proving our ergodicity result?
In Section~\ref{sec:ip}, we explain what we mean by imprecise probability models: we extend the notion of an expectation operator to so-called lower (and upper) expectation operators, and explain how these can be associated with (convex and closed) sets of expectation operators. 
\par
In Section~\ref{sec:processes}, we explain how these generalised uncertainty models can be combined with event trees to form so-called imprecise probability trees, to produce a simple theory of discrete-time stochastic processes.
We show in particular how to combine local uncertainty models associated with the nodes in the tree into global uncertainty models (global conditional lower expectations) about the paths in the tree, and how this procedure is related to sub- and supermartingales.
We also indicate how it extends and subsumes the (precise-)probabilistic approach. 
\par
In Section~\ref{sec:large:numbers} we prove a very general strong law of large numbers for submartingale differences in our imprecise probability trees.
Our point-wise ergodic theorem will turn out to be a consequence of this in the particular context of imprecise Markov chains.
Section~\ref{sec:properties:of:global:models} is more technical, and is devoted to extending the joint lower and upper expectations to extended real variables, and to proving a number of important properties for them, such as generalisations of well-known coherence properties, and a version of the law of iterated (lower) expectations.
\par
We explain what imprecise Markov chains are in Section~\ref{sec:markov}: how they are special cases of imprecise probability trees, how to do efficient inference for them, and how to define {\pflike} behaviour.
We generalise existing results~\cite{cooman2008} about global lower expectations in such imprecise Markov trees from a finite to an infinite time horizon, and from bounded real argument functions to extended real-valued ones.
We also explore the influence of time shifts on the global (conditional) lower expectations, investigate their Markov properties, prove various corollaries of the law of iterated lower expectations, and discuss stationarity and its relation with {\pflike} behaviour.
As an illustration of the power of our approach, we derive in Section~\ref{sec:transition:and:return:times} a system of non-linear equations for lower and upper expected transition and return times, and solve it in special case. 
\par
In Section~\ref{sec:interesting:equality} we show that there is an interesting identity between the time averages that appear in our strong law of large numbers, and the ones that appear in the point-wise ergodic theorem.
The discussion in Section~\ref{sec:perron:frobenius} first focusses on a number of terms in this identity, and investigates their convergence for {\pflike} imprecise Markov chains.
This allows us to use the identity to prove two versions of the point-wise ergodic theorem: one for functions of a single state (Theorem~\ref{thm:point-wise:ergodic:theorem}) and its extension (Corollary~\ref{cor:point-wise:ergodic:theorem}) to functions of a finite number of states.
We briefly discuss their significance in Section~\ref{sec:conclusion}. 
\par
Some of the results in this paper have already been discussed---without proofs---in an earlier conference version \cite{cooman2015:isipta:markov}. 
This paper significantly extends the earlier version.

\section{Basic notions from imprecise probabilities}\label{sec:ip}
Let us begin with a brief sketch of a few basic definitions and results about imprecise probabilities. 
For more details, we refer to \citeauthor{walley1991}'s~\cite{walley1991} seminal book, as well as more recent textbooks~\cite{augustin2013:itip,troffaes2013:lp}.

Suppose a subject is uncertain about the value that a variable $\var$ assumes in a non-empty set of possible values $\varvalues$.
He is therefore also uncertain about the value $f(\var)$ a so-called \emph{gamble}---a bounded real-valued function---$f\colon\varvalues\to\reals$ on the set $\varvalues$ assumes in $\reals$.
We will also call such an $f$ a gamble \emph{on $\var$} when we want to make explicit what variable $\var$ the gamble $f$ is intended to depend on.
The subject's uncertainty is modelled by a \emph{lower expectation}\footnote{In the literature~\cite{walley1991,augustin2013:itip,troffaes2013:lp}, other names, such as coherent lower expectation, or coherent lower prevision, have also been given to this concept.} $\lex$, which is a real functional defined on the set $\vargambles$ of all gambles on the set $\varvalues$, satisfying the following basic so-called \emph{coherence axioms}:
\begin{enumerate}[label=\upshape LE\arabic*.,ref=\upshape LE\arabic*,leftmargin=*]
\item\label{it:lex:bounds} $\lex(f)\geq\inf f$ for all $f\in\vargambles$;\hfill[bounds]
\item\label{it:lex:super} $\lex(f+g)\geq\lex(f)+\lex(g)$ for all $f,g\in\vargambles$;\hfill[superadditivity]
\item\label{it:lex:homo} $\lex(\lambda f)=\lambda\lex(f)$ for all $f\in\vargambles$ and real $\lambda\geq0$.\hfill[non-negative homogeneity]
\end{enumerate}
One---but by no means the only\footnote{See Refs.~\cite{walley1991,cooman2014:itip:previsions,troffaes2013:lp} for other interpretations.}---way to interpret $\lex(f)$ is as a lower bound on the expectation $\ex(f)$ of the gamble $f(\var)$.
The corresponding upper bounds are given by the \emph{conjugate upper expectation} $\uex$, defined by $\uex(f)\coloneqq-\lex(-f)$ for all $f\in\vargambles$. 
It follows from the coherence axioms~\ref{it:lex:bounds}--\ref{it:lex:homo} that
\begin{enumerate}[label=\upshape LE\arabic*.,ref=\upshape LE\arabic*,leftmargin=*,resume]
\item\label{it:lex:monotone} $\lex(f)\leq\lex(g)$ and $\uex(f)\leq\uex(g)$ for all $f,g\in\vargambles$ with $f\leq g$;
\item\label{it:lex:more:bounds} $\inf f\leq\lex(f)\leq\uex(f)\leq\sup f$ for all $f\in\vargambles$;
\item\label{it:lex:sub} $\uex(f+g)\leq\uex(f)+\uex(g)$ for all $f,g\in\vargambles$;\hfill[subadditivity]
\item\label{it:lex:homo:upper} $\uex(\lambda f)=\lambda\uex(f)$ for all $f\in\vargambles$ and real $\lambda\geq0$.\hfill[non-negative homogeneity]
\item\label{it:lex:constant:additivity} $\lex(f+\mu)=\lex(f)+\mu$ and $\uex(f+\mu)=\uex(f)+\mu$ for all $f\in\vargambles$ and real $\mu$.
\end{enumerate}
Lower and upper expectations will be the basic uncertainty models we consider in this paper.

The \emph{indicator} $\ind{A}$ of an \emph{event} $A$---a subset of $\varvalues$---is the gamble on $\var$ that assumes the value $1$ on $A$ and $0$ outside $A$.
It allows us to introduce the \emph{lower} and \emph{upper probabilities} of the event~$A$ as $\lpr(A)\coloneqq\lex(\ind{A})$ and $\upr(A)\coloneqq\uex(\ind{A})$, respectively.
They can be seen as lower and upper bounds on the probability $\pr(A)$ of $A$, and satisfy the conjugacy relation $\upr(A)=1-\lpr(\varvalues\setminus A)$.

When the lower bound $\lex$ coincides with the upper bound $\uex$, the resulting functional $\ex\coloneqq\lex=\uex$ satisfies the defining axioms of an \emph{expectation}:
\begin{enumerate}[label=\upshape E\arabic*.,ref=\upshape E\arabic*,leftmargin=*]
\item $\ex(f)\geq\inf f$ for all $f\in\vargambles$;\hfill[bounds]
\item $\ex(f+g)=\ex(f)+\ex(g)$ for all $f,g\in\vargambles$;\hfill[additivity]
\item $\ex(\lambda f)=\lambda\ex(f)$ for all $f\in\vargambles$ and real $\lambda$.\hfill[homogeneity]
\end{enumerate}
When $\varvalues$ is finite, $\ex$ is trivially the expectation associated with a (probability) mass function $p$ defined by $p(y)\coloneqq\lpr(\set{y})=\upr(\set{y})$ for all $y\in\varvalues$, because it follows from the expectation axioms that then $\ex(f)=\sum_{y\in\varvalues}f(y)p(y)$; see for instance also the detailed discussion in Ref.~\cite{troffaes2013:lp}.

With any lower expectation $\lex$, we can always associate the following convex and closed\footnote{The `closedness' is associated with the weak* topology of point-wise convergence~\cite[Section~3.6]{walley1991}.} set of \emph{compatible} expectations:
\begin{equation}\label{eq:linear:dominates}
\solp(\lex)\coloneqq\cset{\ex\text{ expectation}}{(\forall f\in\vargambles)\lex(f)\leq\ex(f)\leq\uex(f)},   
\end{equation}   
and the properties \ref{it:lex:bounds}--\ref{it:lex:homo} then guarantee that
\begin{equation}\label{eq:lower:envelope}
\lex(f)=\min\cset{\ex(f)}{\ex\in\solp(\lex)}
\text{ and }
\uex(f)=\max\cset{\ex(f)}{\ex\in\solp(\lex)}
\text{ for all $f\in\vargambles$}.
\end{equation}
In this sense, an imprecise probability model $\lex$ can always be identified with a closed convex set $\solp(\lex)$ of compatible `precise' probability models $\ex$.

\section{Discrete-time finite-state imprecise stochastic processes}\label{sec:processes}
We consider a discrete-time process as a sequence of variables, henceforth called \emph{states}, $\state{1}$, $\state{2}$, \dots, $\state{n}$, \dots, where the state $\state{k}$ at time $k$ is assumed to take values in a non-empty \emph{finite} set~$\states{k}$. 

\subsection{Event trees, situations, paths and cuts}
We will use, for any natural $k\leq\ell$, the notation $\statefromto{k}{\ell}$ for the tuple $(\state{k},\dots,\state{\ell})$, which can be seen as a variable assumed to take values in the Cartesian product set $\statesfromto{k}{\ell}\coloneqq\times_{r=k}^\ell\states{r}$.
We denote the set of all natural numbers (without $0$) by $\nats$, and let $\natswithzero\coloneqq\nats\cup\set{0}$.

We call any $\xvalto{n}\in\statesto{n}$ for $n\in\natswithzero$ a \emph{situation} and we denote the set of all situations by $\sits$. 
So any situation is a finite string of possible values for the consecutive states, and if we denote the empty string by $\init$, then in particular, $\statesto{0}=\set{\init}$.
$\init$ is called the \emph{initial situation}. 
We also use the generic notations $s$, $t$ or $u$ for situations.

An infinite sequence of state values is called a \emph{path}, and we denote the set of all paths---also called the \emph{sample space}---by $\pths$.
Hence
\begin{equation*}
\sits\coloneqq\bigcup_{n\in\natswithzero}\statesto{n}
\text{ and }
\pths\coloneqq\times_{r=1}^\infty\states{k}.
\end{equation*}
We will denote generic paths by $\pth$. 
For any path $\pth\in\pths$, the initial sequence that consists of its first $n$ elements is a situation in $\statesto{n}$ that is denoted by $\pth^{n}$. 
Its $n$-th element belongs to $\states{n}$ and is denoted by $\pth_n$.
As a convention, we let its $0$-th element be the initial situation $\pth^0=\pth_0=\init$. 
The possible realisations $\pth$ of a process can be represented graphically as paths in a so-called \emph{event tree}, where each node is a situation; see Figure~\ref{fig:eventtree}. 

\begin{figure}[ht]
\centering\footnotesize
\begin{tikzpicture}
\tikzstyle{level 1}=[sibling distance=20em]
\tikzstyle{level 2}=[sibling distance=10em]
\tikzstyle{level 3}=[sibling distance=5em]
\tikzstyle{level 4}=[level distance=2em]
\node[root] (root) {} [grow=down,level distance=8ex]
child {node[nonterminal] (a) {$a\vphantom{)}$}
  child {node[nonterminal] (aa) {$(a,a)$}
    child {node[nonterminal] (aaa) {$(a,a,a)$}
      child[black,dotted]} 
    child {node[nonterminal] (aab) {$(a,a,b)$}
      child[black,dotted]}
  }
  child {node[nonterminal] (ab) {$(a,b)$}
    child {node[nonterminal] (aba) {$(a,b,a)$}
      child[black,dotted]}
    child {node[nonterminal] (abb) {$(a,b,b)$}
      child[black,dotted]}
  }
}
child {node[nonterminal] (b) {$b\vphantom{)}$}
  child {node[nonterminal] (ba) {$(b,a)$}
    child {node[nonterminal] (baa) {$(b,a,a)$}
      child[black,dotted]}
    child {node[nonterminal] (bab) {$(b,a,b)$}
      child[black,dotted]}
  }
  child {node[nonterminal] (bb) {$(b,b)$}
    child {node[nonterminal] (bba) {$(b,b,a)$}
      child[black,dotted]}
    child {node[nonterminal] (bbb) {$(b,b,b)$}
      child[black,dotted]}
  }
};
\draw[cut] (b) -- +(1,0);
\draw[cut] (b) -- (a) -- +(-2,0) node[left,local] {$\statesto{1}$};
\draw[cut] (bb) -- +(1,0);
\draw[cut] (bb) -- (ba) -- (ab) -- (aa) -- +(-1,0) node[left,local] {$\statesto{2}$};
\draw[cut] (bbb) -- +(1,0);
\draw[cut] (bbb) -- (bba) -- (bab) -- (baa) -- (abb) -- (aba) -- (aab) -- (aaa) -- +(-1,0) node[left,local] {$\statesto{3}$};
\end{tikzpicture}
\caption{
    The (initial part of the) event tree for a process whose states can assume two values, $a$ and~$b$, and can change at time instants $n=1,2,3,\dots$\ 
    Each node in the tree corresponds to a situation.
    Also depicted are the respective sets of situations (cuts)~$\statesto{1}$, $\statesto{2}$ and $\statesto{3}$ where the states at times~$1$, $2$ and~$3$ are revealed.}
\label{fig:eventtree}
\end{figure}
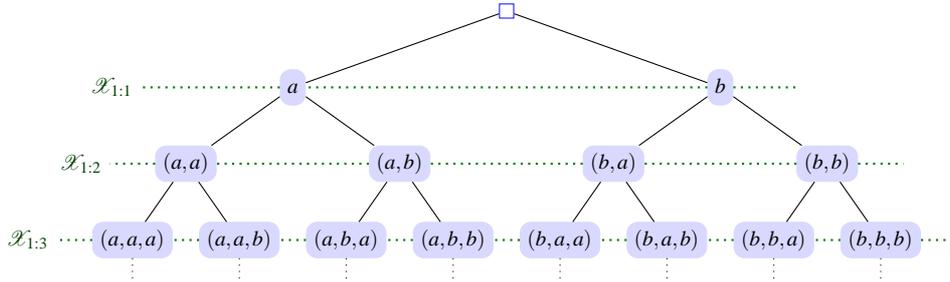

We write that $s\precedes t$, and say that \emph{$s$ precedes $t$} or that \emph{$t$ follows $s$}, when every path that goes through $t$ also goes through $s$.
The binary relation $\precedes$ is a partial order, and we write $s\sprecedes t$ whenever $s\precedes t$ but not $s=t$.
We say that $s$ and $t$ are \emph{incomparable} when neither $s\precedes t$ nor $t\precedes s$.

A (partial) \emph{cut} $\ucut$ is a collection of mutually incomparable situations, and represents a stopping time.
For any two cuts $\ucut$ and $\vcut$, we define the following sets of situations:
\begin{align*}
\sqgroup{\ucut,\vcut}
\coloneqq&\cset{s\in\sits}{(\exists u\in\ucut)(\exists v\in\vcut)u\precedes s\precedes v}\\
\lsqgroup{\ucut,\vcut}
\coloneqq&\cset{s\in\sits}{(\exists u\in\ucut)(\exists v\in\vcut)u\precedes s\sprecedes v}\\
\usqgroup{\ucut,\vcut}
\coloneqq&\cset{s\in\sits}{(\exists u\in\ucut)(\exists v\in\vcut)u\sprecedes s\precedes v}\\
\group{\ucut,\vcut}
\coloneqq&\cset{s\in\sits}{(\exists u\in\ucut)(\exists v\in\vcut)u\sprecedes s\sprecedes v}.
\end{align*}
When a cut $\ucut$ consists of a single element $u$, then we will identify $\ucut=\set{u}$ and $u$.
This slight abuse of notation will for instance allow us to write $\sqgroup{u,v}=\cset{s\in\sits}{u\precedes s\precedes v}$ and also $\group{\ucut,v}=\cset{s\in\sits}{(\exists u\in\ucut)u\sprecedes s\sprecedes v}$.
We also write $\ucut\sprecedes\vcut$ if $(\forall v\in\vcut)(\exists u\in\ucut)u\sprecedes v$. 
Observe that in that case $\ucut\cap\vcut=\emptyset$.
In particular, $s\sfollows\ucut$ when there is some $u\in\ucut$ such that $s\sfollows u$, or in other words if $\lsqgroup{\ucut,s}\neq\emptyset$.

\subsection{Processes}
A \emph{process} $\process$ is a map defined on $\sits$. 
A \emph{real process} is a real-valued process: it associates a real number $\process(\xvalto{n})\in\reals$ with any situation $\xvalto{n}$.
It is called \emph{bounded below} if there is some real $B$ such that $\process(s)\geq B$ for all situations $s\in\sits$, and \emph{bounded above} if $-\process$ is bounded below.

A \emph{gamble process} $\gambleprocess$ is a process that associates with any situation $\xvalto{n}$ a gamble $\gambleprocess(\xvalto{n})\in\gambleson{n+1}$ on $\state{n+1}$.
It is called \emph{uniformly bounded} if there is some real $B$ such that $\abs{\gambleprocess(s)}\leq B$ for all situations $s\in\sits$.
With any real process $\process$, we can always associate a gamble process $\Delta\process$, called the \emph{process difference}. For every situation $\xvalto{n}$, the gamble $\Delta\process(\xvalto{n})\in\gambleson{n+1}$ is defined by\footnote{Our assumption that $\states{n+1}$ is finite is crucial here because it guarantees that $\Delta\process(\xvalto{n})$ is bounded, which in turn implies that it is indeed a gamble.}
\begin{equation*}
\Delta\process(\xvalto{n})(\xval{n+1})
\coloneqq\process(\xvalto{n+1}
)
-\process(\xvalto{n})
\text{ for all $\xval{n+1}\in\states{n+1}$}.
\end{equation*}
We will denote this more succinctly by $\Delta\process(\xvalto{n})=\process(\xvalto{n}\andstate)-\process(\xvalto{n})$, where the `$\cdot$' represents the generic value of the next state $\state{n+1}$.

Conversely, with a gamble process $\gambleprocess$, we can associate a real process $\summed{\gambleprocess}$, defined by  
\begin{equation*}
\summed{\gambleprocess}(\xvalto{n})
\coloneqq\sum_{k=0}^{n-1}\gambleprocess(\xvalto{k})(\xval{k+1})
\quad\text{ for all $n\in\natswithzero$ and $\xvalto{n}\in\statesto{n}$}.
\end{equation*}
Clearly, $\Delta\summed{\gambleprocess}=\gambleprocess$ and $\process=\process(\init)+\summed{\Delta\process}$.

Also, with any real process $\process$ we can associate the \emph{path-averaged process} $\avg{\process}$, which is the real process defined by: 
\begin{equation*}
\avg{\process}(\xvalto{n})
\coloneqq
\begin{cases}
0&\text{if $n=0$}\\
\dfrac{\sum_{k=0}^{n-1}\Delta\process(\xvalto{k})(\xval{k+1})}{n}
&\text{if $n>0$}
\end{cases}
\quad\text{ for all $n\in\natswithzero$ and $\xvalto{n}\in\statesto{n}$}.
\end{equation*}
We can generalise this notion of path-averaging even further as follows.
Consider any real process $\predictable$ that only assumes values in $\{0,1\}$.
Then the \emph{$\predictable$-averaged process} $\pavg{\process}$ is the real process defined by: 
\begin{multline*}
\pavg{\process}(\xvalto{n})
\coloneqq
\begin{cases}
0&\text{if $\sum_{k=0}^{n-1}\predictable(\xvalto{k})=0$}\\
\dfrac{\sum_{k=0}^{n-1}\predictable(\xvalto{k})\Delta\process(\xvalto{k})(\xval{k+1})}{\sum_{k=0}^{n-1}\predictable(\xvalto{k})}
&\text{if $\sum_{k=0}^{n-1}\predictable(\xvalto{k})>0$}
\end{cases}\\
\quad\text{ for all $n\in\natswithzero$ and $\xvalto{n}\in\statesto{n}$}.
\end{multline*}
Of course, if $\predictable$ is identically equal to $1$ in all situations, then $\pavg{\process}=\avg{\process}$.
In order to unburden our formulas somewhat, we will permit ourselves the slight abuse of notation $\summed{\predictable}(\xvalto{n})\coloneqq\sum_{k=0}^{n-1}\predictable(\xvalto{k})$ for all $n\in\natswithzero$ and $\xvalto{n}\in\statesto{n}$.

\subsection{Imprecise probability trees, submartingales and supermartingales}
\label{sec:basic:iptrees}
The standard way to turn an event tree into a \emph{probability tree} is to attach to each of its nodes, or situations $\xvalto{n}$, a \emph{local} probability model $\xtoinlocal{\cdot}{n}$ for what will happen immediately afterwards, i.e.~for the value that the next state $\state{n+1}$ will assume in $\states{n+1}$.
This local model $\xtoinlocal{\cdot}{n}$ is then an expectation operator on the set $\gambleson{n+1}$ of all gambles $g(\state{n+1})$ on the next state $\state{n+1}$, conditional on observing $\stateto{n}=\xvalto{n}$.

In a completely similar way, we can turn an event tree into an \emph{imprecise probability tree} by attaching to each of its situations $\xvalto{n}$ a local \emph{imprecise} probability model $\xtoinllocal{\cdot}{n}$ for what will happen immediately afterwards, i.e.~for the value that the next state $\state{n+1}$ will assume in $\states{n+1}$.
This local model $\xtoinllocal{\cdot}{n}$ is then a \emph{lower} expectation operator on the set $\gambleson{n+1}$ of all gambles $g(\state{n+1})$ on the next state $\state{n+1}$, conditional on observing $\stateto{n}=\xvalto{n}$.
This is represented graphically in Figure~\ref{fig:iptree}.

\begin{figure}[ht]
\centering\footnotesize
\begin{tikzpicture}
\tikzstyle{level 1}=[sibling distance=20em]
\tikzstyle{level 2}=[sibling distance=10em]
\tikzstyle{level 3}=[sibling distance=5em]
\tikzstyle{level 4}=[level distance=2em]
\node[root] (root) {} [grow=down,level distance=8ex]
child {node[nonterminal] (a) {$a\vphantom{)}$}
  child {node[nonterminal] (aa) {$(a,a)$}
    child {node[nonterminal] (aaa) {$(a,a,a)$}
      child[black,dotted]}
    child {node[nonterminal] (aab) {$(a,a,b)$}
      child[black,dotted]}
  }
  child {node[nonterminal] (ab) {$(a,b)$}
    child {node[nonterminal] (aba) {$(a,b,a)$}
      child[black,dotted]}
    child {node[nonterminal] (abb) {$(a,b,b)$}
      child[black,dotted]}
  }
}
child {node[nonterminal] (b) {$b\vphantom{)}$}
  child {node[nonterminal] (ba) {$(b,a)$}
    child {node[nonterminal] (baa) {$(b,a,a)$}
      child[black,dotted]}
    child {node[nonterminal] (bab) {$(b,a,b)$}
      child[black,dotted]}
  }
  child {node[nonterminal] (bb) {$(b,b)$}
    child {node[nonterminal] (bba) {$(b,b,a)$}
      child[black,dotted]}
    child {node[nonterminal] (bbb) {$(b,b,b)$}
      child[black,dotted]}
  }
};
\draw[local,thick] (root) +(190:1.5em) arc (190:350:1.5em);
\draw[local,thick] (b) +(210:2em) arc (210:330:2em);
\draw[local,thick] (a) +(210:2em) arc (210:330:2em);
\draw[local,thick] (bb) +(230:2.25em) arc (230:310:2.25em);
\draw[local,thick] (ba) +(230:2.25em) arc (230:310:2.25em);
\draw[local,thick] (ab) +(230:2.25em) arc (230:310:2.25em);
\draw[local,thick] (aa) +(230:2.25em) arc (230:310:2.25em);
\path (root) +(275:2.35em) node[local] {$\lmargin{\cdot}$};
\path (a) +(270:2.95em) node[local] {$\ltransition{\cdot}{a}$};
\path (b) +(270:2.95em) node[local] {$\ltransition{\cdot}{b}$};
\path (aa) +(300:2.8em) node[local,above right] {$\ltransition{\cdot}{a,a}$};
\path (bb) +(300:2.8em) node[local,above right] {$\ltransition{\cdot}{b,b}$};
\path (ba) +(300:2.8em) node[local,above right] {$\ltransition{\cdot}{b,a}$};
\path (ab) +(300:2.8em) node[local,above right] {$\ltransition{\cdot}{a,b}$};
\end{tikzpicture}
\caption{The (initial part of the) imprecise probability tree for a process whose states can assume two values, $a$ and~$b$, and can change at time instants $n=1,2,3,\dots$}
\label{fig:iptree}
\end{figure}
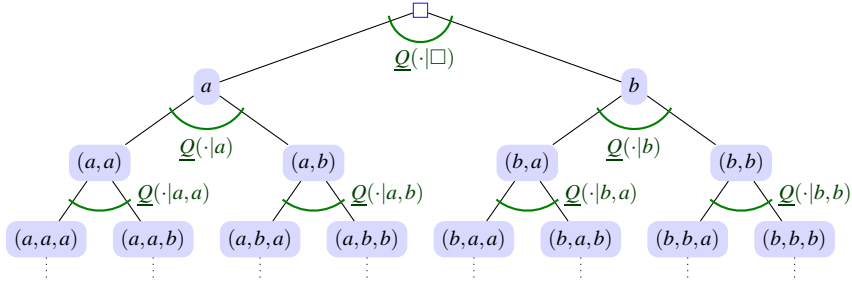

In a \emph{given} imprecise probability tree, a \emph{submartingale} $\submartin$ is a real process such that $\xtoinllocal{\Delta\submartin(\xvalto{n})}{n}\geq0$ for all $n\in\natswithzero$ and $\xvalto{n}\in\statesto{n}$: all submartingale differences have non-negative lower expectation.
A real process $\supermartin$ is a \emph{supermartingale} if $-\supermartin$ is a submartingale, meaning that $\xtoinulocal{\Delta\submartin(\xvalto{n})}{n}\leq0$ for all $n\in\natswithzero$ and $\xvalto{n}\in\statesto{n}$: all supermartingale differences have non-positive upper expectation.
We denote the set of all submartingales for a given imprecise probability tree by $\submartins$---whether a real process is a submartingale depends of course on the local uncertainty models.
The set of all submartingales that are bounded above is denoted by $\basubmartins$.
Similarly, the set $\supermartins\coloneqq-\submartins$ is the set of all supermartingales, and $\bbsupermartins\coloneqq-\basubmartins$ the set of all supermartingales that are bounded below.

In the present context of probability trees, we will also call \emph{variable} any function defined on the so-called \emph{sample space}---the set $\pths$ of all paths.
When this variable is real-valued and bounded, we will also call it a \emph{gamble} on $\pths$.
When it is extended real-valued, meaning that it assumes values in the set $\xreals\coloneqq\reals\cup\set{-\infty,+\infty}$, we call in an \emph{extended real variable}.
An \emph{event} $A$ in this context is a subset of $\pths$, and its indicator $\ind{A}$ is a gamble on $\pths$ assuming the value $1$ on $A$ and $0$ elsewhere.
With any situation $\xvalto{n}$, we can associate the so-called \emph{exact event} $\exact{\xvalto{n}}$ that $\stateto{n}=\xvalto{n}$, which is the set of all paths $\pth\in\pths$ that go through $\xvalto{n}$:
\begin{equation*}
\exact{\xvalto{n}}\coloneqq\cset{\pth\in\pths}{\pth^{n}=\xvalto{n}}.
\end{equation*}
We will also use the generic notation `$\xvalto{n}\andpath$' for all such paths in $\exact{\xvalto{n}}$.
For a given $n\in\natswithzero$, we call a variable $\xi$ \emph{$n$-measurable} if it is constant on the exact events $\exact{\xvalto{n}}$ for all $\xvalto{n}\in\statesto{n}$, or in other words, if it only depends on the values of the first $n$ states $\stateto{n}$.
We then use the obvious notation $\xi(\xvalto{n})$ for its constant value $\xi(\pth)$ on all paths $\pth$ in $\exact{\xvalto{n}}$.
In particular, this means that we can---and will---identify an $n$-measurable gamble $g$ on $\pths$ with a gamble on $\statesto{n}$, and write that $g\in\gamblesto{n}$.

We will also use a convenient notational device often encountered in texts on stochastic processes: when we want to indicate which states a process or variable depends on, we indicate them explicitly in the notation.
Thus, we use for instance the notation $\process(\stateto{n})$ to indicate the `uncertain' value of the process $\process$ after the first $n$ time steps, write $f(\state{n})$ for a variable that only depends on the value of the $n$-th state, and similarly $g(\stateto{n})$ for a variable that only depends on the values of the first $n$ states.

With a real process $\process$, we can associate in particular the following extended real variables $\liminf\process$ and $\limsup\process$, defined for all $\pth\in\pths$ by:
\begin{equation*}
\liminf\process(\pth)
\coloneqq\liminf_{n\to\infty}\process(\pth^{n})
\text{ and }
\limsup\process(\pth)
\coloneqq\limsup_{n\to\infty}\process(\pth^{n}).
\end{equation*}
If $\liminf\process(\pth)=\limsup\process(\pth)$ on some path $\pth$, then we also denote the common value there by $\lim\process(\pth)=\lim_{n\to\infty}\process(\pth^{n})$. 

The following useful result is a variation on a result proved in Ref.~\cite[Lemma~1]{shafer2012:zero-one}, and is similar in spirit to a result proved earlier in Ref.~\cite[Lemma~2]{cooman2007d}.

\begin{lemma}\label{lem:very:basic:inequality}
Consider any submartingale $\submartin$ and any situation $s\in\sits$, then:
\begin{equation*}
\submartin(s)
\leq\sup_{\pth\in\exact{s}}\liminf\submartin(\pth)
\leq\sup_{\pth\in\exact{s}}\limsup\submartin(\pth).  
\end{equation*}  
\end{lemma}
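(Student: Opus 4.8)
The plan is to reduce everything to the first inequality, since the second is immediate: for every path $\pth$ one has $\liminf\submartin(\pth)\leq\limsup\submartin(\pth)$, and taking the supremum over $\pth\in\exact{s}$ on both sides preserves the inequality. For the first inequality my strategy is to exhibit a single path $\pth\in\exact{s}$ along which $\submartin$ never falls below its value $\submartin(s)$ at $s$. Once such a path is in hand, $\liminf\submartin(\pth)\geq\submartin(s)$, and hence $\sup_{\pth\in\exact{s}}\liminf\submartin(\pth)\geq\liminf\submartin(\pth)\geq\submartin(s)$, which is exactly what is wanted.

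The engine of the argument is a purely local observation at each node. Fix a situation $\xvalto{n}$. Because $\submartin$ is a submartingale, $\xtoinllocal{\Delta\submartin(\xvalto{n})}{n}\geq0$; combining this with the bound $\xtoinllocal{\Delta\submartin(\xvalto{n})}{n}\leq\sup\Delta\submartin(\xvalto{n})$ supplied by~\ref{it:lex:more:bounds}, applied to the gamble $\Delta\submartin(\xvalto{n})$ on the next state $\state{n+1}$, I get $\sup\Delta\submartin(\xvalto{n})\geq0$. The decisive point is that $\states{n+1}$ is finite, so this supremum is attained: there is at least one $\xval{n+1}\in\states{n+1}$ with $\Delta\submartin(\xvalto{n})(\xval{n+1})\geq0$, that is, a child situation $\xvalto{n+1}$ at which $\submartin(\xvalto{n+1})\geq\submartin(\xvalto{n})$. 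In words: from every situation the submartingale has an immediate successor at which its value does not decrease.

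With this local fact in place, I would build the desired path greedily. Writing $s=\xvalto{m}$ and starting from $s$, I repeatedly invoke the observation: given the situation reached after $n\geq m$ steps, I extend it by a state value at which $\submartin$ does not decrease. This yields a path $\pth\in\exact{s}$ with $\submartin(\pth^{n+1})\geq\submartin(\pth^{n})$ for all $n\geq m$, so the sequence $(\submartin(\pth^{n}))_{n\geq m}$ is non-decreasing and therefore $\submartin(\pth^{n})\geq\submartin(\pth^{m})=\submartin(s)$ throughout. Passing to the limit inferior gives $\liminf\submartin(\pth)=\liminf_{n\to\infty}\submartin(\pth^{n})\geq\submartin(s)$, completing the argument.

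I do not anticipate a genuine obstacle, as the proof is essentially a greedy path construction. The two points that deserve care are, first, to invoke the finiteness of the state sets explicitly---without it the supremum need not be attained and the greedy extension could stall---and second, to note that a non-decreasing real sequence has limit inferior at least its first term irrespective of whether $\submartin$ is bounded, the limit possibly being $+\infty$, which is harmless since only a lower bound is required.
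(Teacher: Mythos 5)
Your proof is correct and follows essentially the same route as the paper's: both exploit the submartingale property together with the bound $\xtoinllocal{\cdot}{n}\leq\sup(\cdot)$ and the finiteness of the state sets to greedily extend $s$ to a path along which $\submartin$ stays large. The only cosmetic difference is that the paper fixes a threshold $\alpha<\submartin(s)$ and keeps the process above $\alpha$, whereas you build a path along which $\submartin$ is actually non-decreasing; both yield $\liminf\submartin(\pth)\geq\submartin(s)$.
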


\begin{proof}
Consider any real $\alpha$, and assume that $\submartin(s)>\alpha$.
Assume that $s=\xvalto{n}$ with $n\in\natswithzero$.
Since $\submartin$ is a submartingale, we know that $\xtoinllocal{\submartin(\xvalto{n}\andstate)-\submartin(\xvalto{n})}{n}\geq0$, and therefore, by coherence [\ref{it:lex:more:bounds} and~\ref{it:lex:constant:additivity}] and the assumption, that
\begin{equation*}
\max\submartin(\xvalto{n}\andstate)\geq\xtoinllocal{\submartin(\xvalto{n}\andstate)}{n}\geq\submartin(\xvalto{n})>\alpha,
\end{equation*}
implying that there is some $\xval{n+1}\in\states{n+1}$ such that $\submartin(\xvalto{n+1})>\alpha$.
Repeating the same argument over and over again, this leads to the conclusion that there is some $\pth\in\exact{\xvalto{n}}$ such that $\submartin(\pth^{n+k})>\alpha$ for all $k\in\natswithzero$, whence $\liminf\submartin(\pth)\geq\alpha$, and therefore also $\sup_{\pth\in\exact{\xvalto{n}}}\liminf\submartin(\pth)\geq\alpha$.
The rest of the proof is now immediate.
\end{proof}

\subsection{Going from local to global belief models}\label{sec:global:models}
So far, we have associated local uncertainty models with an imprecise probability tree.
These represent, in any situation $\xvalto{n}$, beliefs about what will happen immediately afterwards, or in other words about the step from $\xvalto{n}$ to $\xvalto{n}\,\state{n+1}$.

We now want to turn these local models into global ones: uncertainty models about which entire path $\pth$ is taken in the event tree, rather than which local steps are taken from one situation to the next.
We will use the following expression for the global lower expectation conditional on the situation $s$:
\begin{equation}\label{eq:global:lower:gambles}
\sitinlglobal{g}{s}
\coloneqq\sup\cset{\submartin(s)}
{\submartin\in\submartins\text{ and }\limsup\submartin(\pth)\leq g(\pth)\text{ for all $\pth\in\exact{s}$}},
\end{equation}
and for the conjugate global upper expectation conditional on the situation $s$:
\begin{align}
\sitinuglobal{g}{s}
\coloneqq&\inf\cset{\supermartin(s)}
{\supermartin\in\supermartins\text{ and }\liminf\supermartin(\pth)\geq g(\pth)\text{ for all $\pth\in\exact{s}$}}
\label{eq:global:upper:gambles}\\
=&-\sitinlglobal{-g}{s},\notag
\end{align}
where $g$ is any gamble on $\pths$, and $s\in\sits$ any situation.
We use the simplified notations $\ljoint=\sitinlglobal{\cdot}{\init}$ and $\ujoint=\sitinuglobal{\cdot}{\init}$ for the (unconditional) global models, associated with the initial situation $\init$.

Our reasons for using these so-called \emph{Shafer--Vovk--Ville formulae}\footnote{We give this name to these formulae because Glenn Shafer and Vladimir Vovk first suggested them, based on the ideas of Jean Ville; see the discussion of Ville's Theorem in Ref.~\cite[Appendix~8.5]{shafer2001}.} are fourfold.

First of all, they are formally very closely related to the expressions for lower and upper prices in Shafer and Vovk's game-theoretic approach to probabilities, see for instance Refs.~\cite[Chapter~8.3]{shafer2001} and~\cite[Section 6.3]{vovk2014:itip}.
This allows us to import and adapt, with the necessary care, quite a number of powerful convergence results from that theory, as we will see in Section~\ref{sec:large:numbers}.
Moreover, Shafer and Vovk (see for instance Refs.~\cite[Proposition~8.8]{shafer2001} and~\cite[Section~6.3]{vovk2014:itip}) have shown that they satisfy our defining properties for lower and upper expectations in Section~\ref{sec:ip}, which is why we are calling them lower and upper expectations; see also Proposition~\ref{prop:properties:of:global:expectations} further on.

Secondly, as we gather from the following proposition and corollary, the expressions~\eqref{eq:global:lower:gambles} and~\eqref{eq:global:upper:gambles} coincide for $n$-measurable gambles on $\pths$ with the formulae derived in Ref.~\cite{cooman2007d} as the most conservative\footnote{By more conservative, we mean associated with a larger set of precise models, so point-wise smaller for lower expectations, and point-wise larger for upper expectations.} global lower and upper expectations that extend the local models.\footnote{We have also shown in recent, still unpublished work that in a more general context---where $\state{k}$ takes values in a possibly \emph{infinite} set $\states{k}$---for arbitrary gambles on $\pths$ they are the most conservative global models that extend the local ones and satisfy additional conglomerability and continuity properties.}

\begin{proposition}\label{prop:finite:horizon:gambles}
For any situation $\xvalto{m}\in\sits$ and any $n$-measurable gamble~$g$ on $\pths$, with $n,m\in\natswithzero$ such that $n\geq m$:
\begin{align*}
\xtoinlglobal{g}{m}
&=\sup\cset{\submartin(\xvalto{m})}
{\submartin\in\submartins
\text{ and }
(\forall\xvalfromto{m+1}{n}\in\statesfromto{m+1}{n})\submartin(\xvalto{n})\leq g(\xvalto{n})}\\
\xtoinuglobal{g}{m}
&=\inf\cset{\supermartin(\xvalto{m})}
{\supermartin\in\supermartins
\text{ and }
(\forall\xvalfromto{m+1}{n}\in\statesfromto{m+1}{n})\supermartin(\xvalto{n})\geq g(\xvalto{n})}.
\end{align*}
\end{proposition}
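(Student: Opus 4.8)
The plan is to prove the identity for the global lower expectation $\xtoinlglobal{g}{m}$; the companion formula for $\xtoinuglobal{g}{m}$ then follows immediately by conjugacy, applying the lower result to the $n$-measurable gamble $-g$ and using $\supermartins=-\submartins$ together with $\xtoinuglobal{g}{m}=-\xtoinlglobal{-g}{m}$. Write $L$ for the left-hand side, defined by~\eqref{eq:global:lower:gambles} with the infinite-horizon constraint $\limsup\submartin(\pth)\leq g(\pth)$ over all $\pth\in\exact{\xvalto{m}}$, and $R$ for the right-hand side, the supremum of $\submartin(\xvalto{m})$ over submartingales subject only to the level-$n$ constraint $\submartin(\xvalto{n})\leq g(\xvalto{n})$. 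I would establish $L=R$ by proving the two inequalities separately.

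For $L\leq R$, the point is that the two feasible sets are nested: every submartingale admissible in~\eqref{eq:global:lower:gambles} already satisfies the level-$n$ constraint. Fix any extension $\xvalfromto{m+1}{n}$ of $\xvalto{m}$, so that $\exact{\xvalto{n}}\subseteq\exact{\xvalto{m}}$, and apply Lemma~\ref{lem:very:basic:inequality} at the situation $\xvalto{n}$ to get $\submartin(\xvalto{n})\leq\sup_{\pth\in\exact{\xvalto{n}}}\limsup\submartin(\pth)$. For each such $\pth$ the admissibility constraint gives $\limsup\submartin(\pth)\leq g(\pth)$, and since $g$ is $n$-measurable we have $g(\pth)=g(\xvalto{n})$; hence $\submartin(\xvalto{n})\leq g(\xvalto{n})$. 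Thus any $\submartin$ feasible for $L$ is also feasible for $R$, and taking suprema yields $L\leq R$.

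For $R\leq L$, I would use a freezing construction that turns any $R$-feasible submartingale into an $L$-feasible one without changing its value at $\xvalto{m}$. Given a submartingale $\submartin$ with $\submartin(\xvalto{n})\leq g(\xvalto{n})$ for all $\xvalfromto{m+1}{n}\in\statesfromto{m+1}{n}$, set $\submartin'(\xvalto{k})\coloneqq\submartin(\xvalto{\min\set{k,n}})$, i.e.\ freeze $\submartin$ from time $n$ onwards. One checks that $\submartin'$ is again a submartingale: below time $n$ its differences coincide with those of $\submartin$, while from time $n$ onwards $\Delta\submartin'=0$, whose lower expectation is $0\geq0$. Moreover $\limsup\submartin'(\pth)=\submartin(\pth^{n})\leq g(\pth^{n})=g(\pth)$ for every $\pth\in\exact{\xvalto{m}}$, so $\submartin'$ is admissible in~\eqref{eq:global:lower:gambles}, with $\submartin'(\xvalto{m})=\submartin(\xvalto{m})$. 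Hence every term in the supremum defining $R$ is bounded by $L$, giving $R\leq L$.

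The substance lies entirely in the $L\leq R$ direction, and the main (small) obstacle is recognising that the infinite-horizon $\limsup$ constraint collapses to a purely finite-horizon condition at level $n$ precisely because $g$ is $n$-measurable---this is exactly the content supplied by Lemma~\ref{lem:very:basic:inequality}, applied not at $\xvalto{m}$ but at each deeper situation $\xvalto{n}$. The reverse inequality is routine once one observes that freezing a submartingale after a fixed time preserves the submartingale property and converts the level-$n$ bound into the required $\limsup$ bound.
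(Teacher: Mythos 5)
Your proof is correct and follows essentially the same route as the paper's: the paper derives this proposition from Proposition~\ref{prop:finite:horizon}, whose proof uses exactly your two ingredients---Lemma~\ref{lem:very:basic:inequality} applied at each $\xvalto{n}$ together with $n$-measurability to collapse the $\limsup$ constraint to a level-$n$ constraint, and the freezing construction at time $n$ for the converse inequality. The only cosmetic difference is that the paper works with bounded-above submartingales (as required by the extended-real definition) and therefore also freezes the process off the cone of $\xvalto{m}$, a step you can omit since Equation~\eqref{eq:global:lower:gambles} quantifies over all of $\submartins$.
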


\begin{proof}
This is an immediate consequence of Proposition~\ref{prop:finite:horizon} in Section~\ref{sec:properties:of:global:models}.
\end{proof}

\begin{corollary}\label{cor:global:and:local:gambles}
Consider any $n\in\natswithzero$, any situation $\xvalto{n}\in\sits$, and any $(n+1)$-measurable gamble $g$ on $\pths$. Then
\begin{equation*}
\xtoinlglobal{g}{n}=\xtoinllocal{g(\xvalto{n}\andstate)}{n}
\text{ and }
\xtoinuglobal{g}{n}=\xtoinulocal{g(\xvalto{n}\andstate)}{n}.
\end{equation*}
\end{corollary}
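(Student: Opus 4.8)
The plan is to read off a finite-horizon representation of $\xtoinlglobal{g}{n}$ from Proposition~\ref{prop:finite:horizon:gambles} and then squeeze it against the local model from both sides. Since $g$ is $(n+1)$-measurable and $n+1\geq n$, I apply that proposition with the situation $\xvalto{m}$ taken to be $\xvalto{n}$ and the horizon taken to be $n+1$, obtaining $\xtoinlglobal{g}{n}=\sup\cset{\submartin(\xvalto{n})}{\submartin\in\submartins\text{ and }(\forall\xval{n+1}\in\states{n+1})\submartin(\xvalto{n+1})\leq g(\xvalto{n+1})}$. Because $g$ is $(n+1)$-measurable, the map $\xval{n+1}\mapsto g(\xvalto{n+1})$ is exactly the gamble $g(\xvalto{n}\andstate)$ on the next state $\state{n+1}$, so the constraint reads $\submartin(\xvalto{n}\andstate)\leq g(\xvalto{n}\andstate)$.

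For the inequality $\xtoinlglobal{g}{n}\leq\xtoinllocal{g(\xvalto{n}\andstate)}{n}$, I take any admissible $\submartin$. Its submartingale property at $\xvalto{n}$ says $\xtoinllocal{\Delta\submartin(\xvalto{n})}{n}\geq0$, and since $\submartin(\xvalto{n})$ is a constant, constant additivity~\ref{it:lex:constant:additivity} rewrites this as $\submartin(\xvalto{n})\leq\xtoinllocal{\submartin(\xvalto{n}\andstate)}{n}$. Monotonicity~\ref{it:lex:monotone} together with the constraint $\submartin(\xvalto{n}\andstate)\leq g(\xvalto{n}\andstate)$ then gives $\submartin(\xvalto{n})\leq\xtoinllocal{g(\xvalto{n}\andstate)}{n}$; taking the supremum over all admissible $\submartin$ yields the bound.

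For the reverse inequality I exhibit an explicit witness. Put $c\coloneqq\xtoinllocal{g(\xvalto{n}\andstate)}{n}$ and define a real process $\submartin^*$ by $\submartin^*(s)\coloneqq c$ for every situation $s$ that does not strictly follow $\xvalto{n}$, and $\submartin^*(s)\coloneqq g(\xvalto{n+1})$ for every $s$ that strictly follows $\xvalto{n}$, where $\xvalto{n+1}$ is the child of $\xvalto{n}$ through which $s$ passes. I claim $\submartin^*\in\submartins$, checked situation by situation: at $\xvalto{n}$ the process difference is the gamble $g(\xvalto{n}\andstate)-c$, whose local lower expectation is $c-c=0$ by~\ref{it:lex:constant:additivity}; at any $s$ strictly following $\xvalto{n}$ the process is constantly $g(\xvalto{n+1})$ on all descendants, so the difference vanishes; and at any $t$ that precedes $\xvalto{n}$ or is incomparable with it, every child of $t$ again fails to strictly follow $\xvalto{n}$, so $\submartin^*$ is locally constant there and its difference vanishes. (Here I use that situations are prefixes of paths, hence the predecessors of any fixed situation are totally ordered, which prevents a descendant of an incomparable $t$ from passing through $\xvalto{n}$.) In every case the lower expectation of the difference is $0\geq0$, and $\submartin^*$ is bounded, so $\submartin^*\in\basubmartins\subseteq\submartins$; it meets the constraint with equality, whence $\xtoinlglobal{g}{n}\geq\submartin^*(\xvalto{n})=c$. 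This proves the lower equality.

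Finally, the upper identity follows by conjugacy: applying the lower equality to the $(n+1)$-measurable gamble $-g$ and combining $\xtoinuglobal{g}{n}=-\xtoinlglobal{-g}{n}$ from~\eqref{eq:global:upper:gambles} with $\xtoinulocal{g(\xvalto{n}\andstate)}{n}=-\xtoinllocal{-g(\xvalto{n}\andstate)}{n}$ gives $\xtoinuglobal{g}{n}=\xtoinulocal{g(\xvalto{n}\andstate)}{n}$. I expect the main obstacle to be the witness construction: $\submartin^*$ must be defined globally, not merely near $\xvalto{n}$, and the submartingale inequality must be confirmed at every situation, the only delicate case being situations incomparable with $\xvalto{n}$, which is dispatched by the chain structure of predecessors in the event tree.
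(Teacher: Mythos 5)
Your proof is correct and follows essentially the same route as the paper's: both reduce to the finite-horizon representation of Proposition~\ref{prop:finite:horizon:gambles}, obtain the inequality $\xtoinlglobal{g}{n}\leq\xtoinllocal{g(\xvalto{n}\andstate)}{n}$ from monotonicity, constant additivity and the submartingale property, and close the gap with a witness submartingale satisfying $\submartin(\xvalto{n})=\xtoinllocal{g(\xvalto{n}\andstate)}{n}$ and $\submartin(\xvalto{n}\andstate)=g(\xvalto{n}\andstate)$. The only difference is that you construct this witness explicitly and verify the submartingale condition at every situation (including the incomparable ones), a detail the paper leaves implicit.
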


\begin{proof}
We give the proof for the lower expectation; the proof for the upper expectation is completely similar.

First, consider any $\submartin\in\submartins$ such that $\submartin(\xvalto{n}\andstate)\leq g(\xvalto{n}\andstate)$, then it follows from coherence [\ref{it:lex:monotone} and~\ref{it:lex:constant:additivity}] and the submartingale character of $\submartin$ that
\begin{equation*}
\xtoinllocal{g(\xvalto{n}\andstate)}{n}
\geq\xtoinllocal{\submartin(\xvalto{n}\andstate)}{n}
\geq\submartin(\xvalto{n}),
\end{equation*}
so Proposition~\ref{prop:finite:horizon:gambles} guarantees that $\xtoinlglobal{g}{n}\leq\xtoinllocal{g(\xvalto{n}\andstate)}{n}$.

To show that the inequality is actually an equality, consider any submartingale $\submartin$ such that $\submartin(\xvalto{n})=\xtoinllocal{g(\xvalto{n}\andstate)}{n}$ and $\submartin(\xvalto{n}\andstate)=g(\xvalto{n}\andstate)$. 
\end{proof}

Thirdly, it is (essentially) the expressions in Proposition~\ref{prop:finite:horizon:gambles} that we have used in Ref.~\cite{cooman2008,hermans2012,skulj2013} for our studies of imprecise Markov chains, which we report in Section~\ref{sec:markov}.
The main result of the present paper, Theorem~\ref{thm:point-wise:ergodic:theorem} in Section~\ref{sec:perron:frobenius}, will build on the {\pflike} results proved in those papers. 

Fourthly, it was also shown in Ref.~\cite{cooman2007d} that the expressions in Proposition~\ref{prop:finite:horizon:gambles} have an interesting interpretation in terms of (precise) probability trees.
Indeed, we can associate with an imprecise probability tree a (usually infinite) collection of (so-called \emph{compatible}) precise probability trees with the same event tree, by associating with each situation $s$ in the event tree some arbitrarily chosen precise local expectation $\sitinlocal{\cdot}{s}$ that belongs to the convex closed set $\solp(\sitinllocal{\cdot}{s})$ of expectations that are compatible with the local lower expectation $\sitinllocal{\cdot}{s}$.
For any $n$-measurable gamble $f$ on $\pths$, the global precise expectations in the compatible precise probability trees will then range over a closed interval whose lower and upper bounds are given by the expressions in Proposition~\ref{prop:finite:horizon:gambles}.

And finally, Shafer and Vovk have shown~\cite[Chapter~8]{shafer2001} that when the local models are precise probability models, these formulae~\eqref{eq:global:lower:gambles} and~\eqref{eq:global:upper:gambles} lead to global models that coincide with the ones found in measure-theoretic probability theory. 
\emph{This implies that the results we will prove below, subsume, as special cases, the classical results of measure-theoretic probability theory.}

\section{A strong law of large numbers for submartingale differences}\label{sec:large:numbers}
We now discuss and prove two powerful convergence results for the processes we have defined in the previous section.

We call an event $A$ \emph{null} if $\upr(A)=\ujoint(\ind{A})=0$, and \emph{strictly null} if there is some test supermartingale $\test$ that converges to $+\infty$ on $A$, meaning that:
\begin{equation*}
\lim\test(\pth)=+\infty\quad\text{for all $\pth\in A$}.
\end{equation*}
Here, a \emph{test supermartingale} is a supermartingale with $\test(\init)=1$ that is moreover non-negative in the sense that $\test(s)\geq0$ for all situations $s\in\sits$.
Any strictly null event is null, but null events need not be strictly null~\cite{vovk2014:itip}.

\begin{proposition}\label{prop:strictly:then:null}
Any strictly null event is null, but not {\itshape vice versa}.\footnote{We infer from the proof that for the null and strictly null events to be the same, it is necessary to consider supermartingales that may assume extended real values, as is done in Refs.~\cite{vovk2014:itip,shafer2012:zero-one}. We see no need for doing so in the context of the present paper.}
\end{proposition}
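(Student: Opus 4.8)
The plan is to establish the two assertions separately: the implication \emph{strictly null $\then$ null} by a direct scaling argument, and the failure of its converse by exhibiting a single counterexample. For the first half, let $A$ be strictly null, witnessed by a test supermartingale $\test$, so that $\test\in\supermartins$, $\test(\init)=1$, $\test(s)\ge0$ for all $s\in\sits$, and $\lim\test(\pth)=+\infty$ for all $\pth\in A$. Fix $\epsilon>0$. First I would observe that $\epsilon\test$ is again a non-negative supermartingale: this is immediate from the non-negative homogeneity~\ref{it:lex:homo:upper} of the local upper expectations, since $\Delta(\epsilon\test)=\epsilon\Delta\test$. On $A$ we have $\liminf(\epsilon\test)(\pth)=+\infty\ge\ind{A}(\pth)$, and off $A$ we have $\liminf(\epsilon\test)(\pth)\ge0=\ind{A}(\pth)$ by non-negativity, so $\epsilon\test$ is admissible in the infimum defining $\ujoint(\ind{A})=\sitinuglobal{\ind{A}}{\init}$ in~\eqref{eq:global:upper:gambles}. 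This gives $\ujoint(\ind{A})\le\epsilon\test(\init)=\epsilon$, and letting $\epsilon\downarrow0$ yields $\ujoint(\ind{A})\le0$. Since $\ind{A}\ge0$, coherence (monotonicity~\ref{it:lex:monotone} and homogeneity~\ref{it:lex:homo:upper}) gives $\ujoint(\ind{A})\ge0$, whence $\upr(A)=\ujoint(\ind{A})=0$ and $A$ is null.

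For the converse I would exhibit a null event that is not strictly null, using a degenerate but admissible precise tree. Take $\states{k}=\set{a,b}$ for every $k$, and let the local model at \emph{every} situation be the precise expectation concentrated on $a$, i.e.\ the functional $g\mapsto g(a)$ on gambles on $\set{a,b}$. Let $A\coloneqq\exact{(b)}=\cset{\pth\in\pths}{\pth_1=b}$. To see that $A$ is null, consider the process $\supermartin$ with $\supermartin(s)=1$ when $(b)\precedes s$ and $\supermartin(s)=0$ otherwise. Its only nonzero difference occurs at $\init$, where the local model sees only the $a$-branch, so the supermartingale inequality there holds with value $0$; hence $\supermartin\in\supermartins$, $\supermartin(\init)=0$, and $\liminf\supermartin(\pth)=\ind{A}(\pth)$ everywhere. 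By~\eqref{eq:global:upper:gambles} this forces $\upr(A)=\ujoint(\ind{A})\le\supermartin(\init)=0$, and coherence gives the reverse inequality, so $A$ is null.

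The delicate point---and the step I expect to be the main obstacle---is showing that $A$ is \emph{not} strictly null. Suppose, towards a contradiction, that some test supermartingale $\test$ satisfied $\lim\test(\pth)=+\infty$ for all $\pth\in A$, and consider the single path $\pth^\circ\coloneqq(b,a,a,a,\dots)\in A$. At every situation $s$ with $(b)\precedes s$ the local model assigns all its weight to $a$, so the supermartingale condition reduces to $\test(sa)\le\test(s)$, where $sa$ denotes the $a$-child of $s$. Iterating this inequality along $\pth^\circ$ from the situation $(b)$ onwards gives $\test\big((\pth^\circ)^{n}\big)\le\test((b))$ for all $n\ge1$. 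Because $\test$ is a real, hence finite-valued, process, $\test((b))<+\infty$, and therefore $\limsup\test(\pth^\circ)\le\test((b))<+\infty$, contradicting $\lim\test(\pth^\circ)=+\infty$. Thus no such $\test$ exists, and $A$ is null but not strictly null. This non-existence argument rests on the fact that along a branch that the local models support fully a \emph{finite} supermartingale cannot increase, and so cannot diverge to $+\infty$ there---exactly the obstruction that the footnote anticipates would disappear if extended-real-valued supermartingales were permitted.
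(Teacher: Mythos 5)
Your proof is correct. The first half (strictly null $\then$ null) is essentially identical to the paper's: scale the witnessing test supermartingale by $\epsilon$, observe it is admissible in the infimum defining $\upr(A)$, and let $\epsilon\downarrow0$; the lower bound $\upr(A)\geq0$ you obtain from coherence of the global model, whereas the paper derives it directly from Lemma~\ref{lem:very:basic:inequality} (and notes in a footnote that the coherence route also works), so this is only a cosmetic difference. For the second half you diverge from the paper in an interesting way. The paper argues at the level of \emph{arbitrary} imprecise probability trees: it shows that any exact event $\exact{\xvalto{n+1}}$ with $\xtoinulocal{\ind{\set{\xval{n+1}}}}{n}=0$ is null, and then that \emph{no} exact event in \emph{any} tree can ever be strictly null, the latter by applying Lemma~\ref{lem:very:basic:inequality} to $-\test$ to get $\test(s)\geq\inf_{\pth\in\exact{s}}\liminf\test(\pth)=+\infty$, which is impossible for a real process. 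You instead fix one concrete degenerate precise tree (all local models concentrated on $a$), take $A=\exact{(b)}$, reuse the paper's null-witnessing supermartingale, and then rule out strict nullity by iterating the supermartingale inequality $\test(sa)\leq\test(s)$ along the single path $(b,a,a,\dots)$ to cap $\test$ by the finite value $\test((b))$. Your iteration is in effect a hands-on special case of the Lemma~\ref{lem:very:basic:inequality} argument, specialised to a fully supported branch; it buys a completely self-contained and very transparent counterexample, while the paper's version buys generality (the null/strictly-null gap occurs at every null exact event, in every tree admitting one). Both proofs isolate the same obstruction the footnote points to: a real-valued supermartingale cannot diverge to $+\infty$ along a path that the local models give full weight to.
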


\begin{proof}
Consider any event $A$. 
Recall the following expression for $\upr(A)$:
\begin{equation}\label{eq:strictly:then:null:uprob}
\upr(A)=\uex(\ind{A})
=\inf\cset{\supermartin(\init)}
{\supermartin\in\supermartins\text{ and }\liminf\supermartin\geq\ind{A}}.
\end{equation}  
Also, consider any supermartingale $\supermartin$ such that $\liminf\supermartin\geq\ind{A}$, then it follows from Lemma~\ref{lem:very:basic:inequality} and the fact that $-\supermartin$ is a submartingale that
\begin{equation}\label{eq:strictly:then:null:nonnegative}
\supermartin(\init)
\geq\inf_{\pth\in\pths}\liminf\supermartin(\pth)
\geq\inf_{\pth\in\pths}\ind{A}(\pth)
\geq0.
\end{equation}
Combined with Equation~\eqref{eq:strictly:then:null:uprob}, this implies that $\upr(A)\geq0$.\footnote{This will also follow from \ref{it:elex:more:bounds} further on.}

We are now ready for the proof.
Assume that $A$ is strictly null, so there is some test supermartingale $\test$ that converges to $+\infty$ on $A$.
Then for any $\alpha>0$, $\alpha\test$ is a supermartingale such that $\liminf(\alpha\test)\geq\ind{A}$, and therefore we infer that $0\leq\upr(A)\leq\alpha\test(\init)=\alpha$, where the second inequality follows from Equation~\eqref{eq:strictly:then:null:uprob}. 
Since this holds for all $\alpha>0$, we find that $\upr(A)=0$.

To show that not every null event is strictly null, we show that while an exact event may be null, it can never be strictly null.

First, we show that exact events may be null.
Consider any  situation $\xvalto{n+1}$, with $n\in\natswithzero$, such that $\xtoinulocal{\ind{\set{\xval{n+1}}}}{n}=0$, then we show that $\upr(\exact{\xvalto{n+1}})=0$.
Indeed, consider the real process $\supermartin$ that assumes the value $1$ in all situations that follow (or coincide with) $\xvalto{n+1}$, and $0$ elsewhere. 
Then clearly $\supermartin(\init)=0$, $\liminf\supermartin=\indexact{\xvalto{n+1}}$ and $\supermartin$ is a supermartingale because $\xtoinulocal{\supermartin(\xvalto{n}\andstate)}{n}=\xtoinulocal{\ind{\set{\xval{n+1}}}}{n}=0=\supermartin(\xvalto{n})$.
Equation~\eqref{eq:strictly:then:null:uprob} now implies that $\upr(\exact{\xvalto{n+1}})\leq0$ and therefore---since we already know that $\upr(\exact{\xvalto{n+1}})\geq0$---that $\upr(\exact{\xvalto{n+1}})=0$.

Next, if $\exact{s}$ were strictly null, there would be a test supermartingale $\supermartin$ that converges to $+\infty$ on $\exact{s}$, and therefore Lemma~\ref{lem:very:basic:inequality} and the fact that $-\supermartin$ is a submartingale would imply that $\supermartin(s)\geq\inf_{\pth\in\exact{s}}\liminf\supermartin(\pth)=+\infty$, which is impossible for the real process~$\supermartin$.
\end{proof}

In this paper, we will use the `strict' approach, and prove that events are strictly null---and therefore also null---by actually showing that there is a test supermartingale that converges to $+\infty$ there.

As usual, an inequality or equality between two variables is said to hold \emph{(strictly) almost surely} when the event that it does not hold is (strictly) null.
Shafer and Vovk~\cite{shafer2001,vovk2014:itip} have proved the following interesting result, which we will have occasion to use a few times further on.
It can be seen as a generalisation of Doob's supermartingale convergence theorem~\cite[Sections~11.5--7]{williams1991} to imprecise probability trees.
We provide its proof, adapted from Ref.~\cite{shafer2012:zero-one} to our specific definitions and assumptions, with corrections for a few tiny glitches, for the sake of completeness.

\begin{theorem}[\protect{\cite[Section~6.5]{vovk2014:itip}} Supermartingale convergence theorem]\label{thm:supermartingale:convergence}
Let\/ $\supermartin$ be a supermartingale that is bounded below.
Then $\supermartin$ converges strictly almost surely to a real variable.
\end{theorem}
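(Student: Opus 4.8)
The plan is to show that the event $N$ on which $\supermartin$ fails to converge to a real variable is strictly null, by producing for each of its constituent pieces a test supermartingale that tends to $+\infty$ there, and then combining these. Since $\supermartin$ is bounded below, say $\supermartin(s)\geq B$ for all $s\in\sits$, every path satisfies $\liminf\supermartin(\pth)\geq B$, so divergence to $-\infty$ cannot occur; hence $N$ decomposes as
\[
N=\{\pth\in\pths\colon\liminf\supermartin(\pth)<\limsup\supermartin(\pth)\}\cup\{\pth\in\pths\colon\lim\supermartin(\pth)=+\infty\}.
\]
Writing the oscillation event as the countable union $\bigcup_{a<b}\Lambda_{a,b}$ over rational pairs $a<b$, where $\Lambda_{a,b}\coloneqq\{\liminf\supermartin<a\}\cap\{\limsup\supermartin>b\}$, it suffices to show that each of these countably many events is strictly null. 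I would isolate as a preliminary principle the fact that a countable union of strictly null events is again strictly null: given test supermartingales $\test_i$ with $\lim\test_i=+\infty$ on $A_i$, a suitably weighted sum $\sum_i\lambda_i\test_i$ is again a test supermartingale and tends to $+\infty$ on $\bigcup_iA_i$, since all its terms are non-negative.

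The divergence piece is the easy one. Adding the constant $1-B$ turns $\supermartin$ into a supermartingale bounded below by $1$ without affecting whether or where it converges, so I may assume $\supermartin\geq1$ and in particular $\supermartin(\init)>0$. Then $\test\coloneqq\supermartin/\supermartin(\init)$ is non-negative, satisfies $\test(\init)=1$, and is a supermartingale (positive scaling preserves the defining inequality $\xtoinulocal{\Delta\supermartin(\xvalto{n})}{n}\leq0$ by non-negative homogeneity [\ref{it:lex:homo:upper}]); moreover $\lim\test(\pth)=+\infty$ exactly when $\lim\supermartin(\pth)=+\infty$. Hence $\{\lim\supermartin=+\infty\}$ is strictly null.

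For the oscillation piece I would run a game-theoretic upcrossing (Doob) argument. Fix rationals $a<b$; since $\liminf\supermartin\geq B$, the event $\Lambda_{a,b}$ is empty unless $a>B$, so I may assume $a>B$. Define an $n$-measurable, $\{0,1\}$-valued ``holding'' process $H$ that switches to $1$ as soon as $\supermartin$ drops to $\leq a$ and back to $0$ as soon as it rises to $\geq b$, and form the gamble process $\gambleprocess(\xvalto{n})\coloneqq H(\xvalto{n})\Delta\supermartin(\xvalto{n})$. Because $H(\xvalto{n})\in\{0,1\}$ is fixed given $\xvalto{n}$, non-negative homogeneity [\ref{it:lex:homo:upper}] yields $\xtoinulocal{\gambleprocess(\xvalto{n})}{n}=H(\xvalto{n})\xtoinulocal{\Delta\supermartin(\xvalto{n})}{n}\leq0$, so $\summed{\gambleprocess}$ is a supermartingale. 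Telescoping over holding stretches gives $\summed{\gambleprocess}(\xvalto{n})\geq(b-a)U_n-(a-B)$, where $U_n$ (a function of the path) counts the completed upcrossings of $[a,b]$ up to time $n$: each completed upcrossing contributes at least $b-a$, and the possibly unfinished final excursion contributes at least $B-a$. In particular $\summed{\gambleprocess}\geq-(a-B)$, so $\test_{a,b}\coloneqq(\summed{\gambleprocess}+(a-B))/(a-B)$ is a test supermartingale; and on $\Lambda_{a,b}$ one has $U_n\to+\infty$, so $\lim\test_{a,b}=+\infty$ there. Thus $\Lambda_{a,b}$ is strictly null.

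I expect the main obstacle to be the combination step rather than either individual construction. Verifying that the countable weighted sum $\sum_i\lambda_i\test_i$ is a genuine (real-valued) supermartingale requires two things: that the infinite sum stays finite at every situation, and countable subadditivity of the local upper expectations, so that $\xtoinulocal{\sum_i\lambda_i\Delta\test_i(\xvalto{n})}{n}\leq\sum_i\lambda_i\xtoinulocal{\Delta\test_i(\xvalto{n})}{n}\leq0$. Finiteness I would arrange by letting the weights $\lambda_i$ decay fast enough relative to the values of $\test_i$ along a fixed enumeration of the countable set $\sits$; the subadditivity is available because each local model is a coherent upper expectation on a \emph{finite} state set. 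The upcrossing bookkeeping that makes the telescoping estimate precise is routine but is the other place where care is needed.
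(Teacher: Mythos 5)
Your proof is correct and follows essentially the same route as the paper's: the same decomposition of the non-convergence event into an oscillation part and a divergence-to-$+\infty$ part, the same use of the (normalised) supermartingale itself as the test supermartingale for the divergence part, and the same Doob-style upcrossing strategy for each rational pair $a<b$ --- your holding process $H$ is just a reformulation of the paper's recursively defined cuts $\highcut{k}$, $\lowcut{k}$, with non-negativity of the held sum secured by the global lower bound $B$ rather than by first normalising $\supermartin$ to be non-negative. The one point where you genuinely deviate is the step you yourself flag as the main obstacle, namely verifying that the countable combination $\test=\sum_i\lambda_i\test_i$ is a real-valued supermartingale. Your justification works: on a finite state set, pointwise convergence of the partial sums of the increments is uniform, coherent upper expectations are $1$-Lipschitz for the supremum norm, so finite subadditivity passes to the limit; and your weight-decay scheme along an enumeration of $\sits$ does secure finiteness. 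But the paper gets both finiteness and the supermartingale property in one stroke from the observation that every increment $\Delta\test[a,b](s)$ is either $\Delta\supermartin(s)$ or $0$, so that $\Delta\test(s)=\gamma(s)\Delta\supermartin(s)$ for a single non-negative real coefficient $\gamma(s)$ (in your formulation $\gamma(s)=\sum_{a<b}\lambda_{a,b}H_{a,b}(s)/(a-B)$, finite once the weights are chosen with $\sum_{a<b}\lambda_{a,b}/(a-B)<\infty$), after which a single application of non-negative homogeneity [\ref{it:lex:homo:upper}] gives $\sitinulocal{\Delta\test(s)}{s}=\gamma(s)\sitinulocal{\Delta\supermartin(s)}{s}\leq0$. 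This trick is worth adopting: it needs no continuity or countable-subadditivity property of the local models and so survives in settings where those are unavailable.
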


\begin{proof}
Because $\supermartin$ is bounded below, we may assume without loss of generality that $\supermartin$ is non-negative and that $\supermartin(\init)=1$, as adding real constants to $\supermartin$, or multiplying it with positive real constants, does not affect its convergence properties nor---by coherence [\ref{it:lex:homo} and~\ref{it:lex:constant:additivity}] of the local models---the fact that it is a supermartingale. Hence, $\supermartin$ is a test supermartingale.
Also, because $\supermartin$ is bounded below, it cannot converge to $-\infty$ on any path.
Let $A$ be the event where $\supermartin$ converges to $+\infty$, and let $B$ be the event where it diverges.
We have to show that there is a test supermartingale that converges to $+\infty$ on $A\cup B$.
\par
Associate with any couple of rational numbers $0<a<b$ the following recursively defined sequences of cuts $\highcut{k}$ and $\lowcut{k}$.
Let $\lowcut{0}\coloneqq\set{\init}$, and for $k\in\nats$:
\begin{align}
\highcut{k}
\coloneqq&\cset{s\sfollows\lowcut{k-1}}
{\supermartin(s)>b\text{ and }(\forall t\in\group{\lowcut{k-1},s})\supermartin(t)\leq b}
\label{eq:highcuts}\\
\lowcut{k}
\coloneqq&\cset{s\sfollows\highcut{k}}
{\supermartin(s)<a\text{ and }(\forall t\in\group{\highcut{k},s})\supermartin(t)\geq a}.
\label{eq:lowcuts}
\end{align}
Consider the real process $\test[a,b]$ with the following recursive definition: 
\begin{equation}\label{eq:test:supermartin:first}
\test[a,b](\init)\coloneqq1
\text{ and }
\test[a,b](s\andstate)
\coloneqq
\begin{cases}
\test[a,b](s)+\Delta\supermartin(s)
&\text{ if $s\in\bigcup_{k\in\nats}\lsqgroup{\lowcut{k-1},\highcut{k}}$}\\
\test[a,b](s)
&\text{ otherwise}.
\end{cases}
\end{equation}
We now show that $\test[a,b]$ is a test supermartingale that converges to $+\infty$ on any path $\pth$ for which $\liminf\supermartin(\pth)<a<b<\limsup\supermartin(\pth)$.

In what follows, for any situation $s$ and for any $k\in\nats$, when $s\sfollows\highcut{k}$, we denote by $u^s_k$ the (necessarily unique) situation in $\highcut{k}$ such that $u^s_k\sprecedes s$.
Similarly, for any $k\in\natswithzero$, when $s\sfollows\lowcut{k}$, we denote by $v^s_k$ the (necessarily unique) situation in $\lowcut{k}$ such that $v^s_k\sprecedes s$; observe that $v^s_0=\init$.
Recall from Equations~\eqref{eq:highcuts} and~\eqref{eq:lowcuts} that, for all $k\in\nats$, $\supermartin(u^s_k)>b$ and $\supermartin(v^s_k)<a$.

Since it follows from Equation~\eqref{eq:test:supermartin:first} that $\Delta\test[a,b](s)$ is zero or equal to $\Delta\supermartin(s)$, it follows from coherence [\ref{it:lex:more:bounds}] and $\sitinulocal{\Delta\supermartin(s)}{s}\leq0$ that $\sitinulocal{\Delta\test[a,b](s)}{s}\leq0$ for all situations $s$, so $\test[a,b]$ is indeed a supermartingale.

To prove that $\test[a,b]$ is non-negative, we recall from Equation~\eqref{eq:test:supermartin:first} that $\test[a,b]$ can only change in situations $s\in\lsqgroup{\lowcut{k-1},\highcut{k}}$, with $k\in\nats$.
Since $\test[a,b](\init)=1$, taking into account Lemma~\ref{lem:low:high}, this means that we only have to prove that $\test[a,b](c)\geq0$ for the children $c$ of the situations $s\in\lsqgroup{\lowcut{k-1},\highcut{k}}$, with $k\in\nats$.
There are two possible cases to consider:
The first case (a) is that $s\in\lsqgroup{\init,\highcut{1}}$.
Since $\test[a,b](\init)=\supermartin(\init)=1$, we gather from Equation~\eqref{eq:test:supermartin:first} for $k=1$ that then $\test[a,b](c)=\supermartin(c)\geq0$ for all children $c$ of $s$.
The second case (b) is that $s\in\lsqgroup{\lowcut{k},\highcut{k+1}}$ for some $k\in\nats$.
We then gather from Equation~\eqref{eq:test:supermartin:first} and Lemma~\ref{lem:low:high} that for all children $c$ of $s$
\begin{align*}
\test[a,b](c)
&=\test[a,b](\init)+[\supermartin(u^s_1)-\supermartin(\init)]
+\sum_{\ell=2}^{k}[\supermartin(u^s_{\ell})-\supermartin(v^s_{\ell-1})]
+[\supermartin(c)-\supermartin(v^s_k)]\\
&\geq b+(k-1)(b-a)+\supermartin(c)-\supermartin(v^s_k)
\geq k(b-a)+\supermartin(c)
\geq k(b-a)
\geq0.
\end{align*}
We conclude that $\test[a,b]$ is indeed non-negative.

It remains to prove that $\test[a,b]$ converges to $+\infty$ on all paths $\pth$ where $\liminf\supermartin(\pth)<a<b<\limsup\supermartin(\pth)$.
By Lemma~\ref{lem:low:high}, any such path $\pth$ goes through the entire chain of cuts $\lowcut{0}\sprecedes\highcut{1}\sprecedes\lowcut{1}\sprecedes\cdots\sprecedes\highcut{n}\sprecedes\lowcut{n}\sprecedes\cdots$, meaning that for any situation $s$ on this path $\pth$, one of the following cases obtains.
The first case is that $s\in\sqgroup{\init,\highcut{1}}$.
We gather from the discussion of case (a) above that then $\test[a,b](s)=\supermartin(s)$.
The second case is that $s\in\usqgroup{\highcut{k},\lowcut{k}}$ for some $k\in\nats$.
Then we gather from Equation~\eqref{eq:test:supermartin:first} and Lemma~\ref{lem:low:high} that
\begin{equation*}
\test[a,b](s)
=\test[a,b](\init)+[\supermartin(u^s_1)-\supermartin(\init)]
+\sum_{\ell=2}^{k}[\supermartin(u^s_\ell)-\supermartin(v^s_{\ell-1})]
\geq b+(k-1)(b-a).
\end{equation*}
And the third possible case is that $s\in\usqgroup{\lowcut{k},\highcut{k+1}}$ for some $k\in\nats$.
Then we gather from the discussion of case (b) above that $\test[a,b](s)\geq k(b-a)$.
Since $b>a$, we conclude that indeed $\lim\supermartin(\pth)=+\infty$.

To finish, use the countable set of rational couples $K\coloneqq\cset{(a,b)\in\rationals^2}{0<a<b}$ to define the process $\test$ by letting $\test(\init)\coloneqq1$ and, for all $s\in\sits$, $\Delta\test(s)\coloneqq\sum_{(a,b)\in K}w^{a,b}\Delta\test[a,b](s)$, a countable convex combination of the real numbers $\Delta\test[a,b](s)$, with coefficients $w^{a,b}>0$ that sum to $1$.
Observe that
\begin{equation*}
\Delta\test(s)
=\sum_{(a,b)\in K}w^{a,b}\Delta\test[a,b](s)
=\gamma(s)\Delta\supermartin(s)\in\reals,
\end{equation*}
where $\gamma(s)\in[0,1]$, because it follows from Equation~\eqref{eq:test:supermartin:first} that for any $(a,b)\in K$, $\Delta\test[a,b](s)$ is equal to $\Delta\supermartin(s)$ or zero.
As an immediate consequence, $\test$ is a real process and $\test=\sum_{(a,b)\in K}w^{a,b}\test[a,b]$.
Clearly, $\test$ has $\test(\init)=1$, is non-negative and converges to $+\infty$ on $B$.
Moreover, since $\Delta\test(s)=\gamma(s)\Delta\supermartin(s)$, it follows from coherence [\ref{it:lex:homo:upper}] that $\sitinulocal{\Delta\test(s)}{s}=\gamma(s)\sitinulocal{\Delta\supermartin(s)}{s}\leq0$ for all $s\in\sits$, so $\test$ is  a test supermartingale.
\par
Since coherence [\ref{it:lex:sub} and~\ref{it:lex:homo:upper}] implies that a convex combination of two test supermartingales is again a test supermartingale, we conclude from all these considerations that the process $\frac{1}{2}(\supermartin+\test)$ is a test supermartingale that converges to $+\infty$ on $A\cup B$.
\end{proof}

\begin{lemma}\label{lem:low:high}
$\lowcut{k-1}\sprecedes\highcut{k}\sprecedes\lowcut{k}$ for all $k\in\nats$;
\end{lemma}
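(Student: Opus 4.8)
The plan is to obtain the two strict cut-precedences $\lowcut{k-1}\sprecedes\highcut{k}$ and $\highcut{k}\sprecedes\lowcut{k}$ directly from the recursive definitions, and then to verify the one point on which the statement genuinely rests: that $\highcut{k}$ and $\lowcut{k}$ really are cuts, so that the relation $\sprecedes$ between them is the cut-relation introduced in Section~\ref{sec:processes}. For the precedences, recall that $\ucut\sprecedes\vcut$ means $(\forall v\in\vcut)(\exists u\in\ucut)\,u\sprecedes v$. Every $s\in\highcut{k}$ satisfies $s\sfollows\lowcut{k-1}$ by its defining condition, that is, $u\sprecedes s$ for some $u\in\lowcut{k-1}$; this is precisely $\lowcut{k-1}\sprecedes\highcut{k}$. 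Symmetrically, each $s\in\lowcut{k}$ satisfies $s\sfollows\highcut{k}$, which gives $\highcut{k}\sprecedes\lowcut{k}$. The strictness built into $\sfollows$ also yields $\lowcut{k-1}\cap\highcut{k}=\emptyset$ and $\highcut{k}\cap\lowcut{k}=\emptyset$, as noted just after the definition of $\sprecedes$; chaining over $k$ then produces the whole sequence $\lowcut{0}\sprecedes\highcut{1}\sprecedes\lowcut{1}\sprecedes\cdots$ invoked in the proof of Theorem~\ref{thm:supermartingale:convergence}.

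The substantive step, which I would spell out, is that the elements of $\highcut{k}$ (and of $\lowcut{k}$) are mutually incomparable. Suppose $s\sprecedes s'$ with $s,s'\in\highcut{k}$. Since $s\sfollows\lowcut{k-1}$ there is $u\in\lowcut{k-1}$ with $u\sprecedes s$, and then $u\sprecedes s\sprecedes s'$ puts $s$ in $\group{\lowcut{k-1},s'}$; the minimality clause in the definition of $\highcut{k}$ applied to $s'$ then forces $\supermartin(s)\leq b$, contradicting $\supermartin(s)>b$. Hence $\highcut{k}$ is a cut. The argument for $\lowcut{k}$ is the mirror image, with $\highcut{k}$ replacing $\lowcut{k-1}$, the clause $(\forall t\in\group{\highcut{k},s'})\supermartin(t)\geq a$ replacing the one bounding by $b$, and the strict inequality $\supermartin(s)<a$ supplying the contradiction.

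I do not expect a real obstacle here: the only geometric input is that the ancestors of a fixed situation form a $\precedes$-chain, so a cut meets that chain in at most one situation. The same fact also gives, as a by-product, the uniqueness of the predecessors $u^s_k\in\highcut{k}$ and $v^s_k\in\lowcut{k}$ relied upon later in the proof of Theorem~\ref{thm:supermartingale:convergence}. Everything beyond this is a direct unwinding of the defining conditions \eqref{eq:highcuts} and \eqref{eq:lowcuts}, with no estimates to carry out.
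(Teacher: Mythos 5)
Your proposal is correct and follows essentially the same route as the paper: both read the two precedences directly off the recursive definitions \eqref{eq:highcuts} and \eqref{eq:lowcuts}, with the empty-cut cases being vacuous under the definition of $\sprecedes$. The only difference is that you additionally verify that $\highcut{k}$ and $\lowcut{k}$ are genuine cuts (mutual incomparability via the minimality clauses), a point the paper leaves implicit; your argument for it is sound.
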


\begin{proof}
The statement follows immediately from Equations~\eqref{eq:highcuts} and~\eqref{eq:lowcuts}. 
The case $\lowcut{k-1}=\emptyset$ presents no problem, because Equation~\eqref{eq:highcuts} tells us that then $\highcut{k}=\emptyset$ as well.
Neither does the case $\highcut{k}=\emptyset$, because Equation~\eqref{eq:lowcuts} tells us that then $\lowcut{k}=\emptyset$ as well.
\end{proof}

We now turn to a very general version of the strong law of large numbers. 
Weak (as well as less general) versions of this law were proved by one of us in Refs.~\cite{cooman2004a,cooman2007d}.
It is this law that will, in Section~\ref{sec:perron:frobenius}, be used to derive our version of the point-wise ergodic theorem.
Its proof is based on a tried-and-tested method for constructing test supermartingales that goes back to an idea in Ref.~\cite[Lemma~3.3]{shafer2001}.

\begin{theorem}\label{thm:slln:submartingale:differences}
Let\/ $\submartin$ be a submartingale such that $\Delta\submartin$ is uniformly bounded and let $\predictable$ be a real process that only assumes values in $\{0,1\}$.
Then strictly almost surely:
\begin{equation*}
\lim\summed{\predictable}=+\infty
\then
\liminf\pavg{\submartin}\geq0.
\end{equation*}
\end{theorem}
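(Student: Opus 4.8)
The plan is to prove that the \emph{bad} event
\[
A\coloneqq\cset{\pth\in\pths}{\lim\summed{\predictable}(\pth)=+\infty\text{ and }\liminf\pavg{\submartin}(\pth)<0}
\]
is strictly null, by exhibiting test supermartingales that blow up on it. Since $\liminf\pavg{\submartin}(\pth)<0$ holds exactly when $\liminf\pavg{\submartin}(\pth)<-\epsilon$ for some rational $\epsilon>0$, we may write $A=\bigcup_{\epsilon\in\rationals_{>0}}A_\epsilon$ with $A_\epsilon\coloneqq\cset{\pth}{\lim\summed{\predictable}(\pth)=+\infty\text{ and }\liminf\pavg{\submartin}(\pth)<-\epsilon}$. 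A countable union of strictly null events is again strictly null (combine the witnessing test supermartingales in a convex series with positive weights summing to $1$, invoking subadditivity~\ref{it:lex:sub} and non-negative homogeneity~\ref{it:lex:homo:upper} of the local models together with their continuity on the finite spaces $\gambleson{k}$), so it suffices to show each $A_\epsilon$ is strictly null. Writing $B$ for a uniform bound on $\Delta\submartin$, the average $\pavg{\submartin}$ is a weighted mean of increments bounded below by $-B$, so $\liminf\pavg{\submartin}\geq-B$; hence $A_\epsilon=\emptyset$ for $\epsilon\geq B$, and I may restrict to $0<\epsilon<B$.

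For such an $\epsilon$, I would build a \emph{multiplicative} test supermartingale that profits whenever the running $\predictable$-average of the increments drops below $-\epsilon$. For a small constant $\lambda>0$ to be fixed, define the real process $\test$ by $\test(\init)\coloneqq1$ and
\[
\test(\xvalto{n})\coloneqq\prod_{k=0}^{n-1}\Bigl(1-\lambda\predictable(\xvalto{k})\bigl(\Delta\submartin(\xvalto{k})(\xval{k+1})+\tfrac{\epsilon}{2}\bigr)\Bigr).
\]
Taking $\lambda\leq\frac{1}{3B}$ keeps every factor in $[\tfrac12,\tfrac32]$, so $\test>0$ and $\test(\init)=1$. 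At each situation $s$ its increment, as a gamble on the next state, is $\Delta\test(s)=-\lambda\test(s)\predictable(s)\bigl(\Delta\submartin(s)+\tfrac{\epsilon}{2}\bigr)$; since $\predictable(s)\in\{0,1\}$ and $\test(s)\geq0$, non-negative homogeneity~\ref{it:lex:homo:upper}, constant additivity~\ref{it:lex:constant:additivity}, and the submartingale inequality $\sitinulocal{-\Delta\submartin(s)}{s}\leq0$ give $\sitinulocal{\Delta\test(s)}{s}=\lambda\test(s)\predictable(s)\bigl(\sitinulocal{-\Delta\submartin(s)}{s}-\tfrac{\epsilon}{2}\bigr)\leq0$. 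Thus $\test$ is a test supermartingale.

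The heart of the matter is the growth estimate. Taking logarithms and using $\log(1-x)\geq-x-x^2$ for $\abs{x}\leq\tfrac12$, together with $\predictable^2=\predictable$ and $\abs{\Delta\submartin+\tfrac{\epsilon}{2}}\leq\tfrac{3B}{2}$, one obtains at every situation with $\summed{\predictable}(\xvalto{n})>0$ that
\[
\frac{\log\test(\xvalto{n})}{\summed{\predictable}(\xvalto{n})}\geq-\lambda\pavg{\submartin}(\xvalto{n})-\frac{\lambda\epsilon}{2}-\frac{9\lambda^2B^2}{4}.
\]
On $A_\epsilon$ there are times $n_j\to\infty$ with $\pavg{\submartin}(\pth^{n_j})<-\epsilon$ and $\summed{\predictable}(\pth^{n_j})\to+\infty$; along these the right-hand side is at least $\lambda\bigl(\tfrac{\epsilon}{2}-\tfrac{9\lambda B^2}{4}\bigr)$, a strictly positive constant $c$ once $\lambda<\frac{2\epsilon}{9B^2}$. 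Hence $\log\test(\pth^{n_j})\geq c\,\summed{\predictable}(\pth^{n_j})\to+\infty$, so $\limsup\test=+\infty$ on $A_\epsilon$.

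The final, and most delicate, step is to upgrade this $\limsup$ into genuine divergence, since strict nullity requires $\lim\test=+\infty$; a bare multiplicative strategy only forces the process up infinitely often. I would resolve this not by refining the construction but by invoking the supermartingale convergence theorem (Theorem~\ref{thm:supermartingale:convergence}): being a non-negative supermartingale, $\test$ is bounded below, hence converges strictly almost surely to a \emph{real} variable, so the event where $\test(\pth^{n})$ fails to converge to a real number is strictly null. Since $\limsup\test=+\infty$ on $A_\epsilon$, that event contains $A_\epsilon$, and a subset of a strictly null event is strictly null (the same witnessing supermartingale serves). Thus each $A_\epsilon$, and therefore $A$, is strictly null. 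I expect the two points needing the most care to be the verification that the countable combination in the first paragraph is again a test supermartingale (a continuity argument on the finite local spaces), and the clean separation of the penalty $\tfrac{\epsilon}{2}$ from the threshold $\epsilon$, which is precisely what allows a \emph{single} $\lambda$ to work uniformly over all paths in $A_\epsilon$.
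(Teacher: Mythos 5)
Your construction is, at its core, the same as the paper's: a multiplicative test supermartingale built from the $\predictable$-selected increments (the paper's Lemma~\ref{lem:martinwlln}, which extracts the gain from the averaging estimate rather than from an explicit $\epsilon/2$ penalty inside each factor---a cosmetic difference), the logarithmic bound $\log(1-x)\geq-x-x^2$, a countable decomposition over rational thresholds, and the supermartingale convergence theorem (Theorem~\ref{thm:supermartingale:convergence}) to upgrade $\limsup=+\infty$ into genuine divergence. The verification that your $\test$ is a test supermartingale, the growth estimate, and the conclusion that each $A_\epsilon$ is strictly null all check out.

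The one step that does not go through as written is the opening reduction: ``a countable union of strictly null events is strictly null, by combining the witnessing test supermartingales in a convex series with positive weights summing to $1$.'' In this paper's framework supermartingales are real-valued, and a countable convex combination $\sum_{n}w_n\test_n$ of non-negative test supermartingales with \emph{arbitrary} positive weights need not be a real process: at a fixed situation $s$ the values $\test_n(s)$ are in general unbounded over $n$, and the witnesses handed to you by Theorem~\ref{thm:supermartingale:convergence} come with no quantitative control, so the series can equal $+\infty$ at some situation, at which point it is no longer a supermartingale in the sense used here (this is precisely the real-valued/extended-real-valued distinction the paper flags around Proposition~\ref{prop:strictly:then:null}). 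The paper avoids the issue by ordering the argument the other way around: it first combines the explicitly bounded multiplicative supermartingales $\process_\submartin^{(r)}$ (each satisfies $\process_\submartin^{(r)}(\xvalto{n})\leq(\frac{3}{2})^{n}$, so the combination is a real process with the same bound), and only then applies the convergence theorem \emph{once} to the combined process. Your version is repairable without changing the architecture---since $\sits$ is countable you can choose the weights adaptively, small enough relative to the values of the $\test_n$ on an enumeration of $\sits$ that the series converges at every situation, and then renormalise---but some such choice must be made explicit; the continuity of the local models on the finite spaces $\gambleson{k+1}$, which you do flag, is the easier half of that verification. The separation of the penalty $\tfrac{\epsilon}{2}$ from the threshold $\epsilon$, which you single out as delicate, is in fact fine as you present it.
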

\noindent
If $\predictable$ is equal to $1$ in all situations, then $\lim\summed{\predictable}=+\infty$ on all paths, so the following special case is immediate.

\begin{corollary}[Strong law of large numbers for submartingale differences]\label{cor:slln:submartingale:differences}
Let\/ $\submartin$ be a submartingale such that $\Delta\submartin$ is uniformly bounded.
Then $\liminf\avg{\submartin}\geq0$ strictly almost surely.
\end{corollary}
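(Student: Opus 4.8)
The plan is to obtain this statement as the special case $\predictable\equiv1$ of Theorem~\ref{thm:slln:submartingale:differences}, so that essentially all the work has already been done there. The submartingale $\submartin$ considered here satisfies exactly the hypotheses of that theorem---it is a submartingale with $\Delta\submartin$ uniformly bounded---so I would simply instantiate the auxiliary process $\predictable$ to be the real process that equals $1$ in every situation. This is trivially a $\{0,1\}$-valued real process, and hence the theorem applies verbatim with this choice.

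With $\predictable\equiv1$, two elementary identities reduce the \emph{conditional} conclusion of the theorem to the \emph{unconditional} one we want. First, from the (abuse-of-notation) definition $\summed{\predictable}(\xvalto{n})=\sum_{k=0}^{n-1}\predictable(\xvalto{k})$ we get $\summed{\predictable}(\xvalto{n})=n$ for every situation $\xvalto{n}$, so that $\lim\summed{\predictable}(\pth)=+\infty$ on \emph{every} path $\pth\in\pths$; in particular the antecedent of the implication in Theorem~\ref{thm:slln:submartingale:differences} holds identically, not merely strictly almost surely. Second, as already noted when the $\predictable$-averaged process was introduced, taking $\predictable$ identically equal to $1$ makes all the denominators $\summed{\predictable}$ equal to $n$ and recovers the ordinary path average, so that $\pavg{\submartin}=\avg{\submartin}$ in every situation.

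Combining these, Theorem~\ref{thm:slln:submartingale:differences} asserts that the event on which the implication $\lim\summed{\predictable}=+\infty\then\liminf\pavg{\submartin}\geq0$ fails is strictly null. Since the antecedent holds on every path and $\pavg{\submartin}=\avg{\submartin}$, that failure event is \emph{exactly} the event $\cset{\pth\in\pths}{\liminf\avg{\submartin}(\pth)<0}$, so this latter event is strictly null---which is precisely the statement that $\liminf\avg{\submartin}\geq0$ strictly almost surely. I expect no genuine obstacle here: the entire content of the corollary resides in the parent theorem, and the only thing to verify is the bookkeeping just described, namely that specialising $\predictable\equiv1$ collapses the implication into its consequent because its hypothesis can no longer fail. (Should one prefer not to rely on the exact coincidence of the two events, the same conclusion follows from the general remark that a subset of a strictly null set is again strictly null, since any test supermartingale witnessing the latter also witnesses the former.)
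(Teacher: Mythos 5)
Your proposal is correct and is exactly the paper's own argument: the paper derives the corollary by the remark preceding it, namely that taking $\predictable$ identically equal to $1$ gives $\lim\summed{\predictable}=+\infty$ on all paths and $\pavg{\submartin}=\avg{\submartin}$, so Theorem~\ref{thm:slln:submartingale:differences} specialises immediately to the stated conclusion. Your extra bookkeeping (identifying the failure event, or alternatively noting that a subset of a strictly null event is strictly null) is sound and merely makes explicit what the paper treats as immediate.
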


\begin{proof}[Proof of Theorem~\ref{thm:slln:submartingale:differences}]
Consider the events $A\coloneqq\cset{\pth\in\pths}{\liminf\pavg{\submartin}(\pth)<0}$ and $D\coloneqq\cset{\pth\in\pths}{\lim\summed{\predictable}(\pth)=+\infty}$.
We have to show that there is some test supermartingale $\test$ that converges to $+\infty$ on the set $D\cap A$.
Let\/ $B>0$ be any uniform real bound on $\Delta\submartin$, meaning that $\abs{\Delta\submartin(s)}\leq B$ for all situations $s\in\sits$. 
We can always assume that $B>1$.
\par
For any $r\in\nats$, let $A_r\coloneqq\cset{\pth\in\pths}{\liminf\pavg{\submartin}(\pth)<-\frac{1}{2^r}}$, then $A=\bigcup_{r\in\nats}A_r$.
So fix any $r\in\nats$ and consider any $\pth\in D\cap A_r$, then
\begin{equation*}
\liminf_{n\to+\infty}\pavg{\submartin}(\pth^{n})<-\frac{1}{2^r}
\end{equation*}
and therefore
\begin{equation*}
(\forall m\in\nats)
(\exists n_m\geq m)
\pavg{\submartin}(\pth^{n_m})<-\frac{1}{2^r}=-\epsilon,
\end{equation*}
with $\epsilon\coloneqq\frac{1}{2^r}>0$. 
Consider now the positive supermartingale of Lemma~\ref{lem:martinwlln}, with in particular $\xi\coloneqq\frac{\epsilon}{2B^2}=\frac{1}{2^{r+1}B^2}$.\footnote{One of the requirements in Lemma~\ref{lem:martinwlln} is that $0<\epsilon<B$, and this is satisfied because we made sure that $B>1$.} 
Denote this test supermartingale by $\process_\submartin^{(r)}$.
It follows from Lemma~\ref{lem:martinwlln} that
\begin{equation}\label{eq:slln:intermediate}
\process_\submartin^{(r)}(\pth^{n_m})
\geq\exp\group[\bigg]{\summed{\predictable}(\pth^{n_m})\frac{\epsilon^2}{4B^2}}
=\exp\group[\bigg]{\summed{\predictable}(\pth^{n_m})\frac{1}{2^{2r+2}B^2}}
\quad\text{for all $m\in\nats$}.
\end{equation}
Consider any real $R>0$ and $m\in\nats$. 
Since $\pth\in D$, we know that $\lim_{n\to+\infty}\summed{\predictable}(\pth^{n})=+\infty$, so there is some natural number $m'\geq m$ such that $\exp\group[\big]{\summed{\predictable}(\pth^{m'})\frac{1}{2^{2r+2}B^2}}>R$.
Hence it follows from the statement in \eqref{eq:slln:intermediate} that there is some $n_{m'}\geq m'\geq m$---whence $\summed{\predictable}(\pth^{n_{m'}})\geq\summed{\predictable}(\pth^{m'})$---such that
\begin{equation*}
\process_\submartin^{(r)}(\pth^{n_{m'}})
\geq\exp\group[\bigg]{\summed{\predictable}(\pth^{n_{m'}})\frac{1}{2^{2r+2}B^2}}
\geq\exp\group[\bigg]{\summed{\predictable}(\pth^{m'})\frac{1}{2^{2r+2}B^2}}
>R,
\end{equation*}
which implies that $\limsup\process_\submartin^{(r)}(\pth)=+\infty$.
Observe that for this test supermartingale, $\process_\submartin^{(r)}(\xvalto{n})\leq\group{\frac{3}{2}}^{n}$ for all $n\in\natswithzero$ and $\xvalto{n}\in\statesto{n}$.

Now define the process $\process_\submartin\coloneqq\sum_{r\in\nats}w^{(r)}\process_\submartin^{(r)}$ as a countable convex combination of the $\process_\submartin^{(r)}$ constructed above, with positive weights $w^{(r)}>0$ that sum to one.
This is a real process, because each term in the series $\process_\submartin(\xvalto{n})$ is non-negative, and moreover 
\begin{equation*}
\process_\submartin(\xvalto{n})
\leq\sum_{r\in\nats}w^{(r)}\process_\submartin^{(r)}
\leq\sum_{r\in\nats}w^{(r)}\group[\Big]{\frac{3}{2}}^{n}
=\group[\Big]{\frac{3}{2}}^{n}
\text{ for all $n\in\natswithzero$ and $\xvalto{n}\in\statesto{n}$}.
\end{equation*}
This process is also positive, has $\process_\submartin(\init)=1$, and, for any $\pth\in D\cap A$, it follows from the argumentation above that there is some $r\in\nats$ such that $\pth\in D\cap A_r$ and therefore
\begin{equation*}
\limsup\process_\submartin(\pth)
\geq w^{(r)}\limsup\process_\submartin^{(r)}(\pth)
=+\infty,
\end{equation*}
so $\limsup\process_\submartin(\pth)=+\infty$.

We now prove that $\process_\submartin$ is a supermartingale.
Consider any $n\in\natswithzero$ and any $\xvalto{n}\in\statesto{n}$, then we have to prove that $\xtoinllocal{-\Delta\process_\submartin(\xvalto{n})}{n}\geq0$.
Since it follows from the argumentation in the proof of Lemma~\ref{lem:martinwlln} that
\begin{equation*}
-\Delta\process_\submartin^{(r)}(\xvalto{n})
=\frac{1}{2^{r+1}B^2}\process_\submartin^{(r)}(\xvalto{n})\Delta\submartin(\xvalto{n})
\quad\text{for all $r\in\nats$}, 
\end{equation*}
we see that
\begin{equation*}
-\Delta\process_\submartin(\xvalto{n})
=-\sum_{r\in\nats}w^{(r)}\Delta\process_\submartin^{(r)}
=\Delta\submartin(\xvalto{n})
\underbrace{\sum_{r\in\nats}\frac{w^{(r)}}{2^{r+1}B^2}\process_\submartin^{(r)}(\xvalto{n})}_{\eqqcolon c(\xvalto{n})},
\end{equation*}
where $c(\xvalto{n})\geq0$ must be a real number, because, using a similar argument as before
\begin{equation*}
c(\xvalto{n})
=\sum_{r\in\nats}\frac{w^{(r)}}{2^{r+1}B^2}\process_\submartin^{(r)}(\xvalto{n})
\leq L\sum_{r\in\nats}w^{(r)}\process_\submartin^{(r)}(\xvalto{n})
\leq L\group[\Big]{\frac{3}{2}}^{n}
\end{equation*}
for some real $L>0$.
Therefore indeed, using the non-negative homogeneity of lower expectations [\ref{it:lex:homo}]:
\begin{equation*}
\xtoinllocal{-\Delta\process_\submartin(\xvalto{n})}{n}
=\xtoinllocal{c(\xvalto{n})\Delta\submartin(\xvalto{n})}{n}
=c(\xvalto{n})\xtoinllocal{\Delta\submartin(\xvalto{n})}{n}
\geq0,
\end{equation*}
because $\submartin$ is a submartingale.

Since we now know that $\process_\submartin$ is a supermartingale that is furthermore bounded below (by $0$) it follows from the supermartingale convergence theorem (Theorem~\ref{thm:supermartingale:convergence}) that there is some test supermartingale $\test_\submartin$ that converges to $+\infty$ on all paths where $\process_\submartin$ does not converge to a real number, and therefore in particular on all paths in $D\cap A$. 
Hence $D\cap A$ is indeed strictly null.
\end{proof}

\begin{lemma}\label{lem:martinwlln}
Consider any real $B>0$ and any $0<\xi<\frac{1}{B}$.
Let\/ $\submartin$ be any submartingale such that $\vert\Delta\submartin\vert\leq B$.
Let $\predictable$ be any real process that only assumes values in $\{0,1\}$.
Then the process $\process_\submartin$ defined by:
\begin{equation*}
\process_\submartin(\xvalto{n})
\coloneqq\prod_{k=0}^{n-1} 
\sqgroup[\big]{1-\xi\predictable(\xvalto{k})\Delta\submartin(\xvalto{k})(\xval{k+1})}
\quad\text{for all $n\in\natswithzero$ and $\xvalto{n}\in\statesto{n}$}
\end{equation*}
is a positive supermartingale with $\process_\submartin(\init)=1$, and therefore in particular a test supermartingale.
Moreover, for $\xi\coloneqq\frac{\epsilon}{2B^2}$, with $0<\epsilon<B$, we have that 
\begin{equation*}
\pavg{\submartin}(\xvalto{n})\leq-\epsilon
\then
\process_\submartin(\xvalto{n})\geq\exp\group[\bigg]{\summed{\predictable}(\xvalto{n})\frac{\epsilon^2}{4B^2}}
\quad\text{for all $n\in\natswithzero$ and $\xvalto{n}\in\statesto{n}$}.
\end{equation*}
\end{lemma}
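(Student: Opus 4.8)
The statement has two parts: (i) that $\process_\submartin$ is a positive test supermartingale, and (ii) the exponential lower bound. For part (i), positivity follows because each factor $1 - \xi\predictable(\xvalto{k})\Delta\submartin(\xvalto{k})(\xval{k+1})$ is positive: the product $\predictable\Delta\submartin$ lies in $[-B, B]$ since $\predictable \in \{0,1\}$ and $|\Delta\submartin| \le B$, and with $0 < \xi < \frac1B$ we get $\xi\predictable\Delta\submartin \in (-1, 1)$, so each factor exceeds $0$. That $\process_\submartin(\init) = 1$ is immediate from the empty product. The key supermartingale inequality is $\xtoinulocal{\Delta\process_\submartin(\xvalto{n})}{n} \le 0$; I would compute $\Delta\process_\submartin(\xvalto{n}) = \process_\submartin(\xvalto{n})\bigl[-\xi\predictable(\xvalto{n})\Delta\submartin(\xvalto{n})\bigr]$ from the recursive product structure, then pull the non-negative real constant $\xi\process_\submartin(\xvalto{n})$ out of the upper expectation using coherence [\ref{it:lex:homo:upper}], reducing everything to $\xtoinulocal{-\predictable(\xvalto{n})\Delta\submartin(\xvalto{n})}{n} \le 0$, which holds because $\predictable(\xvalto{n})$ is a non-negative constant at the fixed situation $\xvalto{n}$ and $\submartin$ is a submartingale (so $\xtoinllocal{\Delta\submartin(\xvalto{n})}{n} \ge 0$, equivalently $\xtoinulocal{-\Delta\submartin(\xvalto{n})}{n} \le 0$).

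\textbf{The exponential bound.} For part (ii), I would take logarithms: $\log\process_\submartin(\xvalto{n}) = \sum_{k=0}^{n-1}\log\bigl[1 - \xi\predictable(\xvalto{k})\Delta\submartin(\xvalto{k})(\xval{k+1})\bigr]$. The standard trick is to bound $\log(1 - x)$ from below by a quadratic. Writing $x_k \coloneqq \xi\predictable(\xvalto{k})\Delta\submartin(\xvalto{k})(\xval{k+1})$, which satisfies $|x_k| \le \xi B = \frac{\epsilon}{2B} < \frac12$ under the choice $\xi = \frac{\epsilon}{2B^2}$ with $0 < \epsilon < B$, I would invoke an elementary inequality of the form $\log(1-x) \ge -x - x^2$ valid for $|x| \le \frac12$. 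Summing gives $\log\process_\submartin(\xvalto{n}) \ge -\sum_k x_k - \sum_k x_k^2$. The linear term is $-\xi\sum_k \predictable(\xvalto{k})\Delta\submartin(\xvalto{k})(\xval{k+1}) = -\xi\,\summed{\predictable}(\xvalto{n})\pavg{\submartin}(\xvalto{n})$, which the hypothesis $\pavg{\submartin}(\xvalto{n}) \le -\epsilon$ bounds below by $\xi\epsilon\,\summed{\predictable}(\xvalto{n})$. For the quadratic term, since $\predictable \in \{0,1\}$ is idempotent and $|\Delta\submartin| \le B$, each $x_k^2 \le \xi^2 \predictable(\xvalto{k}) B^2$, so $\sum_k x_k^2 \le \xi^2 B^2\,\summed{\predictable}(\xvalto{n})$. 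Combining, $\log\process_\submartin(\xvalto{n}) \ge (\xi\epsilon - \xi^2 B^2)\summed{\predictable}(\xvalto{n})$, and substituting $\xi = \frac{\epsilon}{2B^2}$ makes the coefficient $\frac{\epsilon^2}{2B^2} - \frac{\epsilon^2}{4B^2} = \frac{\epsilon^2}{4B^2}$, which is exactly the claimed exponent.

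\textbf{The main obstacle.} The only genuinely delicate point is securing the quadratic lower bound for the logarithm on the correct range. I want $\log(1-x) \ge -x - x^2$, and I must verify that $|x_k| \le \frac12$ throughout so the inequality applies uniformly across all factors. This is where the specific constraint $0 < \epsilon < B$ and the choice $\xi = \frac{\epsilon}{2B^2}$ earn their keep: they guarantee $|x_k| \le \xi B = \frac{\epsilon}{2B} < \frac12$, comfortably inside the region where the quadratic estimate holds. Everything else—the telescoping of the linear sum into $\summed{\predictable}\cdot\pavg{\submartin}$, the idempotence of $\predictable$ controlling the quadratic sum, and the final algebraic substitution—is routine bookkeeping once this analytic estimate is in place.
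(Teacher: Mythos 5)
Your proposal is correct and follows essentially the same route as the paper's proof: positivity of each factor from $\xi B<1$, the identity $\Delta\process_\submartin(\xvalto{n})=-\xi\process_\submartin(\xvalto{n})\predictable(\xvalto{n})\Delta\submartin(\xvalto{n})$ combined with non-negative homogeneity to get the supermartingale property, and the quadratic lower bound for the logarithm on $\abs{x}<\nicefrac{1}{2}$ to obtain the exponential estimate. The only cosmetic difference is that you phrase the martingale step via the upper expectation and \ref{it:lex:homo:upper} where the paper works with the conjugate lower expectation and \ref{it:lex:homo}; these are equivalent.
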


\begin{proof}
$\process_\submartin(\init)=1$ trivially.
To prove that $\process_\submartin$ is positive, consider any $n\in\nats$ and any $\xvalto{n}\in\statesto{n}$.
Since it follows from $0<\xi B<1$, $\vert\Delta\submartin\vert\leq B$ and $\predictable\in\{0,1\}$ that $1-\xi\predictable(\xvalto{k})\Delta\submartin(\xvalto{k})(\xval{k+1})\geq 1-\xi B>0$ for all $0\leq k\leq n-1$, we see that indeed:
\begin{equation*}
\process_\submartin(\xvalto{n})
=\prod_{k=0}^{n-1}\sqgroup[\big]{1-\xi\predictable(\xvalto{k})\Delta\submartin(\xvalto{k})(\xval{k+1})}
>0.
\end{equation*}

Consider any $n\in\natswithzero$ and any $\xvalto{n}\in\statesto{n}$.
For any $\xval{n+1}\in\states{n+1}$:
\begin{align*}
-\Delta\process_\submartin(\xvalto{n})(\xval{n+1})
&=\process_\submartin(\xvalto{n})-\process_\submartin(\xvalto{n+1})\\
&=\xi\predictable(\xvalto{n})\Delta\submartin(\xvalto{n})(\xval{n+1})
\prod_{k=0}^{n-1} 
\sqgroup[\big]{1-\xi\predictable(\xvalto{k})\Delta\submartin(\xvalto{k})(\xval{k+1})}\\
&=\xi\process_\submartin(\xvalto{n})\predictable(\xvalto{n})\Delta\submartin(\xvalto{n})(\xval{n+1}),
\end{align*}
implying that $-\Delta\process_\submartin(\xvalto{n})=\xi\predictable(\xvalto{n})\process_\submartin(\xvalto{n})\Delta\submartin(\xvalto{n})$.
Since $\process_\submartin(\xvalto{n})>0$, $\xi>0$ and $\summed{\predictable}(\xvalto{n})\in\{0,1\}$, it follows directly from $\xtoinllocal{\Delta\submartin(\xvalto{n})}{n}\geq0$ and the non-negative homogeneity property [\ref{it:lex:homo}] of a lower expectation that $\xtoinllocal{-\Delta\process_\submartin(\xvalto{n})}{n}\geq0$---implying that $\process_\submartin$ is a supermartingale. 

For the second statement, consider any $0<\epsilon<B$ and let $\xi\coloneqq\frac{\epsilon}{2B^2}$. 
Then for any $n\in\natswithzero$ and $\xvalto{n}\in\statesto{n}$ such that $\pavg{\submartin}(\xvalto{n})\leq-\epsilon$, we have for all real $K$:
\begin{align}
\process_\submartin(\xvalto{n})\geq\exp(K) 
&\ifandonlyif
\prod_{k=0}^{n-1} 
\sqgroup[\big]{1-\xi\predictable(\xvalto{k})\Delta\submartin(\xvalto{k})(\xval{k+1})}\geq\exp(K)\notag\\
&\ifandonlyif
\sum_{k=0}^{n-1} 
\ln\sqgroup[\big]{1-\xi\predictable(\xvalto{k})\Delta\submartin(\xvalto{k})(\xval{k+1})}\geq K.\label{eq:K}
\end{align}
Since $\vert\Delta\submartin\vert\leq B$, $\predictable\in\{0,1\}$ and $0<\epsilon<B$, we know that
\begin{equation*}
-\xi\predictable(\xvalto{k})\Delta\submartin(\xvalto{k})
\geq-\xi B
=-\frac{\epsilon}{2B}
>-\frac{1}{2}
\quad\text{ for $0\leq k\leq n-1$}. 
\end{equation*} 
As $\ln(1+x)\ge x-x^2$ for $x>-\frac{1}{2}$, this allows us to infer that
\begin{multline*}
\sum_{k=0}^{n-1} 
\ln\sqgroup[\big]{1-\xi\predictable(\xvalto{k})\Delta\submartin(\xvalto{k})(\xval{k+1})}\\
\begin{aligned}
&\geq
\sum_{k=0}^{n-1} 
\sqgroup[\big]{-\xi\predictable(\xvalto{k})\Delta\submartin(\xvalto{k})(\xval{k+1})
-\xi^2\predictable(\xvalto{k})^2(\Delta\submartin(\xvalto{k})(\xval{k+1}))^2}\\
&=
-\xi\summed{\predictable}(\xvalto{n})\pavg{\submartin}(\xvalto{n})
-\xi^2\sum_{k=0}^{n-1} 
\predictable(\xvalto{k})(\Delta\submartin(\xvalto{k})(\xval{k+1}))^2\\
&\geq\xi\summed{\predictable}(\xvalto{n})\epsilon-\xi^2\summed{\predictable}(\xvalto{n})B^2
=\summed{\predictable}(\xvalto{n})\xi(\epsilon-\xi B^2)
=\summed{\predictable}(\xvalto{n})\frac{\epsilon^2}{4B^2},
\end{aligned}
\end{multline*}
where the first equality holds because $\predictable^2=\predictable$. 
Now choose $K\coloneqq\summed{\predictable}(\xvalto{n})\frac{\epsilon^2}{4B^2}$ in Equation~\eqref{eq:K}.
\end{proof}

\section{Properties of the global models}\label{sec:properties:of:global:models}
In this section, we first consider the extension to extended real variables of the global lower and upper expectations introduced in Section~\ref{sec:global:models}, and then prove a number of very general and useful results for these extensions. 
Indeed, for a number of results and applications, it will be useful to extend the global models, introduced in formulae~\eqref{eq:global:lower:gambles} and~\eqref{eq:global:upper:gambles}, from bounded real variables (gambles on $\pths$) to extended ones; see for example the discussion in Section~\ref{sec:transition:and:return:times}, where we discuss transition and return times, which are unbounded and may even become infinite.

Nevertheless, it should be stressed here that most of the discussion in this paper  deals only with bounded real variables.
In particular, our results on ergodic theorems in Sections~\ref{sec:interesting:equality} and~\ref{sec:perron:frobenius} do not rely on this extension.

We begin by proving alternative expressions for the global models for gambles.

\begin{proposition}\label{prop:global:models:gambles:alternative}
For any gamble $g$ on $\pths$, and any situation $s\in\sits$:
\begin{align}
\sitinlglobal{g}{s}
&=\sup\cset{\submartin(s)}
{\submartin\in\basubmartins\text{ and }\limsup\submartin(\pth)\leq g(\pth)\text{ for all $\pth\in\exact{s}$}}
\label{eq:global:lower:gambles:alternative}\\
\sitinuglobal{g}{s}
&=\inf\cset{\supermartin(s)}
{\supermartin\in\bbsupermartins\text{ and }\liminf\supermartin(\pth)\geq g(\pth)\text{ for all $\pth\in\exact{s}$}}.
\label{eq:global:upper:gambles:alternative}
\end{align}
\end{proposition}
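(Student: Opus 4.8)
\emph{Plan.} The two displayed equalities are conjugate. Since $\bbsupermartins=-\basubmartins$ and $\sitinuglobal{g}{s}=-\sitinlglobal{-g}{s}$, a short bookkeeping check shows that~\eqref{eq:global:upper:gambles:alternative} for $\uex$ is exactly~\eqref{eq:global:lower:gambles:alternative} for $\lex$ applied to $-g$; so I would only prove the lower formula. Moreover, because $\basubmartins\subseteq\submartins$, the supremum in~\eqref{eq:global:lower:gambles:alternative} ranges over a subset of the submartingales in the defining expression~\eqref{eq:global:lower:gambles}, hence is trivially $\leq\sitinlglobal{g}{s}$. The entire content of the proposition is the reverse inequality: every feasible submartingale can be replaced by a feasible \emph{bounded-above} one without lowering its value in $s$.

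The step that looks like the obstacle is that one \emph{cannot} simply truncate a submartingale from above: capping $\submartin$ at a constant destroys the defining inequality, precisely because it is the upward differences that are forced to have non-negative lower expectation. The way around this is to notice that the feasibility constraint already supplies the boundedness where it is needed. Fix any $\submartin\in\submartins$ with $\limsup\submartin(\pth)\leq g(\pth)$ for all $\pth\in\exact{s}$, and set $M\coloneqq\sup g<+\infty$. For every situation $u$ with $s\precedes u$ we have $\exact{u}\subseteq\exact{s}$, so Lemma~\ref{lem:very:basic:inequality} applied in $u$ gives
\begin{equation*}
\submartin(u)\leq\sup_{\pth\in\exact{u}}\limsup\submartin(\pth)\leq\sup_{\pth\in\exact{u}}g(\pth)\leq M.
\end{equation*}
Thus $\submartin$ is \emph{automatically} bounded above by $M$ on the whole sub-tree $\cset{u\in\sits}{s\precedes u}$ of situations that follow (or coincide with) $s$, including $s$ itself.

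It then only remains to repair $\submartin$ on the rest of the tree, where it may be unbounded, without touching anything above $s$. I would define
\begin{equation*}
\submartin'(u)\coloneqq
\begin{cases}
\submartin(u)&\text{if $s\precedes u$},\\
\submartin(s)&\text{otherwise}.
\end{cases}
\end{equation*}
By the bound above, $\submartin'\leq M$ everywhere, so $\submartin'\in\basubmartins$ as soon as it is a submartingale. On situations $u$ with $s\precedes u$ the differences of $\submartin'$ coincide with those of $\submartin$, so the submartingale inequality is inherited. On any other situation $u$ every child again fails to follow $s$ and carries the value $\submartin(s)$---the only possible exception being the parent of $s$, whose child $s$ carries $\submartin'(s)=\submartin(s)$ as well---so all children have the constant value $\submartin(s)=\submartin'(u)$, whence $\Delta\submartin'(u)\equiv0$ and its lower expectation is $0\geq0$. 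Hence $\submartin'$ is a submartingale.

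Finally, each path $\pth\in\exact{s}$ passes through $s$, so $\submartin'(\pth^{n})=\submartin(\pth^{n})$ for all $n$ beyond the length of $s$; the tail quantity $\limsup\submartin'(\pth)$ therefore equals $\limsup\submartin(\pth)\leq g(\pth)$, and $\submartin'$ is feasible for~\eqref{eq:global:lower:gambles:alternative}. As $\submartin'(s)=\submartin(s)$, the supremum in~\eqref{eq:global:lower:gambles:alternative} is at least $\submartin(s)$, and taking the supremum over all feasible $\submartin$ yields the reverse inequality, hence equality. The one genuinely substantive point is the automatic boundedness from Lemma~\ref{lem:very:basic:inequality}: it is what lets us avoid value-truncation altogether and confine all surgery to the part of the tree that is irrelevant to both the constraint and the value in $s$.
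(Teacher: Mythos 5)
Your proof is correct and follows essentially the same route as the paper's: the easy inequality from $\basubmartins\subseteq\submartins$, then Lemma~\ref{lem:very:basic:inequality} to bound any feasible submartingale by $\sup g$ on the subtree following $s$, and the same ``freeze at $\submartin(s)$ outside that subtree'' modification to obtain a bounded-above submartingale with the same value at $s$ and the same tail behaviour on $\exact{s}$. The only difference is cosmetic (you apply the lemma before the modification rather than after), so nothing further is needed.
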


\begin{proof}
We only give the proof for the lower expectations, as the proof for the upper expectations is completely similar.
If we denote the right-hand side in Equation~\eqref{eq:global:lower:gambles:alternative} by $\altsitinlglobal{g}{s}$, then it follows trivially from $\basubmartins\subseteq\submartins$ that $\sitinlglobal{g}{s}\geq\altsitinlglobal{g}{s}$, so we concentrate on proving the converse inequality $\sitinlglobal{g}{s}\leq\altsitinlglobal{g}{s}$.

If $\sitinlglobal{g}{s}=-\infty$ then this inequality is trivially satisfied,\footnote{Note, by the way, that it will follow from~\ref{it:elex:bounds} in Proposition~\ref{prop:properties:of:global:expectations} that this cannot actually happen.} so we may assume without loss of generality that there is some $\submartin\in\submartins$ such that $\limsup\submartin(s\andpath)\leq g(s\andpath)$.
Consider any such submartingale $\submartin$ for which also $\submartin(t)=\submartin(s)$ in any situation $t$ that does not follow $s$ [to see that such a submartingale exists, simply consider that if we change the values $\submartin'(t)$ of any submartingale $\submartin'$ to $\submartin'(s)$ in such situations $t$, the result is still a submartingale]. It then follows from Lemma~\ref{lem:very:basic:inequality} that 
\begin{equation*}
\submartin(v)\leq
\sup_{\pth\in\exact{s}}
\limsup\submartin(\pth)
\leq
\sup_{\pth\in\exact{s}}
g(\pth)
\leq
\sup g
\end{equation*}
for all situations $v$ that follow $s$, and, in particular, that $\submartin(s)\leq\sup g$. 
For any situation that does not follow $s$, this implies that $\submartin(v)=\submartin(s)\leq\sup g$. Hence, $\submartin(v)\leq\sup g$ for all $v\in\sits$. 
Since $\sup g\in\reals$ because $g$ is a gamble and therefore bounded, this implies that $\submartin\in\basubmartins$, and the proof is complete.
\end{proof}

If we now simply replace the gambles `$g$' in Equations~\eqref{eq:global:lower:gambles} and~\eqref{eq:global:lower:gambles:alternative} by extended real variables `$f$', we get two obvious candidate definitions for the conditional lower expectation $\sitinlglobal{f}{s}$ of such $f$. 
The following example shows that the first candidate, which seems to be the one suggested by Shafer and Vovk in their earlier work \cite[Chapter~8.3]{shafer2001}, may have a rather undesirable property.

\begin{example}\label{ex:undesirable:behaviour}
Consider the precise probability tree that corresponds to repeatedly flipping a fair coin, where all coin flips are independent.
That is, let $\states{k}\coloneqq\{0,1\}$ for all $k\in\nats$ and let
\begin{equation}\label{eq:faircointree}
\xtoinllocal{h}{n}
=\xtoinulocal{h}{n}
\coloneqq\frac{1}{2}h(0)+\frac{1}{2}h(1)
\text{ for all $n\in\natswithzero$, $h\in\gambleson{n+1}$ and $\xvalto{n}\in\statesto{n}$.}
\end{equation}
For any real $\alpha>0$, we consider a corresponding real process $\submartin_\alpha$, defined by $\submartin_\alpha(\init)\coloneqq\alpha$, $\Delta\submartin_\alpha(\init)\coloneqq 2\alpha(\indsing{0}-\indsing{1})$ and, for all $n\in\nats$ and $\xvalto{n}\in\statesto{n}$:
\begin{equation*}
\Delta\submartin_\alpha(\xvalto{n})\coloneqq
\begin{cases}
3\alpha 2^{n-1}(\indsing{0}-\indsing{1})
&\text{ if $x_k=0$ for all $k\in\{1,\dots,n\}$}\\
\alpha 2^{n-1}(\indsing{0}-\indsing{1})
&\text{ if $x_k=1$ for all $k\in\{1,\dots,n\}$}\\
0
&\text{ otherwise.}
\end{cases}
\end{equation*}
It follows trivially from Equation~\eqref{eq:faircointree} that this real process is both a sub- and a supermartingale, and therefore also a martingale.
For any given natural $n\geq2$ and $\xvalto{n}\in\statesto{n}$, we now set out to find a closed-form expression for $\submartin_\alpha(\xvalto{n})$. 
We consider two cases: $x_1=0$ and $x_1=1$. If $x_1=0$, then there is at least one $i\in\{1,\dots,n\}$ such that $x_k=0$ for all $k\in\{1,\dots,i\}$.
Let $i_{\max}$ be the largest such $i\in\{1,\dots,n\}$, and let $i^*\coloneqq\min\{i_{\max},n-1\}$. 
Then 
\begin{align*}
\submartin_\alpha(\xvalto{n})
&=\submartin_\alpha(\init)
+\sum_{k=0}^{n-1}\Delta\submartin_\alpha(\xvalto{k})(\xval{k+1})\\
&=\submartin_\alpha(\init)
+\Delta\submartin_\alpha(\init)(0)
+\sum_{k=1}^{i^*-1}\Delta\submartin_\alpha(\xvalto{k})(\xval{k+1})
+\Delta\submartin_\alpha(\xvalto{i^*})(\xval{i^*+1})\\
&=\alpha
+2\alpha
+\sum_{k=1}^{i^*-1}3\alpha2^{k-1}
+3\alpha2^{i^*-1}\big(\indsing{0}(\xval{i^*+1})-\indsing{1}(\xval{i^*+1})\big)\\
&=3\alpha2^{i^*-1}+3\alpha2^{i^*-1}\big(\indsing{0}(\xval{i^*+1})-\indsing{1}(\xval{i^*+1})\big)
=3\alpha2^{i^*}\indsing{0}(\xval{i^*+1}).
\end{align*}
If $x_1=1$, then there is at least one $j\in\{1,\dots,n\}$ such that $x_k=1$ for all $k\in\{1,\dots,j\}$. 
Let $j_{\max}$ be the largest such $j\in\{1,\dots,n\}$, and let $j^*\coloneqq\min\{j_{\max},n-1\}$. 
Then, using an argument similar to the one for the case $x_1=0$, we find that
\begin{align*}
\submartin_\alpha(\xvalto{n})
&=\alpha
-2\alpha
-\sum_{k=1}^{j^*-1}\alpha2^{k-1}
+\alpha2^{j^*-1}\big(\indsing{0}(\xval{j^*+1})-\indsing{1}(\xval{j^*+1})\big)\\
&=-\alpha2^{j^*-1}+\alpha2^{j^*-1}\big(\indsing{0}(\xval{j^*+1})-\indsing{1}(\xval{j^*+1})\big)
=-\alpha2^{j^*}\indsing{1}(\xval{j^*+1}).
\end{align*}
Since $x_{i^*+1}=0$ if and only if $i_{\max}=n$, and similarly, $x_{j^*+1}=1$ if and only if $j_{\max}=n$, we can combine the two cases above to find that, for all natural $n\geq2$ and $\xvalto{n}\in\statesto{n}$:
\begin{equation*}
\submartin_\alpha(\xvalto{n})
=
\begin{cases}
3\alpha2^{n-1}
&\text{ if $x_k=0$ for all $k\in\{1,\dots,n\}$}\\
-\alpha2^{n-1}
&\text{ if $x_k=1$ for all $k\in\{1,\dots,n\}$}\\
0
&\text{ otherwise.}
\end{cases}
\end{equation*}
Now let $f$ be the extended real variable that is defined by
\begin{equation*}
f(\pth)\coloneqq
\begin{cases}
+\infty & \text{if $\pth=000000 \dots$} \\
-\infty & \text{if $\pth=111111 \dots$} \\
0 & \text{otherwise}
\end{cases}
\qquad\text{for all $\pth\in\pths$.}
\end{equation*}
Then clearly $\liminf\submartin_\alpha=\limsup\submartin_\alpha=\lim\submartin_\alpha=f$. 

We conclude from all of the above that, for any $\alpha>0$, we can construct a submartingale $\submartin_\alpha\in\submartins$ such that $\submartin_\alpha(\init)=\alpha$ and $\limsup\submartin_\alpha\leq f$.
Therefore, if we were to apply Equations~\eqref{eq:global:lower:gambles} and~\eqref{eq:global:upper:gambles} to $f$, with $s=\init$, we would find that
\begin{equation*}
\ljoint(f)
=
\sup\cset{\submartin(\init)}
{\submartin\in\submartins\text{ and }\limsup\submartin\leq f}=+\infty
\end{equation*}
and that $\ujoint(f)=-\ljoint(-f)=-\ljoint(f)=-\infty$, where the first equality follows from conjugacy and the second equality follows from a symmetry argument: $\ljoint(-f)$ is equal to $\ljoint(f)$ because exchanging zeroes and ones in the tree (a) turns $f$ into $-f$, and (b) leaves the probability tree unchanged. 
We conclude that if we were to apply Equations~\eqref{eq:global:lower:gambles} and~\eqref{eq:global:upper:gambles} to the extended real variable $f$, we would find that $+\infty=\ljoint(f)>\ujoint(f)=-\infty$. 
We consider this to be undesirable: any reasonable definition of lower and upper expectation should at the very least guarantee that a lower expectation can never exceed the corresponding upper expectation.\qed
\end{example}

This leaves us with the second candidate formula for extension, which is the one we will use in this paper, and which is related to the one used in more recent work by Shafer and Vovk~\cite[Section~2]{shafer2012:zero-one}:
\begin{align}
\sitinlglobal{f}{s}
\coloneqq&\sup\cset{\submartin(s)}
{\submartin\in\basubmartins\text{ and }\limsup\submartin(\pth)\leq f(\pth)\text{ for all $\pth\in\exact{s}$}}
\label{eq:global:lower:extended}\\
\sitinuglobal{f}{s}
\coloneqq&\inf\cset{\supermartin(s)}
{\supermartin\in\bbsupermartins\text{ and }\liminf\supermartin(\pth)\geq f(\pth)\text{ for all $\pth\in\exact{s}$}}
\label{eq:global:upper:extended}\\
=&-\sitinlglobal{-f}{s},\notag
\end{align}
where $f$ is any extended real variable, and $s\in\sits$ any situation.
We will see further on in Proposition~\ref{prop:properties:of:global:expectations} [in particular~\ref{it:elex:more:bounds}] that this definition does not lead to the undesirable behaviour that Example~\ref{ex:undesirable:behaviour} warns us about.

To investigate the properties of these extended global models, we first look at their behaviour on $n$-measurable extended real variables.

\begin{proposition}\label{prop:finite:horizon}
For any situation $\xvalto{m}\in\sits$ and any $n$-measurable extended real variable~$f$, with $n,m\in\natswithzero$ such that $n\geq m$:\footnote{In these expressions, $\submartins$ may be replaced by $\basubmartins$, and $\supermartins$ by $\bbsupermartins$, the proof remains essentially the same.}
\begin{align*}
\xtoinlglobal{f}{m}
&=\sup\cset{\submartin(\xvalto{m})}
{\submartin\in\submartins
\text{ and }
(\forall\xvalfromto{m+1}{n}\in\statesfromto{m+1}{n})\submartin(\xvalto{n})\leq f(\xvalto{n})}\\
\xtoinuglobal{f}{m}
&=\inf\cset{\supermartin(\xvalto{m})}
{\supermartin\in\supermartins
\text{ and }
(\forall\xvalfromto{m+1}{n}\in\statesfromto{m+1}{n})\supermartin(\xvalto{n})\geq f(\xvalto{n})}.
\end{align*}
\end{proposition}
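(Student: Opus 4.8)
The plan is to prove the identity for the lower expectation $\xtoinlglobal{f}{m}$; the upper-expectation statement then follows by conjugacy, since $\xtoinuglobal{f}{m}=-\xtoinlglobal{-f}{m}$ and the constraint $\supermartin(\xvalto{n})\geq f(\xvalto{n})$ is precisely $-\supermartin(\xvalto{n})\leq-f(\xvalto{n})$ with $-\supermartin\in\submartins$. Write $L\coloneqq\xtoinlglobal{f}{m}$, as given by Equation~\eqref{eq:global:lower:extended} at $s=\xvalto{m}$, and let $R$ denote the right-hand side of the claimed formula. I would establish $L\leq R$ and $R\leq L$ separately, each time by transporting a feasible process from one description to the other without changing its value at $\xvalto{m}$.

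For $L\leq R$, I would show that every process feasible for $L$ is already feasible for $R$. Take any $\submartin\in\basubmartins$ with $\limsup\submartin(\pth)\leq f(\pth)$ for all $\pth\in\exact{\xvalto{m}}$, and fix any extension $\xvalfromto{m+1}{n}\in\statesfromto{m+1}{n}$. Since $n\geq m$, every $\pth\in\exact{\xvalto{n}}$ also lies in $\exact{\xvalto{m}}$, and since $f$ is $n$-measurable it equals the constant $f(\xvalto{n})$ on $\exact{\xvalto{n}}$. Applying Lemma~\ref{lem:very:basic:inequality} at the situation $\xvalto{n}$ then gives
\[
\submartin(\xvalto{n})
\leq\sup_{\pth\in\exact{\xvalto{n}}}\limsup\submartin(\pth)
\leq\sup_{\pth\in\exact{\xvalto{n}}}f(\pth)
=f(\xvalto{n}),
\]
so $\submartin$ satisfies the finite-horizon constraint; as $\basubmartins\subseteq\submartins$, it is feasible for $R$ with the same value $\submartin(\xvalto{m})$. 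Taking suprema yields $L\leq R$, with the degenerate case (empty feasible set, both sides $-\infty$) handled automatically.

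For the converse $R\leq L$, I would turn an $R$-feasible process into an $L$-feasible one by \emph{freezing} it at time $n$. Given any $\submartin\in\submartins$ with $\submartin(\xvalto{n})\leq f(\xvalto{n})$ for all extensions, define $\submartin'$ by $\submartin'(s)\coloneqq\submartin(s)$ whenever $\len(s)\leq n$, and $\submartin'(s)\coloneqq\submartin(t)$ whenever $\len(s)>n$, where $t\precedes s$ is the unique situation with $\len(t)=n$. Then $\Delta\submartin'$ coincides with $\Delta\submartin$ before time $n$ and vanishes identically from time $n$ onward, so $\submartin'$ is again a submartingale everywhere (using $\llocal{0}{\cdot}=0$, i.e.\ coherence [\ref{it:lex:constant:additivity}]). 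Because the state sets are finite, $\submartin'$ assumes only the finitely many values of $\submartin$ on situations of length at most $n$, hence $\submartin'\in\basubmartins$. For any $\pth\in\exact{\xvalto{m}}$ the sequence $\submartin'(\pth^{\ell})$ is eventually constant, equal to $\submartin(\pth^{n})$, so $\limsup\submartin'(\pth)=\submartin(\pth^{n})\leq f(\pth^{n})=f(\pth)$, the last equality by $n$-measurability. Thus $\submartin'$ is feasible for $L$ and $\submartin'(\xvalto{m})=\submartin(\xvalto{m})$, giving $R\leq L$.

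Combining the two inequalities yields $L=R$. I expect the only genuinely delicate point to be the $R\leq L$ direction: one must verify that the frozen process is a \emph{bona fide} submartingale at \emph{every} situation (not merely those following $\xvalto{m}$) and that it is bounded above, both of which rest on the finiteness of the state sets and on coherence, whereas the remaining steps are bookkeeping with $n$-measurability and Lemma~\ref{lem:very:basic:inequality}. Since the process produced in the second step lies in $\basubmartins$ and the one used in the first step is already in $\basubmartins$, the same argument shows that $\submartins$ may everywhere be replaced by $\basubmartins$ (and $\supermartins$ by $\bbsupermartins$), as asserted in the footnote.
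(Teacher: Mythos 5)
Your proposal is correct and follows essentially the same route as the paper's proof: one inequality by freezing an $R$-feasible submartingale at time $n$ (the paper freezes below $\xvalto{m}$ and flattens elsewhere, but the effect—finitely many values, hence bounded above, with unchanged value at $\xvalto{m}$—is the same), and the other by applying Lemma~\ref{lem:very:basic:inequality} at $\xvalto{n}$ together with $n$-measurability. Your additional remarks on conjugacy and on replacing $\submartins$ by $\basubmartins$ match the paper's footnote and its "completely similar" treatment of the upper expectation.
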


\begin{proof}
We sketch the idea of the proof of the equality for the lower expectations; the proof for the upper expectations is completely similar.
Denote, for simplicity of notation, the right-hand side of the first equality by $R$.

First, consider any submartingale $\submartin$ such that $\submartin(\xvalto{n})\leq f(\xvalto{n})$ for all $\xvalfromto{m+1}{n}\in\statesfromto{m+1}{n}$.
Consider the submartingale $\submartin'$ derived from $\submartin$ by keeping it constant as soon as any situation in $\set{\xvalto{m}}\times\statesfromto{m+1}{n}$ is reached and letting $\submartin'(t)=\submartin(s)$ for any situation $t$ that does not follow $s$, then clearly $\submartin'$ is bounded above, $\limsup\submartin'(\pth)\leq f(\pth)$ for all $\pth\in\exact{\xvalto{m}}$, and $\submartin(\xvalto{m})=\submartin'(\xvalto{m})$.
Hence it follows from Equation~\eqref{eq:global:lower:extended} that $\submartin(\xvalto{m})\leq\xtoinlglobal{f}{m}$, whence also $R\leq\xtoinlglobal{f}{m}$.

For the converse inequality, consider any bounded above submartingale $\submartin$ such that $\limsup\submartin(\pth)\leq f(\pth)$ for all $\pth\in\exact{\xvalto{m}}$.
Fix any $\xvalfromto{m+1}{n}$, then it follows from the $n$-measurability of $f$ that $\limsup\submartin(\pth)\leq f(\xvalto{n})$ for all $\pth\in\exact{\xvalto{n}}$, whence 
\begin{equation*}
\submartin(\xvalto{n})
\leq\sup_{\pth\in\exact{\xvalto{n}}}
\limsup\submartin(\pth)
\leq f(\xvalto{n}),
\end{equation*}
where the first inequality follows from Lemma~\ref{lem:very:basic:inequality} with $s\coloneqq\xvalto{n}$.
This implies that $\submartin(\xvalto{m})\leq R$, and therefore also $\xtoinlglobal{f}{m}\leq R$.
\end{proof}

\begin{corollary}\label{cor:finite:horizon}
For any situation $\xvalto{m}\in\sits$ and any $n$-measurable extended real variable~$f$, with $n,m\in\natswithzero$ such that $n\geq m$:
\begin{align*}
\xtoinlglobal{f}{m}
&=\sup\cset{\xtoinlglobal{g}{m}}
{g\in\gamblesto{n}
\text{ and }
(\forall\xvalfromto{m+1}{n}\in\statesfromto{m+1}{n})g(\xvalto{n})\leq f(\xvalto{n})}\\
\xtoinuglobal{f}{m}
&=\inf\cset{\xtoinuglobal{g}{m}}
{g\in\gamblesto{n}
\text{ and }
(\forall\xvalfromto{m+1}{n}\in\statesfromto{m+1}{n})g(\xvalto{n})\geq f(\xvalto{n})}.
\end{align*}
\end{corollary}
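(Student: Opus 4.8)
The plan is to prove the equality for the lower expectation; the statement for the upper expectation then follows by an entirely analogous argument (or by conjugacy, since $\xtoinuglobal{f}{m}=-\xtoinlglobal{-f}{m}$). The strategy is to compare both sides through their submartingale representations: I would represent the left-hand side $\xtoinlglobal{f}{m}$ by means of Proposition~\ref{prop:finite:horizon}, and each term $\xtoinlglobal{g}{m}$ appearing on the right-hand side by means of Proposition~\ref{prop:finite:horizon:gambles}. In this way the corollary becomes a statement about rearranging a single supremum over submartingales.

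For the inequality `$\geq$', I would fix any gamble $g\in\gamblesto{n}$ with $g(\xvalto{n})\leq f(\xvalto{n})$ for all $\xvalfromto{m+1}{n}\in\statesfromto{m+1}{n}$. Applying Proposition~\ref{prop:finite:horizon:gambles} to $g$ expresses $\xtoinlglobal{g}{m}$ as a supremum of values $\submartin(\xvalto{m})$ over submartingales $\submartin\in\submartins$ with $\submartin(\xvalto{n})\leq g(\xvalto{n})$ for all $\xvalfromto{m+1}{n}$. Since $g(\xvalto{n})\leq f(\xvalto{n})$, each such $\submartin$ also satisfies $\submartin(\xvalto{n})\leq f(\xvalto{n})$, and is therefore admissible in the representation of $\xtoinlglobal{f}{m}$ furnished by Proposition~\ref{prop:finite:horizon}; hence $\submartin(\xvalto{m})\leq\xtoinlglobal{f}{m}$. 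Taking the supremum over these $\submartin$ yields $\xtoinlglobal{g}{m}\leq\xtoinlglobal{f}{m}$, and taking the supremum over all admissible $g$ then gives the inequality.

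For the converse `$\leq$', I would begin from the representation of $\xtoinlglobal{f}{m}$ in Proposition~\ref{prop:finite:horizon} and pick any submartingale $\submartin\in\submartins$ with $\submartin(\xvalto{n})\leq f(\xvalto{n})$ for all $\xvalfromto{m+1}{n}$. The key step is to define a gamble $g\in\gamblesto{n}$ by $g(\xvalto{n})\coloneqq\submartin(\xvalto{n})$. Then $g(\xvalto{n})\leq f(\xvalto{n})$ by the choice of $\submartin$, so $g$ is admissible on the right-hand side. Applying Proposition~\ref{prop:finite:horizon:gambles} to this $g$, the submartingale $\submartin$ is itself a witness---it satisfies $\submartin(\xvalto{n})\leq g(\xvalto{n})$, with equality---so $\submartin(\xvalto{m})\leq\xtoinlglobal{g}{m}$, and since $g$ is admissible this does not exceed the right-hand supremum. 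Taking the supremum over all such $\submartin$, which by Proposition~\ref{prop:finite:horizon} equals $\xtoinlglobal{f}{m}$, closes the argument.

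The only delicate point---and the main, if modest, obstacle---is to verify that the $g$ constructed in the second direction is a genuine gamble, that is, bounded. This is immediate: each state set $\states{k}$ is finite, so $\statesto{n}$ is a finite set and every real-valued function on it is automatically bounded. Hence $g(\xvalto{n})=\submartin(\xvalto{n})$ does define an element of $\gamblesto{n}$, and the two inequalities combine to yield the equality for the lower expectation; the upper-expectation case then follows symmetrically.
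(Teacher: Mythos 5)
Your proposal is correct and follows essentially the same route as the paper's own proof: the first inequality via the observation that any submartingale admissible for $g$ is admissible for $f$, and the converse via defining the $n$-measurable gamble $g(\xvalto{n})\coloneqq\submartin(\xvalto{n})$ and noting that $\submartin$ witnesses $\submartin(\xvalto{m})\leq\xtoinlglobal{g}{m}$. Your added remark that $g$ is automatically bounded because $\statesto{n}$ is finite is a correct (if implicit in the paper) detail.
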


\begin{proof}
We give the proof for the lower expectations; the proof for the upper expectations is completely similar.
Denote the right-hand side of the first equality by $R$, for notational simplicity. 
It follows at once from Proposition~\ref{prop:finite:horizon} that $\xtoinlglobal{g}{m}\leq\xtoinlglobal{f}{m}$ for all $g\in\gamblesto{n}$ such that $g(\xvalto{m},\xvalfromto{m+1}{n})\leq f(\xvalto{m},\xvalfromto{m+1}{n})$ for all $\xvalfromto{m+1}{n}\in\statesfromto{m+1}{n}$, and therefore also $R\leq\xtoinlglobal{f}{m}$.
Conversely, consider any submartingale $\submartin$ such that $\submartin(\xvalto{m},\xvalfromto{m+1}{n})\leq f(\xvalto{m},\xvalfromto{m+1}{n})$ for all $\xvalfromto{m+1}{n}\in\statesfromto{m+1}{n}$.
If we define the $n$-measurable gamble $g$ on $\pths$ by letting $g(\xvalto{n})\coloneqq\submartin(\xvalto{n})$ for all $\xvalto{n}\in\statesto{n}$, then it follows from Proposition~\ref{prop:finite:horizon} that $\submartin(\xvalto{m})\leq\xtoinlglobal{g}{m}$, and since by assumption $g(\xvalto{m},\xvalfromto{m+1}{n})\leq f(\xvalto{m},\xvalfromto{m+1}{n})$ for all $\xvalfromto{m+1}{n}\in\statesfromto{m+1}{n}$, that $\xtoinlglobal{g}{m}\leq R$.
Hence $\submartin(\xvalto{m})\leq R$, and therefore, by Proposition~\ref{prop:finite:horizon}, $\xtoinlglobal{f}{m}\leq R$.
\end{proof}

The following result extends Corollary~\ref{cor:global:and:local:gambles}. 

\begin{corollary}\label{cor:global:and:local}
Consider any $n\in\natswithzero$, any situation $\xvalto{n}\in\sits$ and any $(n+1)$-measurable extended real variable $f$. 
Then
\begin{align*}
\xtoinlglobal{f}{n}
&=\sup\cset{\xtoinllocal{h}{n}}{h\in\gambles\text{ and }h\leq f(\xvalto{n}\andstate)}\\
\xtoinuglobal{f}{n}
&=\inf\cset{\xtoinulocal{h}{n}}{h\in\gambles\text{ and }h\geq f(\xvalto{n}\andstate)}.
\end{align*}
\end{corollary}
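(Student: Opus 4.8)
The plan is to deduce this by composing the two immediately preceding results. Corollary~\ref{cor:finite:horizon} rewrites the global lower expectation of an $(n+1)$-measurable extended real variable as a supremum over dominated $(n+1)$-measurable \emph{gambles}, and Corollary~\ref{cor:global:and:local:gambles} collapses the global lower expectation of such a gamble into a purely local lower expectation; chaining the two is essentially all that is needed. I would prove the statement for the lower expectation and obtain the upper version by conjugacy, since $\xtoinuglobal{f}{n}=-\xtoinlglobal{-f}{n}$ and $\xtoinulocal{h}{n}=-\xtoinllocal{-h}{n}$, and substituting $-h$ for $h$ turns the constraint $h\geq f(\xvalto{n}\andstate)$ into $-h\leq-f(\xvalto{n}\andstate)$.

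First I would apply Corollary~\ref{cor:finite:horizon} with its ``$n$'' set to $n+1$ and its ``$m$'' set to $n$, which is legitimate since $n+1\geq n$ and $f$ is $(n+1)$-measurable. Because the conditioning situation $\xvalto{n}$ is held fixed, the quantifier $(\forall\xvalfromto{m+1}{n})$ reduces to $(\forall\xval{n+1}\in\states{n+1})$, giving
\begin{equation*}
\xtoinlglobal{f}{n}=\sup\cset{\xtoinlglobal{g}{n}}{g\in\gamblesto{n+1}\text{ and }(\forall\xval{n+1}\in\states{n+1})g(\xvalto{n+1})\leq f(\xvalto{n+1})}.
\end{equation*}
I would then replace each objective $\xtoinlglobal{g}{n}$ by $\xtoinllocal{g(\xvalto{n}\andstate)}{n}$ using Corollary~\ref{cor:global:and:local:gambles}.

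The key observation — and the only point requiring care — is that the objective $\xtoinllocal{g(\xvalto{n}\andstate)}{n}$ depends on $g$ only through the gamble $h\coloneqq g(\xvalto{n}\andstate)$ on $\state{n+1}$, and that, since the constraint likewise restricts $g$ only on the branch through $\xvalto{n}$, the assignment $g\mapsto g(\xvalto{n}\andstate)$ carries the feasible $(n+1)$-measurable gambles $g$ exactly onto the gambles $h\in\gambles$ with $h\leq f(\xvalto{n}\andstate)$. Indeed, any feasible $g$ yields such an $h$; conversely, given any gamble $h\leq f(\xvalto{n}\andstate)$ one extends it to a feasible $g$ by setting $g(\xvalto{n}\andstate)\coloneqq h$ and assigning $g$ arbitrary fixed bounded values off the branch through $\xvalto{n}$, which affect neither the objective nor the constraint. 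Hence the two suprema coincide, yielding the claimed identity. I would also note that this correspondence stays valid in the genuinely extended-real situations: if $f(\xvalto{n}\andstate)$ equals $-\infty$ somewhere, then both feasible sets are empty and both sides equal $\sup\emptyset=-\infty$, so no separate case analysis is needed.

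The hard part here is bookkeeping rather than substance: correctly performing the index shift into Corollary~\ref{cor:finite:horizon} and justifying cleanly the reduction ``$g$ matters only through $g(\xvalto{n}\andstate)$''. The latter is precisely Corollary~\ref{cor:global:and:local:gambles} combined with the fact that the dominance constraint is local to the branch through $\xvalto{n}$; no new martingale or convergence arguments are involved.
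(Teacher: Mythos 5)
Your argument is correct. It differs from the paper's proof only in how the reduction is packaged: the paper goes straight to Proposition~\ref{prop:finite:horizon} (with $m\coloneqq n$ and horizon $n+1$) and observes that a submartingale seen over the single step out of $\xvalto{n}$ is just a constant $\alpha=\submartin(\xvalto{n})$ plus a gamble $h=\Delta\submartin(\xvalto{n})$ with $\xtoinllocal{h}{n}\geq0$, after which constant additivity [\ref{it:lex:constant:additivity}] turns $\sup\cset{\alpha}{\alpha+h\leq f(\xvalto{n}\andstate),\ \xtoinllocal{h}{n}\geq0}$ into $\sup\cset{\xtoinllocal{h}{n}}{h\leq f(\xvalto{n}\andstate)}$. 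You instead compose the two already-proved corollaries — Corollary~\ref{cor:finite:horizon} to pass from the extended real variable $f$ to dominated $(n+1)$-measurable gambles $g$, and Corollary~\ref{cor:global:and:local:gambles} to collapse $\xtoinlglobal{g}{n}$ to the local value $\xtoinllocal{g(\xvalto{n}\andstate)}{n}$ — and then check that $g\mapsto g(\xvalto{n}\andstate)$ maps the feasible set onto $\cset{h\in\gambles}{h\leq f(\xvalto{n}\andstate)}$, which is fine because both the objective and the dominance constraint are local to the branch through $\xvalto{n}$. Your route avoids re-opening the submartingale definition at the cost of an extra layer of citation; the paper's is marginally more self-contained. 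Your handling of the degenerate case (empty feasible sets on both sides when $f(\xvalto{n}\andstate)$ hits $-\infty$) and the conjugacy step for the upper expectation are both sound, so there is no gap.
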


\begin{proof}
We give the proof for the lower expectation; the proof for the upper expectation is completely similar.
We infer from Proposition~\ref{prop:finite:horizon}, the argumentation above and the definition of a submartingale that, indeed,
\begin{align*}
\xtoinlglobal{f}{n}
&=\sup\cset{\alpha\in\reals}{\alpha+h\leq f(\xvalto{n}\andstate)\text{ for some $h\in\gambles$ such that $\xtoinllocal{h}{n}\geq0$}}\\
&=\sup\cset{\alpha\in\reals}{h\leq f(\xvalto{n}\andstate)\text{ for some $h\in\gambles$ such that $\alpha\leq\xtoinllocal{h}{n}$}}\\
&=\sup\cset{\xtoinllocal{h}{n}}{h\in\gambles\text{ and }h\leq f(\xvalto{n}\andstate)},
\end{align*}
where the second equality follows from coherence [\ref{it:lex:constant:additivity}].
\end{proof}

We end this section by proving three interesting and very useful results about the global models.
The first summarises and extends properties first proved by Shafer and Vovk (see for instance Refs.~\cite[Chapter~8.3]{shafer2001}, \cite[Section~2]{shafer2012:zero-one} and~\cite[Section 6.3]{vovk2014:itip}), in showing that these global models satisfy properties that extend the basic coherence axioms/properties \ref{it:lex:bounds}--\ref{it:lex:constant:additivity} for lower and upper expectations from gambles to extended real maps.
We provide, for the sake of completeness, proofs that are very close to the ones given by Shafer and Vovk \cite[Section~2]{shafer2012:zero-one}.\footnote{Our proof of~\ref{it:elex:more:bounds} corrects a small glitch in theirs. Our monotonicity property~\ref{it:elex:monotone} is stronger, because it only requires strictly almost sure, rather than point-wise, dominance.}

\begin{proposition}\label{prop:properties:of:global:expectations}
Consider any situation $s$, any extended real variables $f$ and $g$, and any real numbers $\lambda\geq0$ and $\mu$.
Then 
\begin{enumerate}[label=\upshape LE*\arabic*.,ref=\upshape LE*\arabic*,leftmargin=*]
\item\label{it:elex:bounds} $\sitinlglobal{f}{s}\geq\inf\cset{f(\pth)}{\pth\in\exact{s}}$;
\item\label{it:elex:super} $\sitinlglobal{f+g}{s}\geq\sitinlglobal{f}{s}+\sitinlglobal{g}{s}$;
\item\label{it:elex:homo} $\sitinlglobal{\lambda f}{s}=\lambda\sitinlglobal{f}{s}$;
\item\label{it:elex:monotone} if $f\leq g$ on $\exact{s}$ strictly almost surely, then $\sitinlglobal{f}{s}\leq\sitinlglobal{g}{s}$ and $\sitinuglobal{f}{s}\leq\sitinuglobal{g}{s}$; as a consequence, if $f=g$ on $\exact{s}$ strictly almost surely, then $\sitinlglobal{f}{s}=\sitinlglobal{g}{s}$ and $\sitinuglobal{f}{s}=\sitinuglobal{g}{s}$;
\item\label{it:elex:more:bounds} $\inf\cset{f(\pth)}{\pth\in\exact{s}}\leq\sitinlglobal{f}{s}\leq\sitinuglobal{f}{s}\leq\sup\cset{f(\pth)}{\pth\in\exact{s}}$;
\item\label{it:elex:constant:additivity} $\sitinlglobal{f+\mu}{s}=\sitinlglobal{f}{s}+\mu$ and $\sitinuglobal{f+\mu}{s}=\sitinuglobal{f}{s}+\mu$.
\end{enumerate}
\end{proposition}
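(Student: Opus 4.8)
The plan is to verify the six properties largely by manipulating the defining supremum in~\eqref{eq:global:lower:extended} over bounded-above submartingales, together with the conjugacy relation $\sitinuglobal{f}{s}=-\sitinlglobal{-f}{s}$, which lets me deduce every upper-expectation claim from its lower counterpart. I would dispose of the ``algebraic'' properties \ref{it:elex:bounds}, \ref{it:elex:homo}, \ref{it:elex:constant:additivity} and \ref{it:elex:super} first, each by exhibiting the right competitor submartingale. For \ref{it:elex:bounds}, the constant process equal to $\inf\cset{f(\pth)}{\pth\in\exact{s}}$ is a (bounded-above) submartingale whose $\limsup$ lies below $f$ on $\exact{s}$, giving the bound immediately. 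For \ref{it:elex:homo} with $\lambda>0$, the map $\submartin\mapsto\lambda\submartin$ is a bijection of $\basubmartins$ onto itself (by non-negative homogeneity \ref{it:lex:homo} of the local models) that turns the constraint $\limsup\submartin\leq f$ into $\limsup(\lambda\submartin)\leq\lambda f$ and scales $\submartin(s)$ by $\lambda$; the case $\lambda=0$ is handled separately using that $\sitinlglobal{0}{s}=0$, which itself follows from the constant competitor together with Lemma~\ref{lem:very:basic:inequality}. Property \ref{it:elex:constant:additivity} is analogous, with the translation $\submartin\mapsto\submartin+\mu$ (a bijection of $\basubmartins$ by \ref{it:lex:constant:additivity}). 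For \ref{it:elex:super}, given competitors $\submartin$ for $f$ and $\submartin'$ for $g$, their sum is again a bounded-above submartingale (superadditivity \ref{it:lex:super} of the local models) with $\limsup(\submartin+\submartin')\leq f+g$, so its value $\submartin(s)+\submartin'(s)$ at $s$ is admissible for $f+g$; taking the supremum yields superadditivity.

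The core of the proposition is \ref{it:elex:more:bounds}, and in particular the inner inequality $\sitinlglobal{f}{s}\leq\sitinuglobal{f}{s}$ --- the very property that Example~\ref{ex:undesirable:behaviour} shows can fail for the naive, unbounded definition. The outer bounds are easy: the left inequality is \ref{it:elex:bounds}, and the right inequality $\sitinuglobal{f}{s}\leq\sup\cset{f(\pth)}{\pth\in\exact{s}}$ is its conjugate, obtained by applying \ref{it:elex:bounds} to $-f$. For the inner inequality I would take any $\submartin\in\basubmartins$ with $\limsup\submartin\leq f$ on $\exact{s}$ and any $\supermartin\in\bbsupermartins$ with $\liminf\supermartin\geq f$ on $\exact{s}$, and consider the difference $\submartin-\supermartin$, which is a submartingale because $\submartin$ and $-\supermartin$ are both submartingales and the local lower expectations are superadditive. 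Lemma~\ref{lem:very:basic:inequality} then gives $(\submartin-\supermartin)(s)\leq\sup_{\pth\in\exact{s}}\limsup(\submartin-\supermartin)(\pth)$, and the point is to show this $\limsup$ is everywhere $\leq0$. Where $f$ is finite this is immediate from $\limsup(\submartin-\supermartin)\leq\limsup\submartin-\liminf\supermartin\leq f-f=0$; the delicate case is where $f$ is infinite, and here the boundedness is exactly what saves the day: if $f(\pth)=+\infty$ then $\supermartin\to+\infty$ while $\submartin$ stays bounded above, so $\submartin-\supermartin\to-\infty$, and symmetrically if $f(\pth)=-\infty$ then $\submartin\to-\infty$ while $\supermartin$ stays bounded below. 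Hence $\submartin(s)\leq\supermartin(s)$, and taking the supremum over $\submartin$ and the infimum over $\supermartin$ yields $\sitinlglobal{f}{s}\leq\sitinuglobal{f}{s}$. I expect this step --- the control of $\limsup(\submartin-\supermartin)$ on the infinite-valued paths --- to be the main obstacle, since it is precisely the interplay of Lemma~\ref{lem:very:basic:inequality} with the restriction to $\basubmartins$ and $\bbsupermartins$ that repairs the pathology of Example~\ref{ex:undesirable:behaviour}.

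Finally, for the monotonicity \ref{it:elex:monotone} I would exploit the \emph{strict} nature of the almost-sure hypothesis. Writing $N$ for the strictly null set of paths in $\exact{s}$ where $f>g$, there is by definition a test supermartingale $\test$ with $\lim\test=+\infty$ on $N$; note $\test\geq0$, $\test(s)\in\reals$, and $-\test$ is a bounded-above submartingale. Given any competitor $\submartin\in\basubmartins$ for $f$, I would perturb it to $\submartin-\delta\test$ for $\delta>0$: this is still a bounded-above submartingale, and its $\limsup$ on $\exact{s}$ is $\leq g$ everywhere --- on $N$ because $\submartin-\delta\test\to-\infty$, and off $N$ because $\limsup(\submartin-\delta\test)\leq\limsup\submartin\leq f\leq g$ there (using $\test\geq0$). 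Hence $\submartin(s)-\delta\test(s)\leq\sitinlglobal{g}{s}$; letting $\delta\to0^+$ and taking the supremum over $\submartin$ gives $\sitinlglobal{f}{s}\leq\sitinlglobal{g}{s}$, and the upper-expectation statement follows by conjugacy applied to $-g\leq-f$. The ``as a consequence'' equality case is then immediate by applying the inequality in both directions, and this construction with a vanishing multiple of a blow-up test supermartingale is the one genuinely new ingredient beyond the standard coherence bookkeeping.
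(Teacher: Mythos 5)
Your proposal is correct, and for \ref{it:elex:bounds}, \ref{it:elex:super}, \ref{it:elex:homo}, \ref{it:elex:monotone} and \ref{it:elex:constant:additivity} it follows the paper's proof essentially verbatim (constant competitors, sum and scaling/translation bijections of $\basubmartins$, and the $\submartin-\epsilon\test$ perturbation with a test supermartingale blowing up on the strictly null exceptional set). The one genuine divergence is the inner inequality of \ref{it:elex:more:bounds}. You prove $\sitinlglobal{f}{s}\leq\sitinuglobal{f}{s}$ directly: for competitors $\submartin\in\basubmartins$ and $\supermartin\in\bbsupermartins$ you form the submartingale $\submartin-\supermartin$, apply Lemma~\ref{lem:very:basic:inequality}, and verify $\limsup(\submartin-\supermartin)\leq0$ on all of $\exact{s}$ by a case split on whether $f(\pth)$ is finite, $+\infty$ or $-\infty$ --- the infinite cases being exactly where boundedness above of $\submartin$ and below of $\supermartin$ is used. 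The paper instead argues by contradiction: if $\sitinlglobal{f}{s}+\sitinlglobal{-f}{s}>0$ then superadditivity [\ref{it:elex:super}] gives $\sitinlglobal{f+(-f)}{s}>0$, while the convention $\infty+(-\infty)=-\infty$ makes $f+(-f)\leq0$, so monotonicity [\ref{it:elex:monotone}] forces $\sitinlglobal{f+(-f)}{s}\leq0$. Both arguments are sound; the paper's is shorter and recycles the already-proven items, whereas yours is self-contained at that point, avoids leaning on the extended-addition convention, and makes visibly explicit how the restriction to $\basubmartins$ and $\bbsupermartins$ repairs the pathology of Example~\ref{ex:undesirable:behaviour} --- which is a worthwhile piece of insight even if it costs a few more lines.
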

\noindent
In these expressions, as well as further on, we use the convention that $\infty+\infty=\infty$, $-\infty+(-\infty)=-\infty$, $-\infty+\infty=\infty+(-\infty)=-\infty$, $a+\infty=\infty+a=\infty$, $a+(-\infty)=-\infty+a=-\infty$ for all real $a$, and $0\cdot\pm\infty=\pm\infty\cdot0=0$.\footnote{This is the extended addition that is convenient for working with lower expectations; for the dual upper expectations, we need to introduce a dual operator, defined by $a\upplus b\coloneqq-[(-a)+(-b)]$ for all extended real $a$ and $b$.}

\begin{proof}
\ref{it:elex:bounds}.
If $\inf\cset{f(\pth)}{\pth\in\exact{s}}=-\infty$, then the inequality is trivially satisfied.
Consider therefore any real $L\leq\inf\cset{f(\pth)}{\pth\in\exact{s}}$, and the submartingale $\submartin$ that assumes the constant value $L$ everywhere. 
Then surely $\submartin$ is bounded above, $\limsup\submartin(s\andpath)=L\leq f(s\andpath)$ and $\submartin(s)=L$, so Equation~\eqref{eq:global:lower:extended} guarantees that indeed $L\leq\sitinlglobal{f}{s}$.

\ref{it:elex:super}.
When $\sitinlglobal{f}{s}$ or $\sitinlglobal{g}{s}$ are equal to $-\infty$, so is their sum, and the inequality holds trivially.
Assume therefore that both $\sitinlglobal{f}{s}>-\infty$ and $\sitinlglobal{g}{s}>-\infty$.
This implies that there are bounded above submartingales $\submartin_1$ and $\submartin_2$ such that $\limsup\submartin_1(s\andpath)\leq f(s\andpath)$ and $\limsup\submartin_2(s\andpath)\leq g(s\andpath)$.
Consider any such submartingales $\submartin_1$ and $\submartin_2$, then it follows from the coherence [\ref{it:lex:super}] of the local models that $\submartin\coloneqq\submartin_1+\submartin_2$ is a bounded above submartingale as well.
Since $\limsup\submartin(s\andpath)\leq\limsup\submartin_1(s\andpath)+\limsup\submartin_2(s\andpath)\leq f(s\andpath)+g(s\andpath)$,\footnote{The first inequality holds for bounded above submartingales, but may fail for more general ones. Indeed, assume that on some path $\pth$, $\submartin_1(\pth^n)=2n$ and $\submartin_2(\pth^n)=-n$. Then $\limsup\submartin_1(\pth)=+\infty$, $\limsup\submartin_2(\pth)=-\infty$, and $\limsup[\submartin_1(\pth)+\submartin_2(\pth)]=+\infty$, so the inequality is violated.} we infer from Equation~\eqref{eq:global:lower:extended} that indeed $\sitinlglobal{f+g}{s}\geq\sitinlglobal{f}{s}+\sitinlglobal{g}{s}$.

\ref{it:elex:homo}.
For $\lambda>0$, simply observe that if $\submartin$ is a bounded above submartingale such that $\limsup\submartin(s\andpath)\leq f(s\andpath)$, then also $\lambda\submartin$ is a bounded above submartingale such that $\limsup[\lambda\submartin(s\andpath)]\leq\lambda f(s\andpath)$, and \emph{vice versa}. 
For $\lambda=0$, we infer on the one hand from~\ref{it:elex:bounds} and Lemma~\ref{lem:very:basic:inequality} that $\sitinlglobal{\lambda f}{s}=\sitinlglobal{0}{s}=0$, and on the other hand we also know that $0\cdot\sitinlglobal{f}{s}=0$.

\ref{it:elex:monotone}.
Due to conjugacy, it suffices to prove the first inequality.
It is trivially satisfied if $\sitinlglobal{f}{s}=-\infty$.
Assume therefore that $\sitinlglobal{f}{s}>-\infty$, meaning that there is some bounded above submartingale $\submartin$ such that $\limsup\submartin(s\andpath)\leq f(s\andpath)$.
Consider any such submartingale $\submartin$ and any real $\epsilon>0$.
It follows from the assumption and Theorem~\ref{thm:supermartingale:convergence} that there is some test supermartingale $\test\geq0$ with $\test(\init)=1$ that converges to $+\infty$ on all paths $\pth\in\exact{s}$ for which $f(\pth)>g(\pth)$.
If we let $\submartin'\coloneqq\submartin-\epsilon\test$, then $\submartin'$ is a bounded above submartingale, $\submartin'\leq\submartin$ and $\submartin'(s)=\submartin(s)-\epsilon\test(s)$.
Moreover, for any $\pth\in\exact{s}$, $\limsup\submartin'(\pth)=-\infty\leq g(\pth)$ if $g(\pth)<f(\pth)$, and $\limsup\submartin'(\pth)\leq\limsup\submartin(\pth)\leq f(\pth)\leq g(\pth)$ otherwise.
Hence $\limsup\submartin'(s\andpath)\leq g(s\andpath)$, so we infer from Equation~\eqref{eq:global:lower:extended} that $\submartin(s)-\epsilon\test(s)\leq\sitinlglobal{g}{s}$, and therefore also $\sitinlglobal{f}{s}-\epsilon\test(s)\leq\sitinlglobal{g}{s}$.
Since this inequality holds for all $\epsilon>0$, we find that indeed $\sitinlglobal{f}{s}\leq\sitinlglobal{g}{s}$.

\ref{it:elex:more:bounds}.
Suppose {\itshape ex absurdo} that $\sitinlglobal{f}{s}>\sitinuglobal{f}{s}=-\sitinlglobal{-f}{s}$.
This implies that also $\sitinlglobal{f}{s}+\sitinlglobal{-f}{s}>0$, but then \ref{it:elex:super} tells us that also $\sitinlglobal{f+(-f)}{s}>0$.
Now the extended real map $f+(-f)$ assumes only the values $0$ and $-\infty$, and therefore $f+(-f)\leq0$, so we infer from~\ref{it:elex:monotone} that $\sitinlglobal{f+(-f)}{s}\leq\sitinlglobal{0}{s}=0$, where the last equality follows from~\ref{it:elex:homo}.
This is a contradiction.
The remaining inequalities are now trivial.

\ref{it:elex:constant:additivity}.
Due to conjugacy, it suffices to prove the first equality.
If $\submartin$ is a bounded above submartingale such that $\limsup\submartin(s\andpath)\leq f(s\andpath)+\mu$, then $\submartin-\mu$ is a bounded above submartingale such that $\limsup[\submartin(s\andpath)-\mu]\leq f(s\andpath)$, and \emph{vice versa}. 
\end{proof}

Our second result follows immediately from the definition of the global lower expectations in Equation~\eqref{eq:global:lower:extended}. 

\begin{proposition}\label{prop:plug:in:the:conditional}
Consider any extended real variable~$f$ and any $n\in\natswithzero$. 
Then
\begin{equation*}
\xtoinlglobal{f(\state{1}\state{2}\dots)}{n}
=\xtoinlglobal{f(\xvalto{n}\state{n+1}\state{n+2}\dots)}{n}.
\end{equation*}
\end{proposition}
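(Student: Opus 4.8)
The plan is to unwind the definition in Equation~\eqref{eq:global:lower:extended} and to exploit the fact that this definition sees its argument variable only through the restriction to the exact event being conditioned on. Taking $s=\xvalto{n}$ there, the left-hand side reads
\begin{equation*}
\xtoinlglobal{f(\state{1}\state{2}\dots)}{n}
=\sup\cset{\submartin(\xvalto{n})}
{\submartin\in\basubmartins\text{ and }\limsup\submartin(\pth)\leq f(\pth)\text{ for all $\pth\in\exact{\xvalto{n}}$}},
\end{equation*}
and the right-hand side is the same expression with $f(\pth)$ replaced by the value at $\pth$ of the variable $f(\xvalto{n}\state{n+1}\state{n+2}\dots)$. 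So everything reduces to comparing the two constraints appearing in these suprema.

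The key observation is that these constraints coincide, because the two argument variables are literally indistinguishable on the event $\exact{\xvalto{n}}$ over which the constraints are imposed. Indeed, for every path $\pth\in\exact{\xvalto{n}}$ we have $\pth^{n}=\xvalto{n}$ by definition of the exact event, that is, $\pth_k=\xval{k}$ for $1\leq k\leq n$, and therefore
\begin{equation*}
f(\pth)
=f(\pth_1\pth_2\dots)
=f(\xval{1}\dots\xval{n}\pth_{n+1}\pth_{n+2}\dots)
\quad\text{for all $\pth\in\exact{\xvalto{n}}$},
\end{equation*}
where the right-hand side is precisely the value that the variable $f(\xvalto{n}\state{n+1}\state{n+2}\dots)$ assumes at $\pth$. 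Thus the two extended real variables agree pointwise on the whole of $\exact{\xvalto{n}}$.

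Since the constraint $\limsup\submartin(\pth)\leq(\,\cdot\,)(\pth)$ in Equation~\eqref{eq:global:lower:extended} is required only for $\pth\in\exact{\xvalto{n}}$, and since our two variables coincide there, the collections of submartingales $\submartin\in\basubmartins$ over which the two suprema of $\submartin(\xvalto{n})$ are taken are the same set; hence the two suprema are equal, which is the claimed identity. If one prefers an even softer route, pointwise equality on $\exact{\xvalto{n}}$ is a fortiori strictly almost sure equality there, so the equality part of the monotonicity property~\ref{it:elex:monotone} applies directly. I expect no genuine obstacle in this argument; the only point deserving explicit mention is that the defining supremum depends on the argument solely through its values on $\exact{\xvalto{n}}$, which is exactly what licenses substituting the observed values $\xvalto{n}$ into the first $n$ slots of $f$.
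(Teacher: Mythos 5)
Your argument is correct and is precisely the one the paper intends: the paper offers no separate proof, stating only that the result "follows immediately from the definition of the global lower expectations in Equation~\eqref{eq:global:lower:extended}", and your observation that the defining supremum sees $f$ only through its values on $\exact{\xvalto{n}}$, where the two variables coincide pointwise, is exactly that immediate argument made explicit.
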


Our third result can be seen as a generalisation of the law of iterated expectations---or the law of total probability in expectation form---in classical probability theory.
Our formulation generalises a result by Shafer and Vovk~\cite[Proposition~8.7]{shafer2001}, whose proof can only be guaranteed to work for bounded real variables; we provide a proof that is better suited for dealing with extended real variables.
In accordance with the notational convention introduced in Section~\ref{sec:basic:iptrees}, we denote by $\statetoinlglobal{f}{n}$ the extended real variable that assumes the same value $\xtoinlglobal{f}{n}$ on all paths $\pth=\xvalto{n}\andpath$ that go through $\xvalto{n}$.
It is clearly $n$-measurable, and therefore only depends on the values of the first $n$ states $\stateto{n}$.

\begin{theorem}[Law of iterated lower expectations]\label{thm:law:of:iterated:expectations:general}
Consider any extended real variable~$f$ and any $n,m\in\natswithzero$ such that $n\geq m$. 
Then
\begin{equation*}
\statetoinlglobal{f}{m}
=\statetoinlglobal{\statetoinlglobal{f}{n}}{m}.
\end{equation*}
\end{theorem}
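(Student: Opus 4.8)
The plan is to fix the initial situation $\xvalto{m}$ and prove the pointwise identity $\xtoinlglobal{f}{m}=\xtoinlglobal{g}{m}$, where $g\coloneqq\statetoinlglobal{f}{n}$ is the $n$-measurable extended real variable with $g(\xvalto{n})=\xtoinlglobal{f}{n}$; establishing this for every $\xvalto{m}$ is exactly the asserted equality between $m$-measurable variables. I would prove the two inequalities separately. Throughout I use the $\basubmartins$-version of the defining formula~\eqref{eq:global:lower:extended} and of Proposition~\ref{prop:finite:horizon} (permitted by the footnote there), so that all submartingales in play are bounded above.

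For the inequality $\xtoinlglobal{f}{m}\leq\xtoinlglobal{g}{m}$, take any bounded-above submartingale $\submartin$ with $\limsup\submartin(\pth)\leq f(\pth)$ for all $\pth\in\exact{\xvalto{m}}$. Since $\exact{\xvalto{n}}\subseteq\exact{\xvalto{m}}$ for every extension $\xvalto{n}$ of $\xvalto{m}$, formula~\eqref{eq:global:lower:extended} applied at $\xvalto{n}$ immediately gives $\submartin(\xvalto{n})\leq\xtoinlglobal{f}{n}=g(\xvalto{n})$. Thus $\submartin$ is an admissible competitor in the description of $\xtoinlglobal{g}{m}$ furnished by Proposition~\ref{prop:finite:horizon}, whence $\submartin(\xvalto{m})\leq\xtoinlglobal{g}{m}$; taking the supremum over all such $\submartin$ finishes this direction.

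The converse inequality $\xtoinlglobal{g}{m}\leq\xtoinlglobal{f}{m}$ is the crux and will require a pasting construction. By Proposition~\ref{prop:finite:horizon} it suffices to show $\submartin(\xvalto{m})\leq\xtoinlglobal{f}{m}$ for any bounded-above submartingale $\submartin$ with $\submartin(\xvalto{n})\leq g(\xvalto{n})=\xtoinlglobal{f}{n}$ for all $\xvalfromto{m+1}{n}\in\statesfromto{m+1}{n}$. (If some $\xtoinlglobal{f}{n}=-\infty$ the constraint set is empty and $\xtoinlglobal{g}{m}=-\infty$, so I may assume every $\xtoinlglobal{f}{n}>-\infty$.) Fixing $\epsilon>0$, for each of the finitely many $\xvalto{n}$ I choose, via~\eqref{eq:global:lower:extended}, a bounded-above submartingale $\submartin^{\xvalto{n}}$ with $\limsup\submartin^{\xvalto{n}}(\pth)\leq f(\pth)$ on $\exact{\xvalto{n}}$ and $\submartin^{\xvalto{n}}(\xvalto{n})\geq\submartin(\xvalto{n})-\epsilon$ (possible both when $\xtoinlglobal{f}{n}$ is finite and when it equals $+\infty$). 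I then define a single process $\submartin^{*}$ that coincides with $\submartin$ on all situations up to level $n$ and, above each $\xvalto{n}$, continues as the shifted copy $\submartin(\xvalto{n})-\submartin^{\xvalto{n}}(\xvalto{n})+\submartin^{\xvalto{n}}$. One checks that $\submartin^{*}$ is a submartingale (its differences are those of $\submartin$ below level $n$ and those of $\submartin^{\xvalto{n}}$ above $\xvalto{n}$), that it is bounded above (finiteness of the state sets makes the finitely many additive constants harmless), that $\submartin^{*}(\xvalto{m})=\submartin(\xvalto{m})$, and that the additive constant $\submartin(\xvalto{n})-\submartin^{\xvalto{n}}(\xvalto{n})\leq\epsilon$ forces $\limsup\submartin^{*}(\pth)\leq f(\pth)+\epsilon$ for all $\pth\in\exact{\xvalto{m}}$. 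Applying~\eqref{eq:global:lower:extended} to the bounded-above submartingale $\submartin^{*}-\epsilon$ yields $\submartin(\xvalto{m})-\epsilon\leq\xtoinlglobal{f}{m}$, and letting $\epsilon\downarrow0$ gives the claim.

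The main obstacle is precisely this pasting step: verifying that the glued process is a genuine bounded-above submartingale and controlling the error introduced where the pieces are joined. The two delicate points are (i) that the join at each $\xvalto{n}$ can only be made with an $\epsilon$ of slack unless the supremum defining $\xtoinlglobal{f}{n}$ is attained, which is why I work with $\submartin^{*}-\epsilon$ and pass to the limit, and (ii) the careful bookkeeping of the conventions for $\pm\infty$ (via Proposition~\ref{prop:properties:of:global:expectations} and Lemma~\ref{lem:very:basic:inequality}) so that the inequalities involving $\limsup$ and the extended-real values of $f$ remain valid in the degenerate cases.
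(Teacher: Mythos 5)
Your proof is correct and follows essentially the same route as the paper's: the easy direction by restricting a global witness submartingale to the level-$n$ cylinders, and the hard direction by pasting $\epsilon$-optimal continuation submartingales above level $n$ onto a level-$n$ witness. The only difference is cosmetic: the paper routes the hard direction through Corollary~\ref{cor:finite:horizon} (approximating $\statetoinlglobal{f}{n}$ from below by gambles $h$) and arranges the join so that the additive shift is non-positive, whereas you apply Proposition~\ref{prop:finite:horizon} directly to the extended real variable and absorb the $\epsilon$ of slack by passing to $\submartin^{*}-\epsilon$ at the end.
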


\begin{proof}
Fix any $\zvalto{m}\in\statesto{m}$. 
We prove that $\ztoinlglobal{f}{m}=\ztoinlglobal{\statetoinlglobal{f}{n}}{m}$, or equivalently, by Proposition~\ref{prop:plug:in:the:conditional}, that $\ztoinlglobal{f}{m}=\ztoinlglobal{\zstatetoinlglobal{f}{m}{n}}{m}$.

First, consider any bounded above submartingale $\submartin$ such that $\limsup\submartin(\zvalto{m}\andpath)\leq f(\zvalto{m}\andpath)$.
Then also, for any $\xvalfromto{m+1}{n}\in\statesfromto{m+1}{n}$, $\limsup\submartin(\zvalto{m}\xvalfromto{m+1}{n}\andpath)\leq g(\zvalto{m}\xvalfromto{m+1}{n}\andpath)$, which implies that, by Equation~\eqref{eq:global:lower:extended} for $s\coloneqq\zvalto{m}\xvalfromto{m+1}{n}$, $\submartin(\zvalto{m}\xvalfromto{m+1}{n})\leq\zxtoinlglobal{f}{m}{n}$. 
Hence $\submartin(\zstateto{m}{n})\leq\zstatetoinlglobal{f}{m}{n}$, and therefore we can infer from Proposition~\ref{prop:properties:of:global:expectations} [\ref{it:elex:monotone} for $s\coloneqq\zvalto{m}$] that $\ztoinlglobal{\submartin(\zstateto{m}{n})}{m}\leq\ztoinlglobal{\zstatetoinlglobal{f}{m}{n}}{m}$.
Since it follows almost trivially from Proposition~\ref{prop:finite:horizon} that $\submartin(\zvalto{m})\leq\ztoinlglobal{\submartin(\zstateto{m}{n})}{m}$, this also implies that $\submartin(\zvalto{m})\leq\ztoinlglobal{\zstatetoinlglobal{f}{m}{n}}{m}$. 
If we now use Equation~\eqref{eq:global:lower:extended} for $s\coloneqq\zvalto{m}$, we find that $\ztoinlglobal{f}{m}\leq\ztoinlglobal{\zstatetoinlglobal{f}{m}{n}}{m}$.

To prove the converse inequality, consider any $h\in\gamblesto{n}$ such that $h(\zvalto{m}\xvalfromto{m+1}{n})\leq\zxtoinlglobal{f}{m}{n}$ for all $\xvalfromto{m+1}{n}\in\statesfromto{m+1}{n}$. 
Fix any $\epsilon>0$. 
It then follows from Equation~\eqref{eq:global:lower:extended} [with $s\coloneqq\zvalto{m}\xvalfromto{m+1}{n}$] that, for any $\xvalfromto{m+1}{n}\in\statesfromto{m+1}{n}$, there is some bounded above submartingale $\submartin_{\xvalfromto{m+1}{n}}$ such that
\begin{multline*}
\submartin_{\xvalfromto{m+1}{n}}(\zvalto{m}\xvalfromto{m+1}{n})
\geq h(\zvalto{n}\xvalfromto{m+1}{n})-\frac{\epsilon}{2}\\
\text{ and }
\limsup\submartin_{\xvalfromto{m+1}{n}}(\zvalto{m}\xvalfromto{m+1}{n}\andpath)
\leq f(\zvalto{m}\xvalfromto{m+1}{n}\andpath).
\end{multline*}
Now consider any $n$-measurable real variable $g$ such that
\begin{equation*}
g(\zvalto{m}\xvalfromto{m+1}{n})
=\submartin_{\xvalfromto{m+1}{n}}(\zvalto{m}\xvalfromto{m+1}{n})
\geq h(\zvalto{m}\xvalfromto{m+1}{n})-\frac{\epsilon}{2}
\text{ for all $\xvalfromto{m+1}{n}\in\statesfromto{m+1}{n}$},
\end{equation*}
then it follows from Proposition~\ref{prop:finite:horizon} that there is a submartingale $\submartin'$ such that $\submartin'(\zvalto{m})>\ztoinlglobal{g}{m}-\frac{\epsilon}{2}$ and $\submartin'(\zvalto{m}\xvalfromto{m+1}{n})\leq g(\zvalto{m}\xvalfromto{m+1}{n})$ for all $\xvalfromto{m+1}{n}\in\statesfromto{m+1}{n}$.  
Now consider a submartingale $\submartin$ that assumes the constant value $\submartin'(\zvalto{m})$ in all situations $t$ that do not strictly follow $\zvalto{m}$---so $\submartin(t)=\submartin'(\zvalto{m})$ for all $t\in\sits$ such that $\zvalto{m}\not\sprecedes t$ and, in particular, $\submartin(\zvalto{m})=\submartin'(\zvalto{m})$---and such that moreover
\begin{multline*}
\Delta\submartin(\zvalto{m}\xvalfromto{m+1}{k})
=
\begin{cases}
\Delta\submartin'(\zvalto{m}\xvalfromto{m+1}{k})
&\text{ if $k<n$}\\
\Delta\submartin_{\xvalfromto{m+1}{n}}(\zvalto{m}\xvalfromto{m+1}{k})
&\text{ if $k\geq n$}
\end{cases}\\
\quad\text{for all $k\geq m$ and $\xvalfromto{m+1}{k}\in\statesfromto{m+1}{k}$.}
\end{multline*}
It then follows that $\submartin(\zvalto{m}\xvalfromto{m+1}{k})\leq\submartin_{\xvalfromto{m+1}{n}}(\zvalto{m}\xvalfromto{m+1}{k})$ for all $k\geq n$ and $\xvalfromto{m+1}{k}\in\statesfromto{m+1}{k}$ and, therefore, we find that $\submartin$ is bounded above and that $\limsup\submartin(\zvalto{m}\xvalfromto{m+1}{n}\andpath)\leq f(\zvalto{m}\xvalfromto{m+1}{n}\andpath)$ for all $\xvalfromto{m+1}{n}\in\statesfromto{m+1}{n}$, which implies that $\limsup\submartin(\zvalto{m}\andpath)\leq f(\zvalto{m}\andpath)$, and therefore also that $\ztoinlglobal{f}{m}\geq\submartin(\zvalto{m})$, by applying Equation~\eqref{eq:global:lower:extended} for $s=\zvalto{m}$.
Since also $\submartin(\zvalto{m})=\submartin'(\zvalto{m})>\ztoinlglobal{g}{m}-\frac{\epsilon}{2}$, we find that $\ztoinlglobal{f}{m}>\ztoinlglobal{g}{m}-\frac{\epsilon}{2}$. 
Furthermore, since $g(\zvalto{m}\statefromto{m+1}{n})\geq h(\zvalto{m}\statefromto{m+1}{n})-\frac{\epsilon}{2}$, it follows from Proposition~\ref{prop:plug:in:the:conditional} and~\ref{it:elex:monotone} that $\ztoinlglobal{g}{m}=\ztoinlglobal{g(\zvalto{m}\statefromto{m+1}{n})}{m}\geq\ztoinlglobal{h(\zvalto{m}\statefromto{m+1}{n})-\frac{\epsilon}{2}}{m}=\ztoinlglobal{h-\frac{\epsilon}{2}}{m}$, which, due to~\ref{it:elex:constant:additivity}, implies that $\ztoinlglobal{g}{m}\geq\ztoinlglobal{h}{m}-\frac{\epsilon}{2}$. 
Hence, we find that $\ztoinlglobal{f}{m}>\ztoinlglobal{h}{m}-\epsilon$.
Since this holds for any $\epsilon>0$, we find that $\ztoinlglobal{f}{m}\geq\ztoinlglobal{h}{m}$. 
Since this holds for any $h\in\gamblesto{n}$ such that $h(\zvalto{m}\xvalfromto{m+1}{n})\leq\zxtoinlglobal{f}{m}{n}$ for all $\xvalfromto{m+1}{n}\in\statesfromto{m+1}{n}$, it follows from Corollary~\ref{cor:finite:horizon} that $\ztoinlglobal{f}{m}\geq\ztoinlglobal{\zstatetoinlglobal{f}{m}{n}}{m}$.
\end{proof}

\section{Imprecise Markov chains}\label{sec:markov}
We are now ready to apply what we have learned in the previous sections to the special case of (time-homogeneous) imprecise Markov chains.
These are imprecise probability trees where (i) all states $\state{k}$ assume values in the same \emph{finite} set $\states{k}=\stateset$, called the \emph{state space}, and (ii) all local uncertainty models satisfy the so-called (time-homogeneous) \emph{Markov condition}:
\begin{equation}\label{eq:markov:condition}
\xtoinllocal{\cdot}{n}=\ltransition{\cdot}{\xval{n}}
\quad\text{for all situations $\xvalto{n}\in\sits$},
\end{equation}
meaning that these local models only depend on the last observed state; see Figure~\ref{fig:markovtree}.

\begin{figure}[ht]
\centering\footnotesize
\begin{tikzpicture}
\tikzstyle{level 1}=[sibling distance=20em]
\tikzstyle{level 2}=[sibling distance=10em]
\tikzstyle{level 3}=[sibling distance=5em]
\tikzstyle{level 4}=[level distance=2em]
\node[root] (root) {} [grow=down,level distance=8ex]
child {node[nonterminal] (a) {$a\vphantom{)}$}
  child {node[nonterminal] (aa) {$(a,a)$}
    child {node[nonterminal] (aaa) {$(a,a,a)$}
      child[black,dotted]}
    child {node[nonterminal] (aab) {$(a,a,b)$}
      child[black,dotted]}
  }
  child {node[nonterminal] (ab) {$(a,b)$}
    child {node[nonterminal] (aba) {$(a,b,a)$}
      child[black,dotted]}
    child {node[nonterminal] (abb) {$(a,b,b)$}
      child[black,dotted]}
  }
}
child {node[nonterminal] (b) {$b\vphantom{)}$}
  child {node[nonterminal] (ba) {$(b,a)$}
    child {node[nonterminal] (baa) {$(b,a,a)$}
      child[black,dotted]}
    child {node[nonterminal] (bab) {$(b,a,b)$}
      child[black,dotted]}
  }
  child {node[nonterminal] (bb) {$(b,b)$}
    child {node[nonterminal] (bba) {$(b,b,a)$}
      child[black,dotted]}
    child {node[nonterminal] (bbb) {$(b,b,b)$}
      child[black,dotted]}
  }
};
\draw[local,thick] (root) +(190:1.5em) arc (190:350:1.5em);
\draw[local,thick] (b) +(210:2em) arc (210:330:2em);
\draw[local,thick] (a) +(210:2em) arc (210:330:2em);
\draw[local,thick] (bb) +(230:2.25em) arc (230:310:2.25em);
\draw[local,thick] (ba) +(230:2.25em) arc (230:310:2.25em);
\draw[local,thick] (ab) +(230:2.25em) arc (230:310:2.25em);
\draw[local,thick] (aa) +(230:2.25em) arc (230:310:2.25em);
\path (root) +(275:2.35em) node[local] {$\lmargin{\cdot}$};
\path (a) +(270:2.95em) node[local] {$\ltransition{\cdot}{a}$};
\path (b) +(270:2.95em) node[local] {$\ltransition{\cdot}{b}$};
\path (aa) +(300:2.8em) node[local,above right] {$\ltransition{\cdot}{a}$};
\path (bb) +(300:2.8em) node[local,above right] {$\ltransition{\cdot}{b}$};
\path (ba) +(300:2.8em) node[local,above right] {$\ltransition{\cdot}{a}$};
\path (ab) +(300:2.8em) node[local,above right] {$\ltransition{\cdot}{b}$};
\end{tikzpicture}
\caption{The (initial part of the) imprecise probability tree for an imprecise Markov process whose states can assume two values, $a$ and~$b$, and can change at time instants $n=1,2,3,\dots$}
\label{fig:markovtree}
\end{figure}
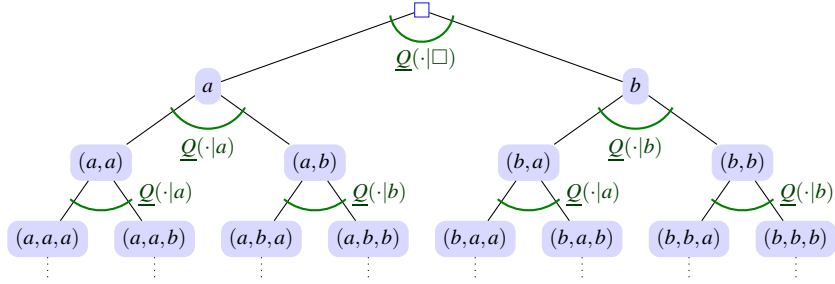

We refer to Refs.~\cite{cooman2008,hermans2012,skulj2013} for detailed studies of the behaviour of these processes.
We restrict ourselves here in Section~\ref{sec:lower-transition} to a summary of the existing material in the literature that is relevant for the discussion of ergodicity in later sections.
As far as we can tell, all of the current discussions and results about imprecise Markov chains deal with a finite time horizon, and consider only bounded real variables (gambles).
For this reason, we devote some effort in Section~\ref{sec:properties:of:global:expectations} to broadening the discussion to an infinite time horizon, using the expressions for the global conditional lower expectations that we introduced in Section~\ref{sec:global:models} and extended to extended real variables in Section~\ref{sec:properties:of:global:models}.
In Section~\ref{sec:shift-invariance}, we discuss the relationship between time shifts and lower expectation operators in imprecise Markov chains and use these results to characterise their potential stationarity.

We believe it is important to explain at this point how our imprecise Markov chains are related to their precise counterparts.
Recall from the discussion in Section~\ref{sec:global:models} that the expressions for the lower and upper expectations in Proposition~\ref{prop:finite:horizon:gambles} have an interesting interpretation in terms of (precise) probability trees~\cite{cooman2007d}: (i) the imprecise probability tree for an imprecise Markov chain corresponds to a collection of \emph{compatible} precise probability trees with the same event tree, by associating with each situation $\xvalto{n}$ in the event tree some arbitrarily chosen precise local expectation $\sitinlocal{\cdot}{\xvalto{n}}$ in the convex closed set $\solp(\sitinllocal{\cdot}{\xval{n}})$ of expectations that are compatible with the local lower expectation $\sitinllocal{\cdot}{\xval{n}}$; and (ii) for any $n$-measurable gamble $f$ on $\pths$, the global precise expectations in the compatible precise probability trees will then range over a closed interval whose lower and upper bounds are given by the expressions in Proposition~\ref{prop:finite:horizon}.
\emph{It should be clear that the local precise models $\sitinlocal{\cdot}{\xvalto{n}}$ need not satisfy the Markov condition,\footnote{\dots\ in either its time-homogeneous or time-inhomogeneous form.} in contradistinction with the \emph{collections} $\solp(\sitinllocal{\cdot}{\xval{n}})$ of precise models they are chosen from.}
In other words, imprecise Markov chains are \emph{not} simply collections of precise Markov chains, but rather correspond to collections of general stochastic processes whose local models belong to sets whose lower and upper envelopes satisfy a Markov condition.

\subsection{Lower transition operators and {\pflike} behaviour}\label{sec:lower-transition}
We can use the local uncertainty models to introduce a (generally non-linear) transformation $\ltrans$ of the set $\gambles$ of all gambles on the state space $\stateset$.
The so-called \emph{lower transition operator} of the imprecise Markov chain is given by:
\begin{equation*}
\ltrans\colon\gambles\to\gambles\colon f\mapsto\ltrans f,
\end{equation*}
where $\ltrans f$ is the gamble on $\stateset$ defined by
\begin{equation*}
\ltrans f(x)\coloneqq\ltransition{f}{x}
\quad\text{for all $x\in\stateset$}.
\end{equation*}
The conjugate \emph{upper transition operator} $\utrans$ is defined by $\utrans f\coloneqq-\ltrans(-f)$ for all $f\in\gambles$.
In particular, $\ltrans\ind{\set{y}}(x)$ is the lower probability to go from state value $x$ to state value $y$ in one time step, and $\utrans\ind{\set{y}}(x)$ the conjugate upper probability.
This seems to suggest that the lower/upper transition operators $\ltrans$ are generalisations of the concept of a Markov transition matrix for ordinary Markov chains.
This is confirmed by the following result, proved in Ref.~\cite[Corollary~3.3]{cooman2008} as a special case of the law of iterated (lower) expectations~\cite{cooman2007d,shafer2001}; see also Corollary~\ref{corol:LILE3} further on for a more general formulation.
If, for any $n\in\nats$ and any $f\in\gambles$, we denote by $\ljoint[n](f)$ the value of the (global) lower expectation $\ljoint(f(\state{n}))$ of the real variable $f(\state{n})$ that only depends on the state $\state{n}$ at time $n$, then
\begin{equation*}
\ljoint[n](f)=\ljoint[1](\ltrans^{n-1}f),
\text{ with }
\ltrans^{n-1}f\coloneqq\underset{\text{$n-1$ times}}{\underbrace{\ltrans\ltrans\dots\ltrans}}f,
\end{equation*}
and where, of course, $\ljoint[1]=\sitinllocal{\cdot}{\init}$ is the marginal local model for the state $\state{1}$ at time $1$.
In a similar vein, for any $n\in\natswithzero$, $\ltrans^{n}\ind{\set{y}}(x)$ is the lower probability to go from state value $x$ to state value $y$ in $n$ time steps, and $\utrans^{n}\ind{\set{y}}(x)$ the conjugate upper probability.

We can formally call \emph{lower transition operator} any transformation $\ltrans$ of $\gambles$ such that for any $x\in\stateset$, the real functional $\ltrans_x$ on $\gambles$, defined by $\ltrans_x(f)\coloneqq\ltrans f(x)$ for all $f\in\gambles$, is a lower expectation---satisfies the coherence axioms~\ref{it:lex:bounds}--\ref{it:lex:homo}.
The composition of any two lower transition operators is again a lower transition operator. 
See Ref.~\cite{cooman2008} for more details on the definition and properties of such lower transition operators, and Ref.~\cite{cooman2014:filtermaps} for a mathematical discussion of the general role of these  operators in imprecise probabilities.

We call a lower transition operator $\ltrans$ \emph{{\pflike}} if for all $f\in\gambles$, the sequence of gambles $\ltrans^{n}f$ converges point-wise to a constant real number, which we will then denote by $\ljoint[\pf](f)$.
An imprecise Markov chain is said to be \emph{\pflike} if its lower transition operator is.

The following result was proved in Ref.~\cite[Theorem~5.1]{cooman2008}, together with a simple sufficient (and quite weak) condition on $\ltrans$ for a Markov chain to be {\pflike}: there is some $n\in\nats$ such that $\min\utrans^{n}\ind{\set{y}}>0$ for all $y\in\stateset$, or in other words, all state values can be reached from any state value with positive upper probability in (precisely) $n$ time steps. 
More involved necessary \emph{and} sufficient conditions were given later in Refs.~\cite{hermans2012,skulj2013}; see also Theorem~\ref{thm:coe:properties}\ref{thm:coe:ergodicity} further on.

\begin{proposition}[\cite{cooman2008}]\label{prop:perron:frobenius}
A lower transition operator\/ $\ltrans$ is {\pflike} if and only if there is some real functional $\ljoint[\infty]$ on $\gambles$ such that for any initial model $\ljoint[1]$ and any $f\in\gambles$, it holds that $\ljoint[1](\ltrans^{n-1}f)\to\ljoint[\infty](f)$. 
Moreover, in that case the functional $\ljoint[\infty]$ is a lower expectation on $\gambles$, called the \emph{stationary lower expectation}, it coincides with $\ljoint[\pf]$, and it is the only lower expectation that is\/ $\ltrans$-invariant in the sense that $\ljoint[\infty]\circ\ltrans=\ljoint[\infty]$.
\end{proposition}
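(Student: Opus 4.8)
The plan is to prove both implications at once by exploiting two elementary facts: on the finite state space $\stateset$, point-wise convergence of gambles coincides with uniform convergence, and every lower expectation is automatically continuous with respect to uniform convergence via the bounds in~\ref{it:lex:more:bounds}. This lets me squeeze all the relevant limits, and it also immediately pins down the identity $\ljoint[\infty]=\ljoint[\pf]$.

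For the direct implication, assume $\ltrans$ is {\pflike} and set $\ljoint[\infty]\coloneqq\ljoint[\pf]$. For any initial lower expectation $\ljoint[1]$ and any $f\in\gambles$, the gamble $\ltrans^{n-1}f$ converges point-wise---hence uniformly---to the constant $\ljoint[\pf](f)$, so both $\inf\ltrans^{n-1}f$ and $\sup\ltrans^{n-1}f$ tend to $\ljoint[\pf](f)$; since~\ref{it:lex:more:bounds} yields $\inf\ltrans^{n-1}f\leq\ljoint[1](\ltrans^{n-1}f)\leq\sup\ltrans^{n-1}f$, this squeezes $\ljoint[1](\ltrans^{n-1}f)$ to $\ljoint[\pf](f)$. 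Conversely, if such an $\ljoint[\infty]$ exists, I would specialise the initial model to the degenerate (precise) lower expectation $\ljoint[1]$ defined by $\ljoint[1](g)\coloneqq g(x)$ for a fixed $x\in\stateset$; then $\ljoint[1](\ltrans^{n-1}f)=\ltrans^{n-1}f(x)\to\ljoint[\infty](f)$, and since this holds for every $x$ with one and the same limit $\ljoint[\infty](f)$, the sequence $\ltrans^{n}f$ converges point-wise to the constant $\ljoint[\infty](f)$, which is precisely {\pflike}ness with $\ljoint[\pf]=\ljoint[\infty]$.

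Next I would verify that $\ljoint[\infty]$ is a lower expectation. Fixing any $x$, the functional $\ljoint[\infty](f)=\lim_n\ltrans^{n}f(x)$ is a point-wise limit of the functionals $f\mapsto\ltrans^{n}f(x)$, each of which is a lower expectation because $\ltrans$ is a lower transition operator and compositions of lower transition operators are again lower transition operators. As the coherence axioms~\ref{it:lex:bounds}--\ref{it:lex:homo} survive point-wise limits, $\ljoint[\infty]$ inherits them; superadditivity in particular is propagated through each composition by combining the monotonicity~\ref{it:lex:monotone} of $\ltrans$ with~\ref{it:lex:super} in a short induction. For $\ltrans$-invariance I would simply shift the index: $\ljoint[\infty](\ltrans f)=\lim_n\ltrans^{n}(\ltrans f)(x)=\lim_n\ltrans^{n+1}f(x)=\ljoint[\infty](f)$.

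Finally, for uniqueness, suppose $\ex'$ is any $\ltrans$-invariant lower expectation. Then $\ex'(\ltrans^{n}f)=\ex'(f)$ for all $n$ by induction, whereas $\ltrans^{n}f\to\ljoint[\infty](f)$ uniformly forces $\ex'(\ltrans^{n}f)\to\ljoint[\infty](f)$ by the same squeezing argument through~\ref{it:lex:more:bounds} (recalling that $\ex'$ maps constants to themselves). Comparing the two gives $\ex'(f)=\ljoint[\infty](f)$ for every $f$, so $\ex'=\ljoint[\infty]$. The only genuinely delicate point throughout is the closure of the lower-expectation axioms under iterating $\ltrans$ and passing to point-wise limits---specifically, that superadditivity is preserved under composition---but this is handled cleanly by the monotonicity of $\ltrans$; every other step reduces to the finite-state squeeze.
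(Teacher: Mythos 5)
Your argument is correct and complete. Note that the paper itself gives no proof of this proposition---it is imported verbatim from Ref.~\cite[Theorem~5.1]{cooman2008}---so there is nothing internal to compare against; but your self-contained derivation is sound and in fact mirrors the standard argument from that reference. The two load-bearing observations are exactly the right ones: (i) on a finite state space, point-wise convergence of $\ltrans^{n}f$ to a constant forces $\inf\ltrans^{n}f$ and $\sup\ltrans^{n}f$ to converge to that constant, so the bounds~\ref{it:lex:more:bounds} squeeze $\lex'(\ltrans^{n}f)$ for \emph{any} lower expectation $\lex'$ (this handles the direct implication, the identification $\ljoint[\infty]=\ljoint[\pf]$, and uniqueness all at once); and (ii) the degenerate models $g\mapsto g(x)$ are legitimate initial lower expectations, which recovers point-wise convergence from the hypothesised convergence of $\ljoint[1](\ltrans^{n-1}f)$ and settles the converse. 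The remaining verifications---that $\ltrans^{n}$ is again a lower transition operator (superadditivity propagating through composition via monotonicity), that the axioms~\ref{it:lex:bounds}--\ref{it:lex:homo} pass to point-wise limits of functionals, and the index shift giving $\ltrans$-invariance---are all handled correctly. The paper even uses your degenerate-model device itself, in the proof of Lemma~\ref{lem:basic:bounding}, so your proof is fully consistent with the surrounding machinery.
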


\subsection{Properties of the global lower expectations}\label{sec:properties:of:global:expectations}
The global lower and upper expectations introduced in Section~\ref{sec:global:models} and extended in Section~\ref{sec:properties:of:global:models} have special properties when we restrict ourselves to imprecise Markov chains.
We explore them in this section.

We begin with a few preliminary remarks.
In this context, we can identify $\pths$ with $\stateset^\nats$ and paths $\pth$ with elements of $\stateset^\nats$.
We will do so freely from now on.
Similarly, any situation $s\neq\init$ can be identified with some sequence of states $\xvalto{n}\in\statesto{n}$ for some $n\in\nats$.

Recalling that $\statesto{n}=\stateset^{n}$ allows us to concatenate situations $s$ with other situations $t$ into new situations $st$; the initial situation $\init$ works as the neutral element for this operation.

We can also concatenate situations $s$ and paths $\pth$ into new paths $s\pth$.
This allows us to use a situation $s$ to construct a new variable $g\coloneqq f(s\andpath)$ from a variable $f$ by letting
\begin{equation*}
g(\pth)\coloneqq f(s\pth)\quad\text{for all $\pth\in\pths$}
\end{equation*}
We say that a variable $g$ \emph{does not depend on the first $n$ states $\stateto{n}$}---with $n\in\natswithzero$ if
\begin{equation*}
g(s\andpath)=g(t\andpath)
\quad\text{for all $s,t\in\statesto{n}$},
\end{equation*}
which of course implies that there is some variable $f$ such that $g(s\andpath)=f$ for all $s\in\statesto{n}$.

We assume that we have an imprecise Markov chain with marginal model $\lmargin{\cdot}$ and transition models $\ltransition{\cdot}{x}$, $x\in\stateset$, or equivalently, a lower transition operator $\ltrans$.

We first extend these local transition models from bounded to extended real maps.
In accordance with what we have found in Corollary~\ref{cor:global:and:local}, we extend the local models $\ltransition{\cdot}{x}$ and the corresponding lower transition operator $\ltrans$ to extended real maps $g\colon\stateset\to\xreals$ on $\stateset$ by letting
\begin{equation}\label{eq:ltrans:extended}
\ltrans g(x)
\coloneqq\ltransition{g}{x}
\coloneqq\sup\cset{\ltransition{h}{x}}{h\in\gambles\text{ and }h\leq g}
\text{ for all $x\in\stateset$}.
\end{equation}
That this is indeed an extension follows from the monotonicity [\ref{it:lex:monotone}] of $\ltransition{\cdot}{x}$.

Similarly to what we did in the previous section, for any $n\in\nats$ and any extended real map $g$ on $\stateset$, we denote by $\ljoint[n](g)$ the value of the (global) lower expectation $\ljoint(g(\state{n}))$ of the extended real variable $g(\state{n})$ that only depends on the state $\state{n}$ at time $n$: $\ljoint[n](g)\coloneqq\ljoint(g(\state{n}))=\sitinlglobal{g(\state{n})}{\init}$.
Recall as a special case of Corollary~\ref{cor:global:and:local} [for $n=0$] that for any extended real map $g$ on $\stateset$:
\begin{equation}\label{eq:marginal:extended}
\ljoint[1](g)
=\sitinlglobal{g(\state{1})}{\init}
=\sup\cset{\sitinllocal{h}{\init}}{h\in\gambles\text{ and }h\leq g}.
\end{equation}

Also, when the lower transition operator $\ltrans$ is {\pflike}, and therefore has a unique stationary lower expectation $\ljoint[\infty]$ on $\gambles$, we can extend this lower expectation to extended real maps $g$ on $\stateset$ by similarly letting
\begin{equation}\label{eq:stationary:expectation:extended}
\ljoint[\infty](g)
\coloneqq\sup\cset{\ljoint[\infty](h)}{h\in\gambles\text{ and }h\leq g}.
\end{equation}
This extended functional then satisfies a similar invariance property:

\begin{proposition}\label{prop:extended:remains:stationary}
Assume that\/ $\ltrans$ is {\pflike}.
Then $\ljoint[\infty](g)=\ljoint[\infty](\ltrans g)$ for any extended real map $g$ on $\stateset$.  
\end{proposition}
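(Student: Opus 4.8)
The plan is to establish the two inequalities $\ljoint[\infty](g)\le\ljoint[\infty](\ltrans g)$ and $\ljoint[\infty](\ltrans g)\le\ljoint[\infty](g)$ separately, in each case reducing the claim to the already-established invariance $\ljoint[\infty](h)=\ljoint[\infty](\ltrans h)$ for \emph{bounded} gambles $h$ (Proposition~\ref{prop:perron:frobenius}) by sandwiching the extended maps between dominated gambles, as dictated by the defining Equations~\eqref{eq:ltrans:extended} and~\eqref{eq:stationary:expectation:extended}. Throughout I would rely on the finiteness of $\stateset$ and on the fact that both the local models $\ltransition{\cdot}{x}$ and the lower expectation $\ljoint[\infty]$ are monotone [\ref{it:lex:monotone}] and constant-additive [\ref{it:lex:constant:additivity}] on $\gambles$.

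For the inequality $\ljoint[\infty](g)\le\ljoint[\infty](\ltrans g)$, I would take any $h\in\gambles$ with $h\le g$. Then $\ltrans h\in\gambles$, and since $h$ is itself one of the gambles dominated by $g$, Equation~\eqref{eq:ltrans:extended} gives $\ltrans h\le\ltrans g$ pointwise. Invariance on gambles yields $\ljoint[\infty](h)=\ljoint[\infty](\ltrans h)$, and because $\ltrans h$ is a gamble dominated by the extended map $\ltrans g$, Equation~\eqref{eq:stationary:expectation:extended} gives $\ljoint[\infty](\ltrans h)\le\ljoint[\infty](\ltrans g)$. Hence $\ljoint[\infty](h)\le\ljoint[\infty](\ltrans g)$, and taking the supremum over all such $h$ (again via~\eqref{eq:stationary:expectation:extended}) settles this direction.

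The harder direction is $\ljoint[\infty](\ltrans g)\le\ljoint[\infty](g)$, and this is where finiteness of $\stateset$ does the real work. Fix any $k\in\gambles$ with $k\le\ltrans g$ and any $\epsilon>0$. For each state $x$, the definition $\ltrans g(x)=\sup\cset{\ltransition{h}{x}}{h\in\gambles,\,h\le g}$ together with $k(x)\le\ltrans g(x)$ lets me pick a gamble $h_x\le g$ with $\ltransition{h_x}{x}>k(x)-\epsilon$ (taking $\ltransition{h_x}{x}$ arbitrarily large when $\ltrans g(x)=+\infty$). The crucial gluing step is to set $h\coloneqq\max_{x\in\stateset}h_x$: since $\stateset$ is finite this is again a bounded gamble, it still satisfies $h\le g$, and monotonicity of each $\ltransition{\cdot}{x}$ gives $\ltrans h(x)\ge\ltransition{h_x}{x}>k(x)-\epsilon$ for every $x$, that is $\ltrans h\ge k-\epsilon$. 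Invariance and monotonicity [\ref{it:lex:monotone}] of $\ljoint[\infty]$ on gambles, together with constant additivity [\ref{it:lex:constant:additivity}], then give $\ljoint[\infty](h)=\ljoint[\infty](\ltrans h)\ge\ljoint[\infty](k)-\epsilon$, while $h\le g$ forces $\ljoint[\infty](h)\le\ljoint[\infty](g)$ by~\eqref{eq:stationary:expectation:extended}. Letting $\epsilon\to0$ and taking the supremum over all $k\le\ltrans g$ closes the argument.

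The main obstacle is exactly this gluing: a priori the gambles $h_x$ witnessing $\ltrans g(x)$ differ from state to state, and one must manufacture a \emph{single} gamble $h\le g$ whose transform $\ltrans h$ dominates $k$ simultaneously at every state; finiteness of $\stateset$ and monotonicity of $\ltrans$ make $\max_{x}h_x$ do precisely this. The only remaining bookkeeping is the degenerate case where $g$ attains $-\infty$ somewhere, so that no gamble lies below $g$: then $\ljoint[\infty](g)=-\infty$ and likewise $\ltrans g\equiv-\infty$ and $\ljoint[\infty](\ltrans g)=-\infty$, and the identity holds trivially under the stated conventions.
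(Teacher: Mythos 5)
Your proof is correct and follows essentially the same route as the paper's: the easy direction via monotonicity of $\ltrans$ and of the extension, and the converse direction by producing, for each gamble $k\leq\ltrans g$ and each $\epsilon>0$, a single gamble $h\leq g$ with $\ltrans h\geq k-\epsilon$ and then invoking invariance on gambles. Your explicit gluing step $h\coloneqq\max_{x\in\stateset}h_x$ is exactly the detail the paper compresses into the phrase ``since $\stateset$ is finite, \dots\ there is some $h_\epsilon\in\gambles$ such that $h_\epsilon\leq g$ and $h\leq\ltrans h_\epsilon+\epsilon$'', and your treatment of the degenerate case is consistent with theirs.
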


\begin{proof}
Observe that
\begin{align*}
\ljoint[\infty](g)
&=\sup\cset{\ljoint[\infty](h)}{h\in\gambles\text{ and }h\leq g}\\
&\leq\sup\cset{\ljoint[\infty](h)}{h\in\gambles\text{ and }\ltrans h\leq\ltrans g}\\
&=\sup\cset{\ljoint[\infty](\ltrans h)}{h\in\gambles\text{ and }\ltrans h\leq\ltrans g}\\
&\leq\sup\cset{\ljoint[\infty](h')}{h'\in\gambles\text{ and }h'\leq\ltrans g}
=\ljoint[\infty](\ltrans g),
\end{align*}
where the first inequality follows because $h\leq g$ implies that $\ltrans h\leq\ltrans g$ [use Equation~\eqref{eq:ltrans:extended}], the second equality because $\ljoint[\infty](h)=\ljoint[\infty](\ltrans h)$ [use Proposition~\ref{prop:perron:frobenius}], and the last inequality because $\ltrans h\in\gambles$ [\ref{it:lex:more:bounds}].

For the converse inequality $\ljoint[\infty](\ltrans g)\leq\ljoint[\infty](g)$, fix any $\epsilon>0$. 
We may assume without loss of generality that there is some $h\in\gambles$ such that $h\leq\ltrans g$: otherwise $\ljoint[\infty](\ltrans g)=-\infty$ by Equation~\eqref{eq:stationary:expectation:extended}, and the converse inequality holds trivially.
So consider any such $h$. 
Since $\stateset$ is finite, it follows from Equation~\eqref{eq:ltrans:extended} and the monotonicity of $\ltrans$ [which follows easily from Equation~\eqref{eq:ltrans:extended}] that there is some $h_\epsilon\in\gambles$ such that $h_\epsilon\leq g$ and $h\leq\ltrans h_\epsilon+\epsilon$.
The monotonicity and constant additivity of $\ljoint[\infty]$ [which follow easily from Equation~\eqref{eq:stationary:expectation:extended}] then imply that $\ljoint[\infty](h)\leq\ljoint[\infty](\ltrans h_\epsilon)+\epsilon$ and $\ljoint[\infty](h_\epsilon)\leq\ljoint[\infty](g)$, whence, since $\ljoint[\infty](\ltrans h_\epsilon)=\ljoint[\infty](h_\epsilon)$ [use Proposition~\ref{prop:perron:frobenius}] also $\ljoint[\infty](h)\leq\ljoint[\infty](g)+\epsilon$. 
Since this inequality holds for any $h\in\gambles$ such that $h\leq\ltrans g$, it follows from Equation~\eqref{eq:stationary:expectation:extended} that $\ljoint[\infty](\ltrans g)\leq\ljoint[\infty](g)+\epsilon$.
Since this inequality holds for all $\epsilon>0$, we are done.
\end{proof}

We are now ready to start our analysis.
Our first, basic result is a Markov property for the global models.
It states that all global conditional models are completely determined by the global conditional models $\sitinlglobal{\cdot}{x}$, $x\in\stateset$:

\begin{proposition}[Markov property for global models]\label{prop:markov:property:essential}
Consider any extended real variable $f$, any situation $s\in\sits$ and any $x\in\stateset$, then
\begin{equation*}
\sitinlglobal{f}{sx}
=\sitinlglobal{f(s\andpath)}{x}.
\end{equation*}
\end{proposition}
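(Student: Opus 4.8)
The plan is to establish the two inequalities $\sitinlglobal{f}{sx}\le\sitinlglobal{f(s\andpath)}{x}$ and $\sitinlglobal{f}{sx}\ge\sitinlglobal{f(s\andpath)}{x}$ separately, in each case by exhibiting an explicit, value-preserving correspondence between the bounded-above submartingales admissible in the defining formula~\eqref{eq:global:lower:extended} for either side. Writing $m\coloneqq\len(s)$ and $g\coloneqq f(s\andpath)$, the underlying geometric fact is that left-concatenation with $s$ is a bijection between the situations $xu$ following $x$ and the situations $sxu$ following $sx$, and likewise a bijection $\exact{x}\to\exact{sx}$ sending $x\tau\mapsto sx\tau$. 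Under it, $g$ matches $f$: for $\omega=x\tau\in\exact{x}$ we have $g(\omega)=f(s\omega)=f(sx\tau)$, which is precisely $f$ evaluated at the corresponding path of $\exact{sx}$.

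First I would take any $\submartin\in\basubmartins$ admissible for $\sitinlglobal{f}{sx}$, i.e.\ with $\limsup\submartin(\pth)\le f(\pth)$ for all $\pth\in\exact{sx}$, and, using the standard device already exploited in Propositions~\ref{prop:global:models:gambles:alternative} and~\ref{prop:finite:horizon}, I may assume $\submartin$ is constant off the subtree rooted at $sx$. I then define a process $\submartin'$ by $\submartin'(xu)\coloneqq\submartin(sxu)$ on situations following $x$ and $\submartin'(t)\coloneqq\submartin(sx)$ elsewhere. The key point is that $\submartin'$ is again a bounded-above submartingale, and this is exactly where the time-homogeneous Markov condition~\eqref{eq:markov:condition} does the essential work. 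Indeed, $\Delta\submartin'(xu)$ equals $\Delta\submartin(sxu)$ as a gamble on the next state, while the situations $xu$ and $sxu$ share the same last state $z$; hence by~\eqref{eq:markov:condition} their local lower expectations both equal $\ltransition{\cdot}{z}$, so that $\ltransition{\Delta\submartin'(xu)}{z}=\ltransition{\Delta\submartin(sxu)}{z}\ge0$. Off the subtree all differences vanish, so the submartingale property holds everywhere. Boundedness above and the constraint $\limsup\submartin'(\omega)\le g(\omega)$ on $\exact{x}$ transfer directly through the path bijection (since $\submartin'(\omega^n)=\submartin(\pth^{m+n})$ along corresponding paths), and $\submartin'(x)=\submartin(sx)$. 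Thus each admissible $\submartin$ produces an admissible $\submartin'$ with the same conditioning-situation value, whence $\sitinlglobal{f}{sx}\le\sitinlglobal{g}{x}$.

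The converse inequality follows by running the identical construction in reverse, since the map $\submartin\leftrightarrow\submartin'$ is a bijection: from $\submartin'$ admissible for $\sitinlglobal{g}{x}$ one builds $\submartin(sxu)\coloneqq\submartin'(xu)$, constant elsewhere, which is admissible for $\sitinlglobal{f}{sx}$ with $\submartin(sx)=\submartin'(x)$. I expect the one genuinely load-bearing step to be this submartingale verification: it is tempting to regard any time shift as harmless, but it is \emph{only} the combination of the Markov property (locality in the last state) \emph{and} time-homogeneity (the same operator $\ltransition{\cdot}{\cdot}$ at every time) that forces the local models at $xu$ and $sxu$ to be literally equal despite the $m$-step difference in depth. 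The remaining bookkeeping---constancy off the subtree, preservation of boundedness above, and the behaviour of $\limsup$ along shifted paths---is routine and parallels arguments already used in Section~\ref{sec:properties:of:global:models}.
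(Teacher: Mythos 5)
Your proof is correct and follows essentially the same route as the paper's: both directions transport a bounded-above submartingale between the subtrees rooted at $sx$ and at $x$, with the time-homogeneous Markov condition~\eqref{eq:markov:condition} guaranteeing that corresponding situations carry identical local models, so that the submartingale property and the $\limsup$ constraint are preserved while the value at the conditioning situation is unchanged. If anything, your first-direction construction (copying only the subtree and freezing the process at the constant value $\submartin(sx)$ elsewhere, so that all differences off the subtree vanish) is slightly more careful than the paper's global shift $\submartin'(u)\coloneqq\submartin(su)$ for all $u\in\sits$, which glosses over the fact that the local model at $\init$ is the marginal $\lmargin{\cdot}$ rather than the transition model attached to $s$.
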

\noindent
A perhaps more familiar way of writing this is $\sitinlglobal{f(\state{1}\state{2}\dots)}{sx}=\sitinlglobal{f(s\state{1}\state{2}\dots)}{x}$.

\begin{proof}
Consider, for ease of notation, the extended real variable $g\coloneqq f(s\andpath)$. 
Consider any bounded above submartingale $\submartin$ such that $\limsup\submartin(sx\andpath)\leq f(sx\andpath)$, and let $\submartin'$ be the real process defined by $\submartin'(u)\coloneqq\submartin(su)$ for all $u\in\sits$.
$\submartin'$ is clearly a bounded above submartingale because $\submartin$ is, and moreover $\submartin'(x)=\submartin(sx)$ and $\limsup\submartin'(x\andpath)=\limsup\submartin(sx\andpath)\leq f(sx\andpath)=g(x\andpath)$, whence, by Equation~\eqref{eq:global:lower:extended}
\begin{align*}
\sitinlglobal{f}{sx}
&=\sup\cset{\submartin(sx)}
{\submartin\in\basubmartins\text{ and }\limsup\submartin(sx\andpath)\leq f(sx\andpath)}\\
&\leq\sup\cset{\submartin'(x)}
{\submartin'\in\basubmartins\text{ and }\limsup\submartin'(x\andpath)\leq g(x\andpath)}
=\sitinlglobal{g}{x}.
\end{align*}
Conversely, consider any bounded above submartingale $\submartin$ such that $\limsup\submartin(x\andpath)\leq g(x\andpath)$, and let $\submartin'$ be the real process defined by letting $\submartin'(sxu)\coloneqq\submartin(xu)$ for all $u\in\sits$, and letting $\submartin'(t)\coloneqq\submartin(sx)$ in all situations $t$ that do not follow $sx$.
Then $\submartin'$ is clearly a bounded above submartingale because $\submartin$ is, and moreover $\submartin'(sx)=\submartin(x)$ and $\limsup\submartin'(sx\andpath)=\limsup\submartin(x\andpath)\leq g(x\andpath)=f(sx\andpath)$, whence, again by Equation~\eqref{eq:global:lower:extended}
\begin{align*}
\sitinlglobal{g}{x}
&=\sup\cset{\submartin(x)}
{\submartin\in\basubmartins\text{ and }\limsup\submartin(x\andpath)\leq g(x\andpath)}\\
&\leq\sup\cset{\submartin'(sx)}
{\submartin'\in\basubmartins\text{ and }\limsup\submartin'(sx\andpath)\leq f(sx\andpath)}
=\sitinlglobal{f}{sx}.\qedhere
\end{align*}
\end{proof}
\noindent
This allows us to introduce a new notation $\newlglobal{n}{g}{x}$ for conditional lower expectations of extended real variables $g$ that do not depend on the first $n-1$ states $\stateto{n-1}$, with $n\in\nats$:
\begin{equation*}
\newlglobal{n}{g}{x}
\coloneqq\sitinlglobal{g}{sx}
=\sitinlglobal{g(s\andpath)}{x},
\end{equation*}
where $s$ is any situation of length $n-1$.
Obviously, $\newlglobal{n}{g}{\cdot}$ is an extended real-valued map on $\stateset$.

We can now prove a number of related corollaries to our general law of iterated lower expectations, formulated in Theorem~\ref{thm:law:of:iterated:expectations:general}.

\begin{corollary}\label{corol:LILE1}
Let $n\in\nats$ and $k\in\natswithzero$, and consider any extended real variable $g$ that does not depend on the first $n+k-1$ states.
Then
\begin{equation*}
\newlglobal{n}{g}{\cdot}=\ltrans^k\newlglobal{n+k}{g}{\cdot}.
\end{equation*}  
\end{corollary}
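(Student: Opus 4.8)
The plan is to isolate a one-step identity and then iterate it. Since $\ltrans^{0}$ is the identity map, the case $k=0$ is immediate, so I would take $k\geq1$. The single ingredient I would establish is the following: whenever a variable $g$ does not depend on the first $m$ states (with $m\in\nats$), one has
\[
\newlglobal{m}{g}{\cdot}=\ltrans\newlglobal{m+1}{g}{\cdot}.
\]
Granting this, the corollary follows by applying it in turn for $m=n,n+1,\dots,n+k-1$, giving $\newlglobal{n}{g}{\cdot}=\ltrans\newlglobal{n+1}{g}{\cdot}=\dots=\ltrans^{k}\newlglobal{n+k}{g}{\cdot}$. The one thing to check in this chain is the independence hypothesis: the application at level $m$ needs $g$ not to depend on the first $m$ states, and the strongest demand, at $m=n+k-1$, is exactly the hypothesis of the corollary; since not depending on the first $N$ states implies the same for every smaller index, all earlier applications are licensed as well.

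To prove the one-step relation I would fix a situation $s$ of length $m-1$ and a state $x$, so that $sx$ has length $m$ and $\newlglobal{m}{g}{x}=\sitinlglobal{g}{sx}$. Applying the law of iterated lower expectations (Theorem~\ref{thm:law:of:iterated:expectations:general}) with conditioning levels $m$ and $m+1$ and reading off the value of the resulting $m$-measurable variables at $sx$ gives $\sitinlglobal{g}{sx}=\sitinlglobal{\statetoinlglobal{g}{m+1}}{sx}$. The decisive step is to recognise the inner variable $\statetoinlglobal{g}{m+1}$ as a function of a single state: its value at a length-$(m+1)$ situation $t\,\xval{m+1}$ (with $t$ of length $m$) is $\sitinlglobal{g}{t\,\xval{m+1}}$, and because $g$ does not depend on the first $m$ states, the Markov property for global models (Proposition~\ref{prop:markov:property:essential}) collapses this to $\newlglobal{m+1}{g}{\xval{m+1}}$, which depends on $\xval{m+1}$ alone. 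Hence $\statetoinlglobal{g}{m+1}=h(\state{m+1})$ for the extended real map $h\coloneqq\newlglobal{m+1}{g}{\cdot}$ on $\stateset$.

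It then remains to compute $\sitinlglobal{h(\state{m+1})}{sx}$. As $h(\state{m+1})$ is $(m+1)$-measurable, Corollary~\ref{cor:global:and:local} (with conditioning level $m$) rewrites this as the extended local model evaluated at $sx$, namely $\xtoinllocal{h}{m}$ at $\xvalto{m}=sx$; the Markov condition~\eqref{eq:markov:condition} reduces the latter to $\ltransition{h}{x}$, and the extended definition of the lower transition operator in~\eqref{eq:ltrans:extended} identifies it with $\ltrans h(x)$. Since $h=\newlglobal{m+1}{g}{\cdot}$ and $x$ was arbitrary, this yields the one-step relation. I expect the genuine obstacle to be the middle step---showing that $\statetoinlglobal{g}{m+1}$ really is a bona fide single-state function of $\state{m+1}$---since that is where both the Markov property and the independence hypothesis are actually used; the remaining manipulations are routine, and because the law of iterated expectations, the Markov property, and Corollary~\ref{cor:global:and:local} are all already stated for extended real variables, no separate treatment of the $\pm\infty$ values is required.
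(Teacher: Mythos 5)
Your proposal is correct and follows essentially the same route as the paper: reduce to the one-step case, apply the law of iterated lower expectations (Theorem~\ref{thm:law:of:iterated:expectations:general}), use the independence hypothesis together with the Markov property to recognise the inner conditional as a single-state map $h=\newlglobal{m+1}{g}{\cdot}$, and then convert $\sitinlglobal{h(\state{m+1})}{sx}$ into $\ltrans h(x)$ via Corollary~\ref{cor:global:and:local}, the Markov condition~\eqref{eq:markov:condition} and Equation~\eqref{eq:ltrans:extended}. The only cosmetic difference is that you identify the inner variable as a single-state function on the whole tree, whereas the paper does so only on the relevant sub-tree via Proposition~\ref{prop:plug:in:the:conditional}; both are valid.
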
 

\begin{proof}
It clearly suffices to give the proof for $k=1$.
So consider any extended real variable $g$ that does not depend on the first $n$ states, and any $x\in\stateset$.  
We prove that $\newlglobal{n}{g}{x}=\ltransition{\newlglobal{n+1}{g}{\cdot}}{x}$.

Consider any $\xvalto{n-1}\in\statesto{n-1}$, then it follows from Theorem~\ref{thm:law:of:iterated:expectations:general} and Proposition~\ref{prop:plug:in:the:conditional} that
\begin{equation}\label{eq:LILE1:intermediate}
\sitinlglobal{g}{\xvalto{n-1}x}
=\sitinlglobal{\sitinlglobal{g}{\stateto{n+1}}}{\xvalto{n-1}x}
=\sitinlglobal{\sitinlglobal{g}{\xvalto{n-1}x\state{n+1}}}{\xvalto{n-1}x}.
\end{equation}
Because $g$ in particular does not depend on the first $n-1$ states, we see that for the left-hand side of this equality: $\sitinlglobal{g}{\xvalto{n-1}x}=\newlglobal{n}{g}{x}$.
We now look at the right-hand side.
Consider the extended real map $h\coloneqq\sitinlglobal{g}{\xvalto{n-1}x\andstate}$ on $\stateset$ and the $n+1$-measurable extended real variable $h(\state{n+1})=\sitinlglobal{g}{\xvalto{n-1}x\state{n+1}}$, then we see that
\begin{align*}
\sitinlglobal{\sitinlglobal{g}{\xvalto{n-1}x\state{n+1}}}{\xvalto{n-1}x}
&=\sitinlglobal{h(\state{n+1})}{\xvalto{n-1}x}\\
&=\sup\cset{\sitinllocal{h'}{\xvalto{n-1}x}}{h'\in\gambles\text{ and }h'\leq h}\\
&=\sup\cset{\sitinllocal{h'}{x}}{h'\in\gambles\text{ and }h'\leq h}
=\sitinllocal{h}{x},
\end{align*}
where the second equality follows from Corollary~\ref{cor:global:and:local}, the third from the Markov property~\eqref{eq:markov:condition} of the local models, and the last from Equation~\eqref{eq:ltrans:extended}.
To complete the proof, consider that, since $g$ does not depend on the first $n$ states, $h=\sitinlglobal{g}{\xvalto{n-1}x\andstate}=\newlglobal{n+1}{g}{\andstate}$.
\end{proof}

\begin{corollary}\label{corol:LILE2}
Let $\ell\in\nats$ and consider any extended real variable $g$ that does not depend on the first $\ell-1$ states.
Then:
\begin{equation*}
\ljoint(g)=\ljoint[1](\ltrans^{\ell-1}\newlglobal{\ell}{g}{\cdot}).
\end{equation*}  
\end{corollary}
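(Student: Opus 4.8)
The plan is to derive this corollary by chaining two earlier results: Corollary~\ref{corol:LILE1}, which propagates the conditional model backward from time $\ell$ to time $1$, and the degenerate ($m=0$) instance of the law of iterated lower expectations (Theorem~\ref{thm:law:of:iterated:expectations:general}), which passes from the conditional model at time $1$ to the unconditional one. The identity thus splits into these two conceptually distinct ingredients, and almost all of the analytic work has already been done in proving those results, so the present argument is essentially a careful substitution.

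First I would apply Corollary~\ref{corol:LILE1} with $n=1$ and $k=\ell-1$. Its hypothesis requires $g$ not to depend on the first $n+k-1=\ell-1$ states, which is precisely the assumption here, so we obtain
$$\newlglobal{1}{g}{\cdot}=\ltrans^{\ell-1}\newlglobal{\ell}{g}{\cdot}.$$
Recalling that the situation of length $n-1=0$ is just $\init$, so that $\newlglobal{1}{g}{x}=\sitinlglobal{g}{x}$, this reduces the whole claim to establishing the single remaining identity $\ljoint(g)=\ljoint[1](\newlglobal{1}{g}{\cdot})$.

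To establish that identity I would invoke Theorem~\ref{thm:law:of:iterated:expectations:general} with $m=0$ and $n=1$, which gives $\statetoinlglobal{g}{0}=\statetoinlglobal{\statetoinlglobal{g}{1}}{0}$. Since $\statesto{0}=\set{\init}$, both outer operators $\statetoinlglobal{\cdot}{0}$ collapse to conditioning on $\init$, so the left-hand side equals $\ljoint(g)$ and the right-hand side equals $\sitinlglobal{\statetoinlglobal{g}{1}}{\init}$. The variable $\statetoinlglobal{g}{1}$ is $1$-measurable and takes the value $\sitinlglobal{g}{\xval{1}}=\newlglobal{1}{g}{\xval{1}}$ on all paths through $\xval{1}$, i.e.\ $\statetoinlglobal{g}{1}=\newlglobal{1}{g}{\state{1}}$. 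By the very definition of $\ljoint[1]$ as a functional on extended real maps, $\sitinlglobal{\newlglobal{1}{g}{\state{1}}}{\init}=\ljoint[1](\newlglobal{1}{g}{\cdot})$, so $\ljoint(g)=\ljoint[1](\newlglobal{1}{g}{\cdot})$; chaining this with the first display yields $\ljoint(g)=\ljoint[1](\ltrans^{\ell-1}\newlglobal{\ell}{g}{\cdot})$.

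I expect no genuine analytic obstacle, since the difficult convergence and coherence arguments live in the results being cited; the only real care needed is bookkeeping. Specifically, one must confirm that the $m=0$ instance of the law of iterated expectations is legitimate (it is, as $0\in\natswithzero$) and that the abstract $n$-measurable variables $\statetoinlglobal{g}{n}$ are correctly identified with the maps-on-$\stateset$ notation $\newlglobal{n}{g}{\cdot}$ and with the extended functional $\ljoint[1]$. Once these identifications are made cleanly, the corollary follows by direct substitution.
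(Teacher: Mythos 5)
Your proposal is correct and follows essentially the same route as the paper's own proof: both invoke Corollary~\ref{corol:LILE1} with $n=1$, $k=\ell-1$, then the law of iterated lower expectations (Theorem~\ref{thm:law:of:iterated:expectations:general}) with $m=0$, $n=1$, and finish by identifying $\statetoinlglobal{g}{1}$ with $\newlglobal{1}{g}{\state{1}}$ so that its unconditional lower expectation equals $\ljoint[1](\newlglobal{1}{g}{\cdot})$. The bookkeeping steps you flag (legitimacy of $m=0$ and the identification of the $1$-measurable variable with a map on $\stateset$) are exactly the points the paper also relies on.
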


\begin{proof}
It follows from Corollary~\ref{corol:LILE1} with $n=1$ and $k=\ell-1$ that $\newlglobal{1}{g}{\cdot}=\ltrans^{\ell-1}\newlglobal{\ell}{g}{\cdot}$. 
Furthermore, by applying Theorem~\ref{thm:law:of:iterated:expectations:general} [for $m\coloneqq0$ and $n\coloneqq1$], we find that $\ljoint(g)=\ljoint(\stateinlglobal{g}{1})$. This establishes the proof because it follows from the definition of $\ljoint[1]$ and $\newlglobal{1}{g}{\cdot}$ that $\ljoint(\stateinlglobal{g}{1})=\ljoint[1](\newlglobal{1}{g}{\cdot})$.
\end{proof}

\begin{corollary}\label{corol:LILE3}
Consider any $n\in\nats$ and any extended real map $f$ on $\stateset$. 
Then $\ljoint[n](f)=\ljoint[k](\ltrans^{n-k}f)=\ljoint[1](\ltrans^{n-1}f)$ for any $1\leq k\leq n$.
\end{corollary}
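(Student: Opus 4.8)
The plan is to reduce the entire statement to the single identity
\[
\ljoint[k](h)=\ljoint[1](\ltrans^{k-1}h),
\]
valid for an arbitrary extended real map $h$ on $\stateset$ and any $k\in\nats$. Granting this, I would apply it with $h\coloneqq\ltrans^{n-k}f$ to obtain $\ljoint[k](\ltrans^{n-k}f)=\ljoint[1](\ltrans^{k-1}\ltrans^{n-k}f)=\ljoint[1](\ltrans^{n-1}f)$, the last step using that the iterates of the lower transition operator compose, so that $\ltrans^{k-1}\ltrans^{n-k}=\ltrans^{n-1}$. This shows that the middle expression equals the right-hand expression for every $k$ with $1\leq k\leq n$; specialising to $k=n$, and using $\ltrans^{0}=\mathrm{id}$, gives $\ljoint[n](f)=\ljoint[1](\ltrans^{n-1}f)$, which is the left-hand expression. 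Hence all three expressions coincide.

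To establish the displayed identity I would invoke Corollary~\ref{corol:LILE2} with $\ell\coloneqq k$, applied to the extended real variable $g\coloneqq h(\state{k})$. Since $g$ depends only on the $k$-th state, it certainly does not depend on the first $k-1$ states, so the hypothesis of Corollary~\ref{corol:LILE2} is satisfied and we get $\ljoint[k](h)=\ljoint(g)=\ljoint[1](\ltrans^{k-1}\newlglobal{k}{g}{\cdot})$. It then remains only to identify the extended real map $\newlglobal{k}{g}{\cdot}$ on $\stateset$.

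The crux is the observation that $\newlglobal{k}{g}{\cdot}=h$. Indeed, fix $x\in\stateset$ and any situation $s$ of length $k-1$; then $sx$ has length $k$, and because $g=h(\state{k})$ depends only on the $k$-th state, $g$ is constant and equal to $h(x)$ on the whole exact event $\exact{sx}$. A global conditional lower expectation of a constant returns that constant---this is immediate from the bounds in Proposition~\ref{prop:properties:of:global:expectations}~[\ref{it:elex:more:bounds}], whose infimum and supremum over $\exact{sx}$ then collapse to $h(x)$---so $\newlglobal{k}{g}{x}=\sitinlglobal{g}{sx}=h(x)$. Substituting $\newlglobal{k}{g}{\cdot}=h$ into the above consequence of Corollary~\ref{corol:LILE2} yields the identity and completes the argument.

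I expect the only genuine subtlety to be bookkeeping rather than mathematics: one must check that the extended-real iterates $\ltrans^{m}$ are well defined (each $\ltrans$ sends extended real maps on $\stateset$ to extended real maps, by Equation~\eqref{eq:ltrans:extended}) and compose as claimed, and that the ``does not depend on the first $k-1$ states'' hypothesis of Corollary~\ref{corol:LILE2} is verified at each application. The conceptual heart is the constant-on-the-exact-event reduction $\newlglobal{k}{g}{\cdot}=h$, which is where the Markov structure encoded in $\newlglobal{k}{\cdot}{\cdot}$ does its work; everything else is a routine consequence of the law of iterated lower expectations.
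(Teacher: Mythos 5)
Your proposal is correct and follows essentially the same route as the paper: both reduce the claim to $\ljoint[k](h)=\ljoint[1](\ltrans^{k-1}h)$ via Corollary~\ref{corol:LILE2} applied to $g\coloneqq h(\state{k})$, identify $\newlglobal{k}{g}{\cdot}=h$ by the constant-on-the-exact-event squeeze from~\ref{it:elex:more:bounds}, and then obtain the middle expression by composing iterates of $\ltrans$. The only cosmetic difference is that you state the key identity once as a lemma and instantiate it with $h=\ltrans^{n-k}f$, whereas the paper runs the same argument twice (for $\ell=n$ and $\ell=k$) and then chains the results.
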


\begin{proof}
We use Corollary~\ref{corol:LILE2} with $\ell\coloneqq n$ and $g\coloneqq f(\state{n})$, leading to
\begin{equation*}
\ljoint[n](f)
=\ljoint(g)
=\ljoint[1](\ltrans^{n-1}\newlglobal{n}{g}{\cdot}),
\end{equation*}
since the extended real variable $g$ only depends on the $n$-the state $\state{n}$, and therefore does not depend on the first $n-1$ states. 
For the same reason, we see that for any $x\in\stateset$, $\newlglobal{n}{g}{x}=\sitinlglobal{g}{sx}=\sitinlglobal{g(sx\state{n+1}\dots)}{sx}=\sitinlglobal{f(x)}{sx}=f(x)$, where $s$ is any situation of length $n-1$, and where the second equality follows from Proposition~\ref{prop:plug:in:the:conditional}, and the last from coherence property~\ref{it:elex:more:bounds}.
Hence $\ljoint[n](f)=\ljoint[1](\ltrans^{n-1}f)$.
Now consider any natural $k\leq n$, then we find in a similar manner that $\ljoint[k](f)=\ljoint[1](\ltrans^{k-1}f)$ and therefore also
\begin{equation*}
\ljoint[1](\ltrans^{n-1}f)
=\ljoint[1](\ltrans^{k-1}\ltrans^{n-k}f)
=\ljoint[k](\ltrans^{n-k}f).\qedhere
\end{equation*}
\end{proof}

\subsection{Shift invariance}\label{sec:shift-invariance}
We introduce the \emph{shift operator} $\theta$ on $\nats$ by letting $\theta(n)\coloneqq n+1$ for all $n\in\nats$.
This induces a shift operator on $\pths$: $\theta\pth$ is the path with $(\theta\pth)_n\coloneqq\pth_{\theta(n)}=\pth_{n+1}$ for all $n\in\nats$.
And this also induces a shift operation on variables $f$: $\theta f$ is the variable defined by $(\theta f)(\pth)\coloneqq f(\theta\pth)$ for all $\pth\in\pths$.

\begin{proposition}\label{prop:shifts:and:dependence}
Let $n\in\natswithzero$.
If the variable $g$ does not depend on the first $n$ states, then $\theta g$ does not depend on the first $n+1$ states.
\end{proposition}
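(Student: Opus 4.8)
The plan is to verify the defining condition for $\theta g$ directly from the definitions of the shift operator and of ``not depending on the first $k$ states''. Recall that $\theta g$ does not depend on the first $n+1$ states exactly when $(\theta g)(s\andpath)=(\theta g)(t\andpath)$ for all situations $s,t\in\statesto{n+1}$. So I would fix two arbitrary such situations $s,t\in\statesto{n+1}$ together with an arbitrary path $\pth\in\pths$, and aim to show that $(\theta g)(s\pth)=(\theta g)(t\pth)$.

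The one computation that does the work is to unravel how the shift acts on a concatenation. Writing $s=(\xval{1},\dots,\xval{n+1})$, the concatenated path $s\pth$ is $(\xval{1},\dots,\xval{n+1},\pth_1,\pth_2,\dots)$, and applying $\theta$ simply deletes its first entry, so that $\theta(s\pth)=(\xval{2},\dots,\xval{n+1},\pth_1,\pth_2,\dots)=s'\pth$, where $s'\coloneqq(\xval{2},\dots,\xval{n+1})\in\statesto{n}$ is the length-$n$ tail of $s$. Hence, by the definition $(\theta g)(\pth)=g(\theta\pth)$,
\begin{equation*}
(\theta g)(s\pth)=g(\theta(s\pth))=g(s'\pth),
\end{equation*}
and the identical manipulation applied to $t=(y_1,\dots,y_{n+1})$ gives $(\theta g)(t\pth)=g(t'\pth)$ with $t'\coloneqq(y_2,\dots,y_{n+1})\in\statesto{n}$.

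To finish, I would invoke the hypothesis: since $s'$ and $t'$ are both situations of length $n$, the assumption that $g$ does not depend on the first $n$ states yields $g(s'\andpath)=g(t'\andpath)$, and in particular $g(s'\pth)=g(t'\pth)$. Chaining this with the two displayed identities gives $(\theta g)(s\pth)=(\theta g)(t\pth)$, which is what we wanted. I do not expect any genuine obstacle here; the only point requiring a little care is the index bookkeeping, namely that prescribing the first $n+1$ entries of a path and then shifting leaves a path whose first $n$ entries are prescribed (these being entries $2$ through $n+1$ of the original). This is precisely the mechanism that converts $(n+1)$-independence of $\theta g$ into the $n$-independence of $g$ that the hypothesis supplies.
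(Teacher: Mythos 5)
Your argument is correct and is essentially the paper's own proof: both decompose a length-$(n+1)$ situation into its first entry and its length-$n$ tail, observe that $\theta$ deletes the first entry of the concatenated path so that $(\theta g)(s\pth)=g(s'\pth)$ with $s'\in\statesto{n}$, and then invoke the hypothesis on $g$. The only cosmetic difference is that the paper phrases the hypothesis via a single variable $f$ with $g(s'\andpath)=f$ for all $s'\in\statesto{n}$, whereas you use the equivalent pairwise formulation.
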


\begin{proof}
Assume $g$ does not depend on the first $n$ states, so there is some variable $f$ such that $g(s\andpath)=f$ for all $s\in\statesto{n}$, where of course $\statesto{n}=\stateset^{n}$.
Then for all $x\in\stateset$, $s\in\stateset^{n}$ and all $\pth\in\pths$:
\begin{equation*}
(\theta g)(xs\pth)
=g(\theta(xs\pth))
=g(s\pth)
=f(\pth),
\end{equation*}
which concludes the proof.
\end{proof}

We call a variable $f$ \emph{shift invariant} if $\theta f=f$, meaning that
\begin{equation*}
f(\pth)=f(\theta\pth)\quad\text{for all $\pth\in\pths$}.
\end{equation*}

\begin{proposition}
A shift invariant variable $f$ does not depend on the first $n$ states $\stateto{n}$, for all $n\in\natswithzero$.
\end{proposition}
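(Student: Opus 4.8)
The plan is to exploit the key bookkeeping fact that applying the shift $\theta$ exactly $n$ times to a concatenated path $s\pth$, with $s\in\statesto{n}$, strips off the prefix $s$ and returns $\pth$; shift invariance then forces $f$ to take the same value on $s\pth$ and on $\pth$, so a length-$n$ prefix cannot influence the value of $f$.

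First I would note that shift invariance iterates: from $\theta f=f$ one immediately obtains $f(\pth)=f(\theta\pth)=f(\theta^2\pth)=\dots=f(\theta^k\pth)$ for every $k\in\natswithzero$ and every $\pth\in\pths$, by a trivial induction on $k$ using the definition $(\theta f)(\pth)=f(\theta\pth)$. Next I would fix $n\in\natswithzero$ and any situation $s=\xvalto{n}\in\statesto{n}$, and verify at the level of coordinates that $\theta^{n}(s\pth)=\pth$: recalling that $(\theta\pth)_n=\pth_{n+1}$, each application of $\theta$ deletes the current leading state, so after $n$ applications the entire length-$n$ prefix $s$ has been removed, leaving precisely $\pth$. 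Taking $k=n$ in the iterated invariance identity then yields $f(s\pth)=f(\theta^{n}(s\pth))=f(\pth)$ for all $\pth\in\pths$; in the notation of the excerpt, $f(s\andpath)=f(\andpath)$ as variables on $\pths$, for every $s\in\statesto{n}$.

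Finally, for any two situations $s,t\in\statesto{n}$ this gives $f(s\andpath)=f(\andpath)=f(t\andpath)$, which is exactly the defining condition that $f$ does not depend on the first $n$ states $\stateto{n}$. The boundary case $n=0$ requires no work, since $\statesto{0}=\set{\init}$ contains a single situation and the condition is vacuous. I expect no genuine obstacle here: the whole argument is a direct iteration of the invariance identity $\theta f=f$, and the only point deserving any care is the elementary verification that $\theta^{n}$ removes exactly the first $n$ states.
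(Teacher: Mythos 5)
Your proof is correct and takes essentially the same route as the paper: the paper deduces the result as an immediate consequence of its one-step lemma (Proposition~\ref{prop:shifts:and:dependence}, stating that applying $\theta$ raises the independence level by one) together with $\theta^{n}f=f$, and your argument simply unrolls that induction into the single direct identity $f(s\pth)=f(\theta^{n}(s\pth))=f(\pth)$ for $s\in\statesto{n}$.
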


\begin{proof}
Immediate consequence of Proposition~\ref{prop:shifts:and:dependence}.
\end{proof}

Another way to understand that a variable $f$ does not depend on the first $n$ states, is that then $f(\pth)=f(s\theta^{n}\pth)$ for all $s\in\statesto{n}$ and $\pth\in\pths$, which we also write as $f=f(s\theta^{n}\andpath)$ for all $s\in\statesto{n}$.

The following propositions tell us that the global lower expectations satisfy a shift invariance property.

\begin{proposition}\label{prop:global:shift:invariance}
Let $n\in\nats$ and consider any extended real variable $g$ that does not depend on the first $n-1$ states.
Then for all $k\in\natswithzero$:
\begin{equation*}
\newlglobal{n}{g}{\cdot}=\newlglobal{n+k}{\theta^kg}{\cdot}.
\end{equation*}  
\end{proposition}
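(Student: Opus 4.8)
The plan is to evaluate both sides by reducing them, via the Markov property of Proposition~\ref{prop:markov:property:essential} together with the very definition of the $\newlglobal{n}{\cdot}{\cdot}$ notation, to global conditional lower expectations given a single state $x$, and then to check that the two extended real variables being conditioned on actually coincide. First I would record that the right-hand side is even well-defined: since $g$ does not depend on the first $n-1$ states, iterating Proposition~\ref{prop:shifts:and:dependence} $k$ times shows that $\theta^k g$ does not depend on the first $(n-1)+k=(n+k)-1$ states, so $\newlglobal{n+k}{\theta^k g}{\cdot}$ makes sense.

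Next, fix $x\in\stateset$ and unfold the definitions. Choosing any situation $s$ of length $n-1$ and any situation $t$ of length $n+k-1$, we have $\newlglobal{n}{g}{x}=\sitinlglobal{g(s\andpath)}{x}$ and $\newlglobal{n+k}{\theta^k g}{x}=\sitinlglobal{(\theta^k g)(t\andpath)}{x}$. The crux is then the identity $g(s\andpath)=(\theta^k g)(t\andpath)$ between these two restricted variables. To establish it I would compute, for an arbitrary path $\pth$, that $(\theta^k g)(t\pth)=g(\theta^k(t\pth))=g(t''\pth)$, where $t''$ is $t$ with its first $k$ states deleted; here the bookkeeping is routine, since $t$ has length $n+k-1\geq k$ (using $n\geq1$), so $t''$ has length exactly $n-1$. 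Because $g$ does not depend on the first $n-1$ states, $g(t''\pth)=g(s\pth)$ for the string $s$ of length $n-1$, which is precisely $g(s\andpath)$ evaluated at $\pth$. Hence the two restricted variables are identical, and applying $\sitinlglobal{\cdot}{x}$ to both yields the claimed equality for every $x$.

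The conceptual content --- and the reason the statement holds --- is the time-homogeneity encoded in the Markov property: once we condition on reaching a given state $x$, the global model $\sitinlglobal{\cdot}{x}$ no longer remembers at which time step $x$ was reached, so shifting the variable by $k$ and starting $k$ steps later leaves everything invariant. Consequently the only genuine work is the string-length bookkeeping above, which I expect to be the main (and fairly minor) obstacle: one must keep careful track of the lengths of $s$, $t$ and $t''$, and of the hypothesis that $g$ ``does not depend on the first $n-1$ states'', to be sure the reductions are legitimate. An equally clean alternative is to prove only the case $k=1$ --- where $t''$ is $t$ with a single leading state removed --- and then obtain the general case by induction on $k$, invoking Proposition~\ref{prop:shifts:and:dependence} at each step to guarantee that $\theta^k g$ does not depend on the first $n+k-1$ states so that the induction hypothesis applies to $\theta^k g$ at position $n+k$.
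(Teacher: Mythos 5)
Your proposal is correct and follows essentially the same route as the paper: reduce both sides via the Markov property (Proposition~\ref{prop:markov:property:essential}) to conditional lower expectations given a single state $x$, and then verify that the two restricted variables coincide by the computation $(\theta^k g)(t\andpath)=g(\theta^k(t\andpath))=g(s\andpath)$, using that $g$ does not depend on the first $n-1$ states. The paper simply does the $k=1$ case and notes that the general case follows, which matches the alternative you mention at the end.
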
 

\begin{proof}
It clearly suffices to prove the statement for $k=1$.
So consider any $s\in\statesto{n-1}$ [recall that $\statesto{n-1}=\stateset^{n-1}$] and any $x,y\in\stateset$, then it follows from Proposition~\ref{prop:markov:property:essential} that $\newlglobal{n}{g}{x}=\sitinlglobal{g}{sx}=\sitinlglobal{g(s\andpath)}{x}$ and $\newlglobal{n+1}{\theta g}{x}=\sitinlglobal{\theta g}{ysx}=\sitinlglobal{\theta g(ys\andpath)}{x}$.
Now observe that $\theta g(ys\andpath)=g(\theta(ys\andpath))=g(s\andpath)$.
\end{proof}

\begin{proposition}\label{prop:shift:global}
For any extended real variable $f$ and any $n\in\natswithzero$:
\begin{equation*}
\ljoint(\theta^{n}f)=\ljoint[n+1](\newlglobal{1}{f}{\cdot}).
\end{equation*}
\end{proposition}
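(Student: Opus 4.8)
The plan is to derive the identity by chaining together three results already proved in this section: Corollary~\ref{corol:LILE2}, the shift-invariance Proposition~\ref{prop:global:shift:invariance}, and Corollary~\ref{corol:LILE3}. No fresh submartingale construction should be needed; the whole argument is an alignment of these lemmas.

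First I would record that $\theta^{n}f$ does not depend on the first $n$ states. Indeed, for any $s\in\statesto{n}$ and $\pth\in\pths$ we have $(\theta^{n}f)(s\pth)=f(\theta^{n}(s\pth))=f(\pth)$, since applying $\theta^{n}$ deletes exactly the length-$n$ prefix $s$ (this also follows by iterating Proposition~\ref{prop:shifts:and:dependence} starting from the vacuous statement that $f$ does not depend on the first $0$ states). In particular $\theta^{n}f$ does not depend on the first $(n+1)-1=n$ states, so Corollary~\ref{corol:LILE2} applies with $\ell\coloneqq n+1$ and $g\coloneqq\theta^{n}f$, yielding $\ljoint(\theta^{n}f)=\ljoint[1](\ltrans^{n}\newlglobal{n+1}{\theta^{n}f}{\cdot})$.

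Second, I would rewrite the inner conditional model using shift invariance. Applying Proposition~\ref{prop:global:shift:invariance} with its index set to $1$---the hypothesis that $f$ does not depend on the first $0$ states again being vacuous---and with its shift parameter set to $n$ gives $\newlglobal{1}{f}{\cdot}=\newlglobal{n+1}{\theta^{n}f}{\cdot}$. Substituting this into the display from the previous step turns its right-hand side into $\ljoint[1](\ltrans^{n}\newlglobal{1}{f}{\cdot})$. Finally, since $\newlglobal{1}{f}{\cdot}$ is an extended real map on $\stateset$, I would invoke Corollary~\ref{corol:LILE3} at time $n+1$ applied to this map, which states precisely that $\ljoint[n+1](\newlglobal{1}{f}{\cdot})=\ljoint[1](\ltrans^{n}\newlglobal{1}{f}{\cdot})$. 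Comparing this with the output of the second step closes the argument.

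I expect the only genuine care to lie in the index bookkeeping---lining up the various \emph{does not depend on the first $\ast$ states} hypotheses with the correct offsets $n$, $n-1$ and $0$---together with a quick check that the degenerate case $n=0$ is subsumed (there $\ltrans^{0}$ is the identity, $\theta^{0}f=f$, and each cited result specialises correctly, so the chain collapses to the law of iterated lower expectations of Theorem~\ref{thm:law:of:iterated:expectations:general} with $m\coloneqq0$, $n\coloneqq1$). There is no deeper obstacle beyond this alignment.
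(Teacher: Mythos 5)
Your proposal is correct and follows essentially the same route as the paper's own proof: it chains Corollary~\ref{corol:LILE2} with $\ell\coloneqq n+1$, Proposition~\ref{prop:global:shift:invariance} (with its first index equal to $1$ and shift parameter $n$), and Corollary~\ref{corol:LILE3} in exactly the same order. The extra remarks on index bookkeeping and the $n=0$ case are harmless checks that the paper leaves implicit.
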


\begin{proof}
Since $\theta^{n}f$ does not depend on the first $n$ states [see Proposition~\ref{prop:shifts:and:dependence}], we infer from Corollary~\ref{corol:LILE2} [with $\ell\coloneqq n+1$] and Proposition~\ref{prop:global:shift:invariance} that indeed
\begin{equation*}
\ljoint(\theta^{n}f)
=\ljoint[1](\ltrans^{n}\newlglobal{n+1}{\theta^{n}f}{\cdot})
=\ljoint[1](\ltrans^{n}\newlglobal{1}{f}{\cdot})
=\ljoint[n+1](\newlglobal{1}{f}{\cdot}), 
\end{equation*}
where the last equality follows from $\ljoint[n+1]=\ljoint[1]\circ\ltrans^{n}$ [see Corollary~\ref{corol:LILE3}].
\end{proof}

As a generalisation of the case for precise Markov chains, we can call an imprecise Markov chain \emph{stationary} or \emph{time invariant} if
\begin{equation*}
\ljoint(f)=\ljoint(\theta f)
\text{ for all extended real variables $f$}.
\end{equation*}
The following proposition gives a simple characterisation of stationarity.

\begin{proposition}[Stationarity]\label{prop:stationarity}
Consider an imprecise Markov chain with marginal lower expectation $\ljoint[1]$ and lower transition\/ operator $\ltrans$ that is {\pflike} with stationary lower expectation $\ljoint[\infty]$.
Then the imprecise Markov chain is stationary if and only if $\ljoint[\infty]=\ljoint[1]$.
\end{proposition}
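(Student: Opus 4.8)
The plan is to prove the two implications separately, in each case reducing the shift-invariance condition for the \emph{global} model $\ljoint$ to a statement about the \emph{marginal} and \emph{stationary} lower expectations on $\stateset$. The two workhorses will be Proposition~\ref{prop:shift:global}, which rewrites $\ljoint(\theta^{n}f)$ as $\ljoint[n+1](\newlglobal{1}{f}{\cdot})$, and Corollary~\ref{corol:LILE3}, which relates the $\ljoint[n]$ through powers of $\ltrans$. One preliminary observation I would record up front: since both $\ljoint[1]$ and $\ljoint[\infty]$ are extended to extended real maps on $\stateset$ by the \emph{same} supremum formula over dominated gambles [Equations~\eqref{eq:marginal:extended} and~\eqref{eq:stationary:expectation:extended}], the equality $\ljoint[\infty]=\ljoint[1]$ on $\gambles$ is equivalent to $\ljoint[\infty]=\ljoint[1]$ on \emph{all} extended real maps on $\stateset$. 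I will use this equivalence freely.

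For the \emph{if} direction, I would assume $\ljoint[\infty]=\ljoint[1]$ and fix an arbitrary extended real variable $f$. Setting $g\coloneqq\newlglobal{1}{f}{\cdot}$, which is an extended real map on $\stateset$, the whole statement collapses to a short chain of identities:
\begin{align*}
\ljoint(f)
&=\ljoint[1](g)
&&\text{[Proposition~\ref{prop:shift:global}, $n=0$]}\\
&=\ljoint[\infty](g)
&&\text{[assumption]}\\
&=\ljoint[\infty](\ltrans g)
&&\text{[Proposition~\ref{prop:extended:remains:stationary}]}\\
&=\ljoint[1](\ltrans g)
&&\text{[assumption]}\\
&=\ljoint[2](g)
&&\text{[Corollary~\ref{corol:LILE3}]}\\
&=\ljoint(\theta f)
&&\text{[Proposition~\ref{prop:shift:global}, $n=1$]},
\end{align*}
so $\ljoint(f)=\ljoint(\theta f)$ and the chain is stationary. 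The crucial input here is the invariance $\ljoint[\infty](g)=\ljoint[\infty](\ltrans g)$ for extended real maps, which is exactly Proposition~\ref{prop:extended:remains:stationary} and relies on {\pflike}ness.

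For the \emph{only if} direction, I would assume stationarity and test it on the simplest variables. Taking $f\coloneqq\phi(\state{1})$ for an arbitrary gamble $\phi\in\gambles$, one has $\theta^{n}f=\phi(\state{n+1})$, so iterating $\ljoint(f)=\ljoint(\theta f)$ yields $\ljoint[1](\phi)=\ljoint(\phi(\state{1}))=\ljoint(\phi(\state{n+1}))=\ljoint[n+1](\phi)$ for every $n\in\natswithzero$. On the other hand, {\pflike}ness (Proposition~\ref{prop:perron:frobenius}) gives $\ljoint[n+1](\phi)=\ljoint[1](\ltrans^{n}\phi)\to\ljoint[\infty](\phi)$. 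Passing to the limit forces $\ljoint[1](\phi)=\ljoint[\infty](\phi)$ for all $\phi\in\gambles$, i.e.\ $\ljoint[1]=\ljoint[\infty]$.

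The content is essentially packaged in the earlier results, so the only real care needed is bookkeeping: checking that each cited result genuinely applies to extended real variables (Proposition~\ref{prop:shift:global}, Corollary~\ref{corol:LILE3} and Proposition~\ref{prop:extended:remains:stationary} are all stated at that level of generality), and confirming that the equality of the two functionals on gambles transfers to their extensions. The step I expect to require the most attention is the \emph{if} direction, where one must verify that $g=\newlglobal{1}{f}{\cdot}$ is a legitimate extended real map on $\stateset$ so that Proposition~\ref{prop:extended:remains:stationary} may be invoked; the \emph{only if} direction is genuinely routine once one restricts to functions of a single state and invokes {\pflike} convergence.
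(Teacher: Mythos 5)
Your proof is correct and follows essentially the same route as the paper's: the \emph{if} direction is the same chain of identities through Proposition~\ref{prop:shift:global}, Corollary~\ref{corol:LILE3} and Proposition~\ref{prop:extended:remains:stationary} (the paper packages it as the operator identity $\ljoint[2]=\ljoint[1]\circ\ltrans=\ljoint[\infty]\circ\ltrans=\ljoint[\infty]=\ljoint[1]$, which you unfold element-wise on $g=\newlglobal{1}{f}{\cdot}$), and the reduction of the \emph{only if} direction to gambles of a single state is identical. The only minor difference is that in the \emph{only if} direction you iterate the shift and pass to the limit $\ljoint[1](\ltrans^{n}\phi)\to\ljoint[\infty](\phi)$, whereas the paper stops at $n=1$, concludes that $\ljoint[1]$ is $\ltrans$-invariant, and invokes the uniqueness clause of Proposition~\ref{prop:perron:frobenius}; both clauses of that proposition do the job equally well.
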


\begin{proof}
Assume that $\ljoint[\infty]=\ljoint[1]$, then $\ljoint[2]=\ljoint[1]\circ\ltrans=\ljoint[\infty]\circ\ltrans=\ljoint[\infty]=\ljoint[1]$, where the first equality follows from Corollary~\ref{corol:LILE3}, and the one but last equality from Proposition~\ref{prop:extended:remains:stationary}.
Hence it follows from Proposition~\ref{prop:shift:global} and Corollary~\ref{corol:LILE2} [with $\ell\coloneqq1$] that for any extended real variable $f$, $\ljoint(\theta f)=\ljoint[2](\newlglobal{1}{f}{\cdot})=\ljoint[1](\newlglobal{1}{f}{\cdot})=\ljoint(f)$. 
Hence the imprecise Markov chain is stationary.

Assume, conversely, that the imprecise Markov chain is stationary.
Let $h$ be any gamble on $\stateset$, and consider the real variables $f\coloneqq h(\state{1})$ and $\theta f=h(\state{2})$. 
Then on the one hand $\ljoint(f)=\ljoint[1](h)$, and on the other hand $\ljoint(\theta f)=\ljoint[2](h)$, so it follows from Corollary~\ref{corol:LILE3} [with $n\coloneqq2$] and stationarity that $\ljoint[1]\circ\ltrans=\ljoint[2]=\ljoint[1]$.
So $\ljoint[1]$ is $\ltrans$-invariant, which implies that $\ljoint[1]=\ljoint[\infty]$ [use Proposition~\ref{prop:perron:frobenius} to get the equality for gambles, which also implies the equalities for their extensions, via Equations~\eqref{eq:marginal:extended} and~\eqref{eq:stationary:expectation:extended}]. 
\end{proof}
\noindent
We gather from Proposition~\ref{prop:stationarity} that, with any {\pflike} lower transition operator $\ltrans$ and associated stationary lower expectation $\ljoint[\infty]$, there always corresponds a unique stationary imprecise Markov chain; its initial model is given by $\sitinllocal{\cdot}{\init}\coloneqq\ljoint[\infty]$. 
We will denote its corresponding (shift-invariant) global lower expectation operator by $\statljoint$.

\section{Transition and return times}\label{sec:transition:and:return:times}
Let us now look at lower (and upper) expected transition and return times, as a simple and elegant example of what can be done using our extensions of the joint lower and upper expectations to an infinite time horizon and extended real variables, and their properties, discussed in the previous section.

Consider two (possibly identical) state values $x$ and $y$ in $\stateset$.
Suppose that the imprecise Markov chain starts out at time $n$ in state value $x$, then we can ask ourselves how long it will take for it to reach the state value $y$, and when $y=x$, for the imprecise Markov chain to return to the state value $x$.
To study this, we introduce the extended real variables $\timefromto[n]{x}{y}$ given by:
\begin{equation}\label{eq:transition:times:path}
\timefromto[n]{x}{y}(\pth)
\coloneqq
\begin{cases}
0
&\quad\text{ if $\pth_n\neq x$}\\
\inf\cset{m\in\nats}{\pth_{n+m}=y}
&\quad\text{ if $\pth_n=x$}.
\end{cases}
\end{equation}
Observe that $\theta\timefromto[n]{x}{y}=\timefromto[n+1]{x}{y}$.
Consider the lower expected time $\sitinlglobal{\timefromto[n]{x}{y}}{sx}$, where $s$ is any situation of length $n-1$.
Then, since $\timefromto[n]{x}{y}$ clearly does not depend on the first $n-1$ states, we infer from Proposition~\ref{prop:markov:property:essential} [the Markov property] that $\sitinlglobal{\timefromto[n]{x}{y}}{sx}=\sitinlglobal{\timefromto[n]{x}{y}(s\andpath)}{x}=\newlglobal{n}{\timefromto[n]{x}{y}}{x}$.
Moreover, we infer from Proposition~\ref{prop:global:shift:invariance} that
\begin{equation*}
\newlglobal{n+1}{\timefromto[n+1]{x}{y}}{x}
=\newlglobal{n+1}{\theta\timefromto[n]{x}{y}}{x}
=\newlglobal{n}{\timefromto[n]{x}{y}}{x},
\end{equation*}
so we conclude that $\sitinlglobal{\timefromto[n]{x}{y}}{sx}$ neither depends on the initial segment $s$, nor on its length $n-1$.
A similar conclusion holds for $\sitinuglobal{\timefromto[n]{x}{y}}{sx}$.
We therefore define the \emph{lower} and \emph{upper expected transition times} from $x$ to $y$ as
\begin{equation}\label{eq:transition:times:shift_inv}
\ltimefromto{x}{y}
\coloneqq\sitinlglobal{\timefromto[1]{x}{y}}{x}
=\newlglobal{n}{\timefromto[n]{x}{y}}{x}
\text{ and }
\utimefromto{x}{y}
\coloneqq\sitinuglobal{\timefromto[1]{x}{y}}{x}
=\newuglobal{n}{\timefromto[n]{x}{y}}{x}.
\end{equation}
When $y=x$, we talk about \emph{return times} rather than transition times.
It follows from~\ref{it:elex:more:bounds} that $\utimefromto{x}{y}\geq\ltimefromto{x}{y}\geq1$.

On any path $x\state{2}\andpath$ that starts in $x$, the following recursion equation is satisfied:
\begin{equation}\label{eq:transition:times:recursion}
\timefromto[1]{x}{y}(x\state{2}\andpath)
\coloneqq
\begin{cases}
1
&\text{ if $\state{2}=y$}\\
1+\timefromto[2]{z}{y}(xz\andpath)
&\text{ if $\state{2}=z\neq y$}
\end{cases}
=1+\sum_{z\neq y}\indsing{z}(\state{2})\timefromto[2]{z}{y}(xz\andpath),
\end{equation}
using our convention that $0\cdot+\infty=0$.
We know from Theorem~\ref{thm:law:of:iterated:expectations:general} and Proposition~\ref{prop:plug:in:the:conditional} that
\begin{equation}\label{eq:transition:intermediate}
\ltimefromto{x}{y}
=\sitinlglobal{\timefromto[1]{x}{y}}{x}
=\sitinlglobal{\sitinlglobal{\timefromto[1]{x}{y}}{x\state{2}}}{x}.
\end{equation}
Moreover, for any $z\in\stateset\setminus\set{y}$, we infer from Equation~\eqref{eq:transition:times:recursion}, Proposition~\ref{prop:plug:in:the:conditional} [repeatedly] and coherence [\ref{it:elex:constant:additivity}] that
\begin{align*}
\sitinlglobal{\timefromto[1]{x}{y}}{xz}
&=\sitinlglobal{\timefromto[1]{x}{y}(xz\state{3}\dots)}{xz}
=\sitinlglobal{1+\timefromto[2]{z}{y}(xz\state{3}\dots)}{xz}\\
&=1+\sitinlglobal{\timefromto[2]{z}{y}(xz\state{3}\dots)}{xz}
=1+\sitinlglobal{\timefromto[2]{z}{y}}{xz}
=1+\ltimefromto{z}{y},
\end{align*}
Similarly, we infer from Equation~\eqref{eq:transition:times:recursion}, Proposition~\ref{prop:plug:in:the:conditional} and coherence [\ref{it:elex:more:bounds}] that
\begin{align*}
\sitinlglobal{\timefromto[1]{x}{y}}{xy}
&=\sitinlglobal{\timefromto[1]{x}{y}(xy\state{3}\dots)}{xy}
=\sitinlglobal{1}{xy}
=1.
\end{align*}
Hence
\begin{equation*}
\sitinlglobal{\timefromto[1]{x}{y}}{x\state{2}}
=1+\sum_{z\in\stateset\setminus\set{y}}\indsing{z}(\state{2})\ltimefromto{z}{y},
\end{equation*} 
and if we plug this expression into the local conditional expectation on the right-hand side of Equation~\eqref{eq:transition:intermediate}, and use coherence [\ref{it:elex:constant:additivity}], Corollary~\ref{cor:global:and:local} and Equation~\ref{eq:ltrans:extended}, we are led to the following system of non-linear equations for the lower transition (and return) times:
\begin{equation}\label{eq:transition:times:lower}
\ltimefromto{x}{y}
=1+\ltrans\group[\bigg]{\sum_{z\in\stateset\setminus\set{y}}\indsing{z}\ltimefromto{z}{y}}(x)
\text{ for all $x,y\in\stateset$}.
\end{equation}
A completely analogous argument leads to the corresponding system for the upper transition and return times:
\begin{equation}\label{eq:transition:times:upper}
\utimefromto{x}{y}
=1+\utrans\group[\bigg]{\sum_{z\in\stateset\setminus\set{y}}\indsing{z}\utimefromto{z}{y}}(x) 
\text{ for all $x,y\in\stateset$}.
\end{equation}
Finding a general solution to these systems is a difficult task, which we will not tackle here; for the special case of imprecise birth-death chains, see Ref.~\cite{lopatatzid2015}. 










We end this section by solving the following simple binary case. Let $\stateset=\set{a,b}$ and let
\begin{equation}\label{eq:localmodelsexample}
\ltrans h(x)=\ljoint[1](h)=(1-\epsilon)\frac{h(a)+h(b)}{2}+\epsilon\min h
\text{ for all $h\in\gambles$ and $x\in\stateset$,}
\end{equation}
with $\epsilon\in(0,1)$.
It is clear that $\ltrans^{n}h=\ljoint[1](h)$ and therefore this imprecise Markov chain is {\pflike}, with $\ljoint[\infty]=\ljoint[1]$, so it is stationary as well; see Proposition~\ref{prop:stationarity}.
Since the transition model $\ltransition{\cdot}{x}=\ljoint[1]$ is the same for all state values $x\in\stateset$, this is an imprecise-probabilistic version of a Bernoulli (iid) process,\footnote{There are various ways to generalise a Bernoulli process to an imprecise probabilities context; see Ref.~\cite{debock2012:bernoulli} for discussion.} with
\begin{equation*}
\ltheta_a
=1-\utheta_b
\coloneqq\ljoint[1](\indsing{a})
=\frac{1}{2}-\frac{\epsilon}{2}
\text{ and }
\utheta_a
=1-\ltheta_b
\coloneqq\ujoint[1](\indsing{a})
=\frac{1}{2}+\frac{\epsilon}{2}.
\end{equation*}
In this simple binary case, Equation~\eqref{eq:transition:times:lower} can be significantly simplified. 
For example, for $y=a$ and any $x\in\{a,b\}$, we find that
\begin{equation}\label{eq:simplifiedsystembinary}
\ltimefromto{x}{a}
=1+\ltrans(\indsing{b}\ltimefromto{b}{a})(x)
=1+\ljoint[1](\indsing{b}\ltimefromto{b}{a})
=1+\ltimefromto{b}{a}\ljoint[1](\indsing{b})
=1+\ltimefromto{b}{a}\ltheta_b,
\end{equation}
where the second equality follows from Equation~\eqref{eq:localmodelsexample} and the third one from coherence [\ref{it:lex:homo}] and the following lemma.

\begin{lemma}\label{lemma:binaryexample}
Consider a binary imprecise Markov chain with state space $\states{}=\{a,b\}$ whose local models are given by Equation~\eqref{eq:localmodelsexample}, with $\epsilon\in(0,1)$. Then
$1\leq\ltimefromto{b}{a}\leq\utimefromto{b}{a}<+\infty$.
\end{lemma}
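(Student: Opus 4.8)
The inequalities $1 \leq \ltimefromto{b}{a} \leq \utimefromto{b}{a}$ have already been established in full generality, just after Equation~\eqref{eq:transition:times:shift_inv}, as a consequence of coherence~\ref{it:elex:more:bounds}; so the only thing that genuinely requires proof is the finiteness $\utimefromto{b}{a} < +\infty$. It is tempting to read this off the recursion~\eqref{eq:transition:times:upper}, which in this binary case collapses to $\utimefromto{b}{a} = 1 + \utheta_b\,\utimefromto{b}{a}$ and would "solve" to $\utimefromto{b}{a} = 1/(1-\utheta_b) = 1/\ltheta_a = 2/(1-\epsilon)$. The catch is that this equation holds in the extended reals and is equally satisfied by $+\infty$ (since $\utheta_b = (1+\epsilon)/2 > 0$, we have $+\infty = 1 + \utheta_b\cdot(+\infty)$), so solving it does \emph{not} by itself exclude an infinite value. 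My plan is therefore to prove finiteness directly, by exhibiting a single supermartingale that dominates $\timefromto[1]{b}{a}$ and has a finite value at $b$.

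The key reduction is that, by Equation~\eqref{eq:global:upper:extended}, it suffices to produce one $\supermartin \in \bbsupermartins$ with $\liminf\supermartin(\pth) \geq \timefromto[1]{b}{a}(\pth)$ for all $\pth \in \exact{b}$: since such an $\supermartin$ is a real process, $\supermartin(b)$ is automatically a finite real number that bounds $\utimefromto{b}{a} = \sitinuglobal{\timefromto[1]{b}{a}}{b}$ from above. I would guess the value $t^\ast \coloneqq 1/\ltheta_a = 2/(1-\epsilon)$ suggested by the recursion, and define $\supermartin$ on the situations that follow $b$ as follows: along a run of $b$'s of length $n$ (no $a$ after time $1$ yet) set $\supermartin$ equal to $(n-1) + t^\ast$, and as soon as $a$ is first reached at relative time $m_0$ freeze the value at $m_0$; on the part of the tree that does not follow $b$ (including $\init$), set $\supermartin$ constant equal to $t^\ast$.

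The verification then splits into three essentially routine checks. First, $\supermartin$ is bounded below (every value is at least $1$), so $\supermartin \in \bbsupermartins$. Second, $\supermartin$ is a supermartingale: the only non-trivial situations are the running-$b$ ones, where the difference gamble $\Delta\supermartin$ equals $1$ at $b$ and $1 - t^\ast$ at $a$; computing the conjugate upper operator $\utrans h(x) = (1-\epsilon)\tfrac{h(a)+h(b)}{2} + \epsilon\max h$ from Equation~\eqref{eq:localmodelsexample}, and using $t^\ast > 1$ so that the maximum is attained at $b$, the choice $t^\ast = 2/(1-\epsilon)$ makes $\utrans(\Delta\supermartin) = 0 \leq 0$ exactly --- this single identity is the crux. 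Third, $\liminf\supermartin \geq \timefromto[1]{b}{a}$ on $\exact{b}$: on any path that reaches $a$ the process is eventually constant and equal to the transition time, while on the path $bbb\cdots$ both sides equal $+\infty$ (recalling the convention $\inf\emptyset = +\infty$). One then concludes $\utimefromto{b}{a} \leq \supermartin(b) = t^\ast = 2/(1-\epsilon) < +\infty$.

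The main obstacle is conceptual rather than computational: one must recognise that the recursive equation is consistent with an infinite solution and so cannot establish finiteness on its own, which forces in a genuinely new ingredient, namely an explicitly constructed dominating supermartingale with a finite value at $b$. Once the correct level $t^\ast$ is guessed, the only delicate points are verifying the equality $\utrans(\Delta\supermartin) = 0$ and arranging the construction so that the value freezes \emph{exactly} at the transition time after $a$ is hit, so that the $\liminf$ domination is tight on the hitting paths and automatically correct (both sides $+\infty$) on the single non-hitting path.
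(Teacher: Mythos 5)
Your proposal is correct and follows essentially the same route as the paper: both arguments exhibit an explicit dominating supermartingale in $\bbsupermartins$ that grows by $1$ along runs of $b$'s, drops by $\nicefrac{1}{\ltheta_a}=\nicefrac{2}{(1-\epsilon)}$ upon first hitting $a$ and then freezes at exactly the transition time, with the supermartingale property hinging on the same identity $\overline{E}_1\bigl(1-\indsing{a}\nicefrac{1}{\ltheta_a}\bigr)=0$, yielding $\utimefromto{b}{a}\leq\nicefrac{1}{\ltheta_a}<+\infty$. Your opening remark that the recursion~\eqref{eq:transition:times:upper} alone cannot rule out the solution $+\infty$ is exactly the reason the paper needs this lemma before solving the system.
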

\begin{proof}
Since $\epsilon\in(0,1)$, it follows that $\ltheta_a>0$. Consider the real process $\submartin$ defined by $\submartin(\init)\coloneqq\nicefrac{1}{\ltheta_a}$, $\Delta\submartin(\init)\coloneqq 0$ and, for all $n\in\nats$ and $\xvalto{n}\in\statesto{n}$: 
\begin{equation}\label{eq:defsubmartinexample}
\Delta\submartin(\xvalto{n})\coloneqq
\begin{cases}
1-\indsing{a}\nicefrac{1}{\ltheta_a}
&\text{ if $x_k=b$ for all $k\in\{1,\dots,n\}$}\\
0
&\text{ otherwise.}
\end{cases}
\end{equation}
Since coherence implies that $\overline{E}_1(0)=0$ and
\begin{equation*}
\overline{E}_1(1-\indsing{a}\nicefrac{1}{\ltheta_a})
=1+\overline{E}_1(-\indsing{a}\nicefrac{1}{\ltheta_a})
=1-\underline{E}_1(\indsing{a}\nicefrac{1}{\ltheta_a})
=1-\nicefrac{1}{\ltheta_a}\underline{E}_1(\indsing{a})
=0,
\end{equation*}
it follows from from Equations~\eqref{eq:localmodelsexample} and~\eqref{eq:defsubmartinexample} that $\submartin$ is a supermartingale.

Consider now any $n\in\nats$ and $\xvalto{n}\in\statesto{n}$. If $x_k=b$ for all $k\in\{1,\dots,n\}$, we have that
\begin{equation*}
\submartin(\xvalto{n})=\submartin(\init)+\Delta\submartin(\init)+\sum_{k=1}^{n-1}\Delta\submartin(\xvalto{k})
=\nicefrac{1}{\ltheta_a}+0+(n-1)=\nicefrac{1}{\ltheta_a}+n-1.
\end{equation*}
Otherwise, we find that
\begin{equation*}
\submartin(\xvalto{n})
=\submartin(\init)+\Delta\submartin(\init)+\sum_{k=1}^{k^*-1}\Delta\submartin(\xvalto{k})
=\nicefrac{1}{\ltheta_a}+0+(k^*-1-\nicefrac{1}{\ltheta_a})=k^*-1,
\end{equation*}
where $k^*$ is the smallest index $k$ such that $x_{k}=a$. Hence, we find that the supermartingale $\submartin$ is bounded below by zero and therefore belongs to $\bbsupermartins$, and that
\begin{equation}\label{eq:liminfbinaryexample}
\liminf\submartin(\pth)=
\begin{cases}
+\infty
&\text{ if $\pth_k=b$ for all $k\in\nats$}\\
\inf\{k\in\nats\colon \pth_k=a\}-1
&\text{ otherwise}
\end{cases}
\end{equation}
for all $\pth\in\pths$.

For all $\pth\in\exact{b}$, Equations~\eqref{eq:transition:times:path} and~\eqref{eq:liminfbinaryexample} now imply that $\liminf\submartin(\pth)=\timefromto[1]{b}{a}(\pth)$. Hence, since $\submartin\in\bbsupermartins$, it follows from Equation~\eqref{eq:global:upper:extended} that
\begin{equation*}
\utimefromto{b}{a}
\coloneqq\sitinuglobal{\timefromto[1]{b}{a}}{b}\leq\submartin(b)=\submartin(\init)+\Delta\submartin(\init)(b)=\nicefrac{1}{\ltheta_a}<+\infty.
\end{equation*} 
We also already know that $1\leq\ltimefromto{b}{a}\leq\utimefromto{b}{a}$---see the text after Equation~\eqref{eq:transition:times:shift_inv}.
\end{proof}

Since the lemma tells us that $\ltimefromto{b}{a}$ is real-valued, we can now solve Equation~\eqref{eq:simplifiedsystembinary} for $x=b$ to find that $\ltimefromto{b}{a}=\nicefrac{1}{\utheta_a}$. 
Equation~\eqref{eq:simplifiedsystembinary} also implies that $\ltimefromto{b}{a}=\ltimefromto{a}{a}$. Hence, also using the symmetry, we obtain the following expressions for the lower transition and return times:

\begin{equation*}
\ltimefromto{a}{a}
=\ltimefromto{b}{a}
=\frac{1}{\utheta_a}
=\frac{2}{1+\epsilon}
\text{ and }
\ltimefromto{b}{b}
=\ltimefromto{a}{b}
=\frac{1}{\utheta_b}
=\frac{2}{1+\epsilon}.
\end{equation*}\\[-4mm]

An analogous argument---simplifying Equation~\eqref{eq:transition:times:upper} for $y=a$, solving the resulting system to find $\utimefromto{b}{a}$ and $\utimefromto{a}{a}$, and then invoking symmetry---leads to the following similar expressions for the upper transition and return times:
\begin{equation*}
\utimefromto{a}{a}=\utimefromto{b}{a}=\frac{1}{\ltheta_a}=\frac{2}{1-\epsilon}
\text{ and }
\utimefromto{b}{b}=\utimefromto{a}{b}=\frac{1}{\ltheta_b}=\frac{2}{1-\epsilon}.
\end{equation*}

\section{An interesting equality in imprecise Markov chains}\label{sec:interesting:equality}
We now prove an interesting equality for imprecise Markov chains, which will be instrumental in proving our point-wise ergodic theorem in the next section.

Consider, for any $f\in\gambles$, the corresponding \emph{gain} process $\gain{f}$, defined by:
\begin{equation}\label{eq:gain}
\gain{f}(\stateto{n})
\coloneqq
\sqgroup{f(\state{1})-\ljoint[1](f)}
+\sum_{k=2}^{n}\sqgroup{f(\state{k})-\ltrans f(\state{k-1})}
\text{ for any $n\in\nats$},
\end{equation}
the corresponding \emph{average gain} process $\avgain{f}$, defined by:
\begin{equation}\label{eq:average:gain}
\avgain{f}(\stateto{n})
\coloneqq
\frac{1}{n}
\sqgroup[\bigg]{\sqgroup{f(\state{1})-\ljoint[1](f)}
+\sum_{k=2}^{n}\sqgroup{f(\state{k})-\ltrans f(\state{k-1})}}
\text{ for any $n\in\nats$},
\end{equation}
and the \emph{ergodic average} process $\ergodic{f}$, defined by:
\begin{equation}\label{eq:ergodic:average}
\ergodic{f}(\stateto{n})
\coloneqq\frac{1}{n}\sum_{k=1}^{n}\sqgroup{f(\state{k})-\ljoint[k](f)}
\text{ for any $n\in\nats$}.
\end{equation}
We define these processes to be $0$ in the initial situation $\init$.
Now observe that, for any $n\in\nats$ and any $f\in\gambles$:
\begin{equation}\label{eq:start}
\sum_{\ell=0}^{n-1}\avgain{\ltrans^{\ell}f}(\stateto{n})
=\frac{1}{n}
\sum_{\ell=0}^{n-1}
\sqgroup[\big]{\ltrans^{\ell}f(\state{1})-\ljoint[1](\ltrans^{\ell}f)}
+\frac{1}{n}
\sum_{\ell=0}^{n-1}\sum_{k=2}^{n}
\sqgroup[\big]{\ltrans^{\ell}f(\state{k})-\ltrans^{\ell+1}f(\state{k-1})},
\end{equation}
and moreover
\begin{multline*}
\sum_{\ell=0}^{n-1}\sum_{k=2}^{n}
\sqgroup[\big]{\ltrans^{\ell}f(\state{k})-\ltrans^{\ell+1}f(\state{k-1})}\\
\begin{aligned}
&=\sum_{\ell=0}^{n-1}\sum_{k=2}^{n}\ltrans^{\ell}f(\state{k})
-\sum_{\ell=0}^{n-1}\sum_{k=2}^{n}\ltrans^{\ell+1}f(\state{k-1})
=\sum_{\ell=0}^{n-1}\sum_{k=2}^{n}\ltrans^{\ell}f(\state{k})
-\sum_{\ell=1}^{n}\sum_{k=1}^{n-1}\ltrans^{\ell}f(\state{k})\\
&=\sum_{k=2}^{n}f(\state{k})
+\sum_{\ell=1}^{n-1}\group[\bigg]{\ltrans^{\ell}f(\state{n})+\sum_{k=2}^{n-1}\ltrans^{\ell}f(\state{k})}\\
&\qquad\qquad-\sum_{k=1}^{n-1}\ltrans^{n}f(\state{k})
-\sum_{\ell=1}^{n-1}\group[\bigg]{\ltrans^{\ell}f(\state{1})+\sum_{k=2}^{n-1}\ltrans^{\ell}f(\state{k})}\\
&=\sum_{k=2}^{n}f(\state{k})
+\sum_{\ell=1}^{n-1}\ltrans^{\ell}f(\state{n})
-\sum_{k=1}^{n-1}\ltrans^{n}f(\state{k})
-\sum_{\ell=1}^{n-1}\ltrans^{\ell}f(\state{1})\\
&=\sum_{k=1}^{n}f(\state{k})
+\sum_{\ell=1}^{n}\ltrans^{\ell}f(\state{n})
-\sum_{k=1}^{n}\ltrans^{n}f(\state{k})
-\sum_{\ell=0}^{n-1}\ltrans^{\ell}f(\state{1}),
\end{aligned}
\end{multline*}
and if we substitute this back into Equation~\eqref{eq:start}, we find that, after getting rid of the cancelling terms, recalling that $\ljoint[1](\ltrans^{\ell}f)=\ljoint[\ell+1](f)$, and reorganising a bit:
\begin{align*}
\sum_{\ell=0}^{n-1}\avgain{\ltrans^{\ell}f}(\stateto{n})
&=\frac{1}{n}
\sqgroup[\bigg]{
-\sum_{\ell=0}^{n-1}\ljoint[1](\ltrans^{\ell}f)
+\sum_{k=1}^{n}f(\state{k})
+\sum_{\ell=1}^{n}\ltrans^{\ell}f(\state{n})
-\sum_{k=1}^{n}\ltrans^{n}f(\state{k})
}\\
&=\frac{1}{n}\sum_{k=1}^{n}\sqgroup{f(\state{k})-\ljoint[k](f)}
+\frac{1}{n}\sum_{\ell=1}^{n}\ltrans^{\ell}f(\state{n})
-\frac{1}{n}\sum_{k=1}^{n}\ltrans^{n}f(\state{k})
\end{align*}
or in other words:
\begin{equation}\label{eq:finish}
\ergodic{f}(\stateto{n})
=\sum_{\ell=0}^{n-1}\avgain{\ltrans^{\ell}f}(\stateto{n})
+\frac{1}{n}\sum_{k=1}^{n}\ltrans^{n}f(\state{k})
-\frac{1}{n}\sum_{\ell=1}^{n}\ltrans^{\ell}f(\state{n}).
\end{equation}
This is an important relationship between the ergodic average and the average gain. We now intend to show that under certain conditions the remaining terms on the right-hand side essentially cancel out for large enough $n$.

\section{Consequences of the {\pflike} character}\label{sec:perron:frobenius}
Let us associate with a lower transition operator $\ltrans$ the following (weak) \emph{coefficient of ergodicity} \citep{skulj2013,hermans2012}:
\begin{equation*}
\coe(\ltrans)
\coloneqq\max_{x,y\in\stateset}\max_{h\in\normedgambles}\abs{\ltrans h(x)-\ltrans h(y)}
=\max_{h\in\normedgambles}\varnorm{\ltrans h},
\end{equation*}
where $\normedgambles\coloneqq\cset{h\in\gambles}{0\leq h\leq 1}$, and where for any $h\in\gambles$, its \emph{variation (semi)norm} is given by $\varnorm{h}\coloneqq\max h-\min h$.
If we define the following distance between two lower expectation operators $\lex$ and $\alex$ \citep{skulj2013}:
\begin{equation*}
d(\lex,\alex)=\max_{h\in\normedgambles}\abs{\lex(h)-\alex(h)},
\end{equation*}
then it is not difficult to see [using~\ref{it:lex:homo}, \ref{it:lex:more:bounds} and~\ref{it:lex:constant:additivity}] that $0\leq d(\lex,\alex)\leq1$, and that for any $f\in\gambles$:
\begin{equation}\label{eq:value:inequality}
\abs{\lex(f)-\alex(f)}\leq d(\lex,\alex)\varnorm{f}.
\end{equation}
\noindent
\citet{skulj2013} prove the following results, which will turn out to be crucial to our argument.

\begin{theorem}[\citep{skulj2013}]\label{thm:coe:properties}
Consider lower transition operators\/ $\latrans$ and\/ $\ltrans$, and two lower expectations $\ljoint[a]$ and $\ljoint[b]$ on $\gambles$.
Then the following statements hold:
\begin{enumerate}[label=\upshape(\roman*),ref=\upshape(\roman*),leftmargin=*]
\item\label{thm:coe:bounds} $0\leq\coe(\ltrans)\leq1$.
\item\label{thm:coe:coepowers} $\coe(\latrans\ltrans)\leq\coe(\latrans)\coe(\ltrans)$ and therefore $\coe(\ltrans^{n})\leq\coe(\ltrans)^{n}$ for all $n\in\nats$.
\item\label{thm:coe:dist} $d(\ljoint[a]\ltrans,\ljoint[b]\ltrans)\leq d(\ljoint[a],\ljoint[b])\coe(\ltrans)$.
\item\label{thm:coe:ergodicity} The lower transition operator\/ $\ltrans$ is {\pflike} if and only if there is some $r\in\nats$ such that $\coe(\ltrans^r)<1$.
\end{enumerate}
\end{theorem}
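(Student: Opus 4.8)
The plan is to dispatch the four claims in the order \ref{thm:coe:bounds}, \ref{thm:coe:dist}, \ref{thm:coe:coepowers}, \ref{thm:coe:ergodicity}: the boundedness of $\coe$ and its interaction with the distance $d$ are essentially immediate, submultiplicativity is a short reduction to Equation~\eqref{eq:value:inequality}, and the substance lies in the characterisation \ref{thm:coe:ergodicity}. For \ref{thm:coe:bounds}, non-negativity is clear since $\coe(\ltrans)$ is a maximum of absolute values; for the upper bound I would note that if $h\in\normedgambles$ then $0\leq h\leq1$, so coherence [\ref{it:lex:more:bounds}] applied to each local model gives $0\leq\ltrans h(x)\leq1$ for every $x\in\stateset$, whence $\varnorm{\ltrans h}\leq1$ and, maximising over $h\in\normedgambles$, $\coe(\ltrans)\leq1$. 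For \ref{thm:coe:dist}, I would fix $h\in\normedgambles$ and apply~\eqref{eq:value:inequality} to the lower expectations $\ljoint[a]$ and $\ljoint[b]$ with $f\coloneqq\ltrans h\in\gambles$, obtaining $\abs{\ljoint[a](\ltrans h)-\ljoint[b](\ltrans h)}\leq d(\ljoint[a],\ljoint[b])\varnorm{\ltrans h}\leq d(\ljoint[a],\ljoint[b])\coe(\ltrans)$, and then maximise the left-hand side over $h\in\normedgambles$.

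For \ref{thm:coe:coepowers}, the idea is to reduce submultiplicativity to~\eqref{eq:value:inequality} by treating the rows of $\latrans$ as lower expectations. For any $h\in\normedgambles$ and $x,y\in\stateset$, write $g\coloneqq\ltrans h\in\gambles$ and apply~\eqref{eq:value:inequality} to the lower expectations $\latrans_x\colon f\mapsto\latrans f(x)$ and $\latrans_y\colon f\mapsto\latrans f(y)$, giving $\abs{\latrans(\ltrans h)(x)-\latrans(\ltrans h)(y)}=\abs{\latrans_x(g)-\latrans_y(g)}\leq d(\latrans_x,\latrans_y)\varnorm{g}$. Now $d(\latrans_x,\latrans_y)\leq\coe(\latrans)$ directly from the definitions, while $\varnorm{g}=\varnorm{\ltrans h}\leq\coe(\ltrans)$; maximising over $x,y$ and over $h\in\normedgambles$ yields $\coe(\latrans\ltrans)\leq\coe(\latrans)\coe(\ltrans)$, and the power bound follows by induction.

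The work is in \ref{thm:coe:ergodicity}, and I expect it to be the main obstacle. Two monotonicity facts, both consequences of coherence [\ref{it:lex:monotone}, together with the fact that each row of $\ltrans$ fixes constants, which follows from \ref{it:lex:constant:additivity}], will carry most of the argument: first, $\min g\leq\ltrans g\leq\max g$ for any $g\in\gambles$, so $\varnorm{\ltrans g}\leq\varnorm{g}$; and second, applying this along the orbit shows $\min\ltrans^{n}f$ is non-decreasing and $\max\ltrans^{n}f$ non-increasing in $n$. For the ``if'' direction, assuming $\coe(\ltrans^r)<1$, I would write $n=qr+s$ with $0\leq s<r$ and use \ref{thm:coe:coepowers} to bound $\coe(\ltrans^{n})\leq\coe(\ltrans^r)^{q}\to0$. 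For arbitrary $f\in\gambles$, positive homogeneity [\ref{it:lex:homo}] and constant additivity [\ref{it:lex:constant:additivity}] reduce to the normalised gamble $(f-\min f)/\varnorm{f}\in\normedgambles$ and give $\varnorm{\ltrans^{n}f}\leq\varnorm{f}\coe(\ltrans^{n})\to0$; combined with the two bounded monotone sequences $\min\ltrans^{n}f\uparrow$, $\max\ltrans^{n}f\downarrow$ whose difference tends to $0$, this forces $\ltrans^{n}f$ to converge point-wise to a common constant, so $\ltrans$ is \pflike.

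The converse is the genuine difficulty: \pflike ness supplies only \emph{point-wise} convergence $\ltrans^{n}h\to\ljoint[\pf](h)$, hence $\varnorm{\ltrans^{n}h}\to0$ for each fixed $h$, whereas I need the \emph{uniform} statement $\coe(\ltrans^{n})=\max_{h\in\normedgambles}\varnorm{\ltrans^{n}h}\to0$ in order to push the coefficient below $1$. The plan is to supply uniformity by compactness: $\normedgambles$ is a compact subset of the finite-dimensional space $\gambles$; each map $\phi_n\colon h\mapsto\varnorm{\ltrans^{n}h}$ is continuous, because lower expectations are non-expansive for the supremum norm (as $\abs{\lex(f)-\lex(g)}\leq\sup\abs{f-g}$ follows from [\ref{it:lex:monotone}] and [\ref{it:lex:constant:additivity}]) and $\varnorm{\cdot}$ is continuous; and $\phi_{n+1}\leq\phi_n$ point-wise by the first monotonicity fact above. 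Dini's theorem then upgrades the point-wise convergence $\phi_n\to0$ to uniform convergence, so $\coe(\ltrans^{n})\to0$ and in particular $\coe(\ltrans^{r})<1$ for some $r\in\nats$. The step deserving the most care is thus the passage from point-wise to uniform control, where monotonicity in $n$ (not merely in the argument) is exactly what makes Dini applicable.
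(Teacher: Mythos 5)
Your proof is correct, but note that the paper itself offers no proof of this theorem: it is stated with the citation \citep{skulj2013} and imported wholesale from that reference, so there is no internal argument to compare yours against. Judged on its own terms, your argument is sound and self-contained. Parts \ref{thm:coe:bounds}--\ref{thm:coe:dist} are handled exactly as one would hope: \ref{thm:coe:bounds} from \ref{it:lex:more:bounds}, and both \ref{thm:coe:coepowers} and \ref{thm:coe:dist} as instances of Equation~\eqref{eq:value:inequality} (for \ref{thm:coe:coepowers}, applied to the rows $\latrans_x$, $\latrans_y$, with $d(\latrans_x,\latrans_y)\leq\coe(\latrans)$ and $\varnorm{\ltrans h}\leq\coe(\ltrans)$). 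In \ref{thm:coe:ergodicity}, the ``if'' direction correctly combines submultiplicativity with the monotone bounded sequences $\min\ltrans^{n}f\uparrow$ and $\max\ltrans^{n}f\downarrow$ whose gap $\varnorm{\ltrans^{n}f}\leq\varnorm{f}\coe(\ltrans^{n})\to0$; you implicitly use $\coe(\ltrans^{s})\leq1$ from \ref{thm:coe:bounds} to discard the remainder $s$ in $n=qr+s$, which is fine. You correctly identify the converse as the only genuinely delicate step, and your resolution works: $\normedgambles=[0,1]^{\stateset}$ is compact, each $\phi_n(h)=\varnorm{\ltrans^{n}h}$ is continuous because lower expectations are $1$-Lipschitz for the supremum norm (a consequence of \ref{it:lex:monotone} and \ref{it:lex:constant:additivity}), and $\phi_{n+1}\leq\phi_n$ follows from $\min g\leq\ltrans g\leq\max g$, so Dini upgrades the point-wise decay $\phi_n\to0$ guaranteed by the {\pflike} hypothesis to uniform decay, giving $\coe(\ltrans^{n})\to0$. (An equivalent compactness argument—extracting a convergent subsequence from maximisers $h_n$ of $\phi_n$ and using non-expansiveness—would do the same job; the monotonicity in $n$ is what makes the cleaner Dini route available.) Whether this matches the argument in \citet{skulj2013} I leave to the reference, but as a replacement for the citation your proof is complete.
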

\noindent
Indeed, they allow us to derive useful bounds for the various terms on the right-hand side of Equation~\eqref{eq:finish}. 
For any non-negative real number $a$ we denote by $\floor{a}=\max\cset{n\in\natswithzero}{n\leq a}$ the largest natural number that it still dominates---its integer part.

\begin{lemma}\label{lem:basic:bounding}
Let\/ $\ltrans$ be a {\pflike} lower transition operator, with invariant lower expectation $\ljoint[\infty]$, and let $r$ be the smallest natural number such that $\rho\coloneqq\coe(\ltrans^r)<1$.
Let\/ $\ljoint[a]$ and $\ljoint[b]$ be any two lower expectations on $\gambles$.
Then for all $f\in\gambles$, $\ell_1,\ell_2\in\natswithzero$: 
\begin{equation}\label{eq:basic:bounding}
\abs[\big]{\ljoint[a](\ltrans^{\ell_1}f)-\ljoint[b](\ltrans^{\ell_2}f)}
\leq\varnorm{f}\rho^{\floor{\frac{\min\{\ell_1,\ell_2\}}{r}}}.
\end{equation}
As a consequence, for all $f\in\gambles$, $\ell,\ell_1,\ell_2\in\natswithzero$ and $k,k_1,k_2\in\nats$: 
\begin{align}
\abs[\big]{\ltrans^{\ell}f(\state{k})-\ljoint[\infty](f)}
&\leq\varnorm{f}\rho^{\floor{\frac{\ell}{r}}},
\label{eq:basic:bounding:einfty}\\
\abs[\big]{\ljoint[a](\ltrans^{\ell}f)-\ljoint[\infty](f)}
&\leq\varnorm{f}\rho^{\floor{\frac{\ell}{r}}},
\label{eq:basic:bounding:eleft}\\
\abs[\big]{\ltrans^{\ell}f(\state{k})-\ljoint[b](\ltrans^{\ell}f)}
&\leq\varnorm{f}\rho^{\floor{\frac{\ell}{r}}},
\label{eq:basic:bounding:eright}\\
\abs[\big]{\ltrans^{\ell_1}f(\state{k_1})-\ltrans^{\ell_2}f(\state{k_2})}
&\leq\varnorm{f}\rho^{\floor{\frac{\min\{\ell_1,\ell_2\}}{r}}}.
\label{eq:basic:bounding:diff}
\end{align}
\end{lemma}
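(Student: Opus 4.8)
The plan is to establish the principal inequality~\eqref{eq:basic:bounding} first and then read off \eqref{eq:basic:bounding:einfty}--\eqref{eq:basic:bounding:diff} as specialisations. Two facts will drive everything. First, the value inequality~\eqref{eq:value:inequality} converts the operator distance $d$ into a bound on a difference of values. Second, composing a lower expectation with a lower transition operator again yields a lower expectation, and $\ltrans^r$ is itself a lower transition operator with $\coe(\ltrans^r)=\rho$; hence $\ljoint[a]\ltrans^{\ell}$, $\ljoint[b]\ltrans^{\ell}$ and $\ljoint[b]\ltrans^{\ell_2-\ell_1}$ are all lower expectations, and Theorem~\ref{thm:coe:properties}\ref{thm:coe:dist} may be applied with $\ltrans^r$ playing the role of the transition operator.

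For~\eqref{eq:basic:bounding}, by symmetry of $\abs{\cdot}$ I would assume $\ell_1\leq\ell_2$, so that $\min\{\ell_1,\ell_2\}=\ell_1$. Rewriting $\ljoint[a](\ltrans^{\ell_1}f)=(\ljoint[a]\ltrans^{\ell_1})(f)$ and $\ljoint[b](\ltrans^{\ell_2}f)=(\ljoint[b]\ltrans^{\ell_2})(f)$ and applying~\eqref{eq:value:inequality} gives
\[ \abs{\ljoint[a](\ltrans^{\ell_1}f)-\ljoint[b](\ltrans^{\ell_2}f)} \leq d(\ljoint[a]\ltrans^{\ell_1},\ljoint[b]\ltrans^{\ell_2})\varnorm{f}. \]
The crux is to bound this distance. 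Setting $\ljoint[b]'\coloneqq\ljoint[b]\ltrans^{\ell_2-\ell_1}$, I would write $\ljoint[b]\ltrans^{\ell_2}=\ljoint[b]'\ltrans^{\ell_1}$, so that both functionals now carry the common trailing block $\ltrans^{\ell_1}$. With $\ell_1=qr+s$, $q\coloneqq\floor{\frac{\ell_1}{r}}$ and $0\leq s<r$, I would strip off the factor $\ltrans^r$ exactly $q$ times through repeated use of Theorem~\ref{thm:coe:properties}\ref{thm:coe:dist}, each step producing one factor $\rho$:
\[ d(\ljoint[a]\ltrans^{\ell_1},\ljoint[b]'\ltrans^{\ell_1}) \leq d(\ljoint[a]\ltrans^{s},\ljoint[b]'\ltrans^{s})\,\rho^{q} \leq \rho^{q} = \rho^{\floor{\frac{\min\{\ell_1,\ell_2\}}{r}}}, \]
where the last inequality discards the residual block $\ltrans^{s}$ using $d\leq1$. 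Combining the two displays proves~\eqref{eq:basic:bounding}.

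The four consequences then follow by specialising $\ljoint[a]$ and $\ljoint[b]$, and by invoking the $\ltrans$-invariance $\ljoint[\infty]\circ\ltrans=\ljoint[\infty]$ of Proposition~\ref{prop:perron:frobenius}, which yields $\ljoint[\infty](\ltrans^{\ell}f)=\ljoint[\infty](f)$. For any state value $x\in\stateset$ the evaluation functional $h\mapsto h(x)$ is a precise, hence lower, expectation, and $\ltrans^{\ell}f(x)$ is exactly this functional applied to $\ltrans^{\ell}f$; the statements written with $\state{k}$, $\state{k_1}$, $\state{k_2}$ are understood to hold for every such value. Taking the evaluation functional for $\ljoint[a]$ and $\ljoint[b]\coloneqq\ljoint[\infty]$ with $\ell_1=\ell_2=\ell$, and rewriting $\ljoint[\infty](\ltrans^{\ell}f)=\ljoint[\infty](f)$, gives~\eqref{eq:basic:bounding:einfty}; a general $\ljoint[a]$ with $\ljoint[b]\coloneqq\ljoint[\infty]$ and $\ell_1=\ell_2=\ell$ gives~\eqref{eq:basic:bounding:eleft}; the evaluation functional for $\ljoint[a]$, a general $\ljoint[b]$ and $\ell_1=\ell_2=\ell$ gives~\eqref{eq:basic:bounding:eright}; and evaluation functionals at two (possibly distinct) state values together with the general exponents $\ell_1,\ell_2$ give~\eqref{eq:basic:bounding:diff}.

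I expect the only delicate point to be the bookkeeping in the stripping step: before invoking Theorem~\ref{thm:coe:properties}\ref{thm:coe:dist} one must absorb the surplus $\ltrans^{\ell_2-\ell_1}$ into $\ljoint[b]$ so that both functionals share the identical trailing block $\ltrans^{\ell_1}$, and one must group that block as $(\ltrans^{r})^{q}\ltrans^{s}$ so that each removal of a $\ltrans^{r}$ costs precisely one factor $\rho$. The leftover $\ltrans^{s}$ with $0\leq s<r$ causes no loss only because $d$ is bounded above by $1$, which is also why the exponent is $\floor{\frac{\min\{\ell_1,\ell_2\}}{r}}$ rather than $\frac{\min\{\ell_1,\ell_2\}}{r}$.
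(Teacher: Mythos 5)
Your proposal is correct and follows essentially the same route as the paper: reduce to the operator distance via the value inequality~\eqref{eq:value:inequality}, absorb the surplus $\ltrans^{\ell_2-\ell_1}$ into one of the lower expectations, extract the factor $\rho^{\floor{\ell_1/r}}$ using Theorem~\ref{thm:coe:properties}, discard the remainder using $d\leq1$, and then specialise with evaluation functionals $\joint[x]$ and the $\ltrans$-invariance of $\ljoint[\infty]$. The only cosmetic difference is that you iterate part~\ref{thm:coe:dist} $q$ times with the block $\ltrans^{r}$, whereas the paper applies it once with the whole block $\ltrans^{\ell_1}$ and then bounds $\coe(\ltrans^{\ell_1})\leq\rho^{\floor{\ell_1/r}}$ via parts~\ref{thm:coe:bounds} and~\ref{thm:coe:coepowers}; the two bookkeepings are equivalent.
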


\begin{proof}
We may assume without loss of generality that $\ell_1\leq\ell_2$.
Using Equation~\eqref{eq:value:inequality}, Theorem~\ref{thm:coe:properties}\ref{thm:coe:dist} and the fact that we can consider $\ltrans^{\ell_1}$ as a lower transition operator in its own right:
\begin{equation*}
\abs[\big]{\ljoint[a](\ltrans^{\ell_1}f)-\ljoint[b](\ltrans^{\ell_2}f)} \leq d(\ljoint[a]\ltrans^{\ell_1},\ljoint[b]\ltrans^{\ell_2})\varnorm{f}
\leq d(\ljoint[a],\ljoint[b]\ltrans^{\ell_2-\ell_1})\coe(\ltrans^{\ell_1})\varnorm{f}.
\end{equation*}
Our proof of the first inequality~\eqref{eq:basic:bounding} is complete if we realise that $0\leq d(\ljoint[a],\ljoint[b]\ltrans^{\ell_2-\ell_1})\leq1$, and that $\coe(\ltrans^{\ell_1})\leq\coe(\ltrans^{r\floor{\frac{\ell_1}{r}}})\leq\coe(\ltrans^r)^{\floor{\frac{\ell_1}{r}}}$ by Theorem~\ref{thm:coe:properties}\ref{thm:coe:bounds}\&\ref{thm:coe:coepowers}.

Denote, for any $x\in\stateset$, by $\joint[x]$ the expectation operator that assigns all probability mass to $x$, meaning that $\joint[x](f)\coloneqq f(x)$ for all $f\in\gambles$.
To prove the second inequality~\eqref{eq:basic:bounding:einfty}, consider any $x\in\stateset$ and let $\ljoint[a]=\joint[x]$, $\ljoint[b]=\ljoint[\infty]$ and $\ell_1=\ell_2=\ell$, then we infer from~\eqref{eq:basic:bounding} that indeed:
\begin{equation*}
\abs[\big]{\ltrans^{\ell}f(x)-\ljoint[\infty](f)}
=\abs[\big]{\joint[x](\ltrans^{\ell}f)-\ljoint[\infty](\ltrans^{\ell}f)}
\leq\varnorm{f}\rho^{\floor{\frac{\ell}{r}}}.
\end{equation*}
To prove the third inequality~\eqref{eq:basic:bounding:eleft}, let $\ljoint[b]=\ljoint[\infty]$ and $\ell_1=\ell_2=\ell$, then we infer from~\eqref{eq:basic:bounding} that indeed:
\begin{equation*}
\abs[\big]{\ljoint[a](\ltrans^{\ell}f)-\ljoint[\infty](f)}
=\abs[\big]{\ljoint[a](\ltrans^{\ell}f)-\ljoint[\infty](\ltrans^{\ell}f)}
\leq\varnorm{f}\rho^{\floor{\frac{\ell}{r}}},
\end{equation*}
where we used that $\ljoint[\infty](f)=\ljoint[\infty](\ltrans^{\ell}f)$ for all $\ell\in\natswithzero$; see Proposition~\ref{prop:perron:frobenius}.

To prove the fourth inequality~\eqref{eq:basic:bounding:eright}, consider any $x\in\stateset$ and let $\ljoint[a]=\joint[x]$ and $\ell_1=\ell_2=\ell$, then we infer from~\eqref{eq:basic:bounding} that indeed:
\begin{equation*}
\abs[\big]{\ltrans^{\ell}f(x)-\ljoint[b](\ltrans^{\ell}f)}
=\abs[\big]{\joint[x](\ltrans^{\ell}f)-\ljoint[b](\ltrans^{\ell}f)}
\leq\varnorm{f}\rho^{\floor{\frac{\ell}{r}}}.
\end{equation*}
To prove the fifth inequality~\eqref{eq:basic:bounding:diff}, consider any $x,y\in\stateset$ and let $\ljoint[a]=\joint[x]$ and $\ljoint[b]=\joint[y]$.
Then we infer from~\eqref{eq:basic:bounding} that indeed:
\begin{equation*}
\abs[\big]{\ltrans^{\ell_1}f(x)-\ltrans^{\ell_2}f(y)}
=\abs[\big]{\joint[x](\ltrans^{\ell_1}f)-\joint[y](\ltrans^{\ell_2}f)}
\leq\varnorm{f}\rho^{\floor{\frac{\min\{\ell_1,\ell_2\}}{r}}}.\qedhere
\end{equation*}
\end{proof}

\begin{lemma}\label{lem:advanced:bounding}
Consider an imprecise Markov chain with initial---or marginal---model $\ljoint[1]$ and lower transition operator\/ $\ltrans$.
Assume that\/ $\ltrans$ is {\pflike}, with invariant lower expectation $\ljoint[\infty]$, and let $r$ be the smallest natural number such that $\rho\coloneqq\coe(\ltrans^r)<1$.
Then the following statements hold for all $f\in\gambles$:
\begin{enumerate}[label=\upshape(\roman*),leftmargin=*]
\item\label{it:advanced:bounding:avgain} $\abs{\avgain{\ltrans^{\ell}f}(\stateto{n})}\leq\varnorm{f}\rho^{\floor{\frac{\ell}{r}}}$ for all $\ell\in\natswithzero$ and $n\in\nats$.
\item\label{it:advanced:bounding:fixed:trans} $\lim_{n\to\infty}\frac{1}{n}\sum_{k=1}^{n}\ltrans^{n}f(\state{k})=\ljoint[\infty](f)$.
\item\label{it:advanced:bounding:avgain:fixed:state} $\lim_{n\to\infty}\frac{1}{n}\sum_{\ell=1}^{n}\ltrans^{\ell}f(\state{n})=\ljoint[\infty](f)$.
\item\label{it:advanced:bounding:expectations} $\lim_{n\to\infty}\frac{1}{n}\sum_{k=1}^{n}\ljoint[k](f)=\ljoint[\infty](f)$.
\end{enumerate} 
\end{lemma}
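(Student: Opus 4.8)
The plan is to obtain every one of the four statements directly from the bounds collected in Lemma~\ref{lem:basic:bounding}, combined with the single elementary fact that the sequence $\rho^{\floor{\frac{\ell}{r}}}$ is summable over $\ell\in\natswithzero$. Indeed, grouping the indices into consecutive blocks of length $r$, on each of which $\floor{\frac{\ell}{r}}$ is constant, gives $\sum_{\ell\in\natswithzero}\rho^{\floor{\frac{\ell}{r}}}\leq r\sum_{j\in\natswithzero}\rho^{j}=\frac{r}{1-\rho}<\infty$, since $0\leq\rho<1$. I would record this summability bound at the outset, as it is the engine behind the three limit statements.

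For statement~\ref{it:advanced:bounding:avgain}, I would expand the average gain using Equation~\eqref{eq:average:gain} applied to $\ltrans^{\ell}f$:
\begin{equation*}
\avgain{\ltrans^{\ell}f}(\stateto{n})
=\frac{1}{n}\sqgroup[\bigg]{\sqgroup{\ltrans^{\ell}f(\state{1})-\ljoint[1](\ltrans^{\ell}f)}
+\sum_{k=2}^{n}\sqgroup{\ltrans^{\ell}f(\state{k})-\ltrans^{\ell+1}f(\state{k-1})}}.
\end{equation*}
By the triangle inequality I would bound the first bracketed term by~\eqref{eq:basic:bounding:eright} (taking $\ljoint[b]=\ljoint[1]$ and the state $\state{1}$) and each of the remaining $n-1$ terms by~\eqref{eq:basic:bounding:diff} (taking $\ell_1=\ell$, $\ell_2=\ell+1$, so that $\min\{\ell_1,\ell_2\}=\ell$). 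Each of the $n$ summands is then at most $\varnorm{f}\rho^{\floor{\frac{\ell}{r}}}$, and dividing by $n$ cancels the factor $n$, leaving exactly the claimed bound $\abs{\avgain{\ltrans^{\ell}f}(\stateto{n})}\leq\varnorm{f}\rho^{\floor{\frac{\ell}{r}}}$.

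The three limits are then variations on the same theme of comparing with $\ljoint[\infty](f)$ and averaging. For~\ref{it:advanced:bounding:fixed:trans} every summand $\ltrans^{n}f(\state{k})$ carries the same exponent $n$, so~\eqref{eq:basic:bounding:einfty} yields the uniform bound $\abs{\frac{1}{n}\sum_{k=1}^{n}\ltrans^{n}f(\state{k})-\ljoint[\infty](f)}\leq\varnorm{f}\rho^{\floor{\frac{n}{r}}}$, which tends to $0$ as $n\to\infty$. For~\ref{it:advanced:bounding:avgain:fixed:state} the exponent now ranges over $\ell$, and~\eqref{eq:basic:bounding:einfty} gives, termwise, $\abs{\frac{1}{n}\sum_{\ell=1}^{n}\ltrans^{\ell}f(\state{n})-\ljoint[\infty](f)}\leq\frac{\varnorm{f}}{n}\sum_{\ell=1}^{n}\rho^{\floor{\frac{\ell}{r}}}$; since the partial sums of $\rho^{\floor{\frac{\ell}{r}}}$ stay below $\frac{r}{1-\rho}$, the right-hand side is at most $\frac{r\varnorm{f}}{n(1-\rho)}\to0$. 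For~\ref{it:advanced:bounding:expectations} I would first rewrite $\ljoint[k](f)=\ljoint[1](\ltrans^{k-1}f)$ via Corollary~\ref{corol:LILE3}, then apply~\eqref{eq:basic:bounding:eleft} with $\ljoint[a]=\ljoint[1]$ and $\ell=k-1$ to obtain the analogous Cesàro estimate $\abs{\frac{1}{n}\sum_{k=1}^{n}\ljoint[k](f)-\ljoint[\infty](f)}\leq\frac{\varnorm{f}}{n}\sum_{k=1}^{n}\rho^{\floor{\frac{k-1}{r}}}$, which again vanishes.

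I expect no genuine obstacle here: the whole lemma is a bookkeeping exercise that converts each of the estimates of Lemma~\ref{lem:basic:bounding} into a statement about averages. The only point requiring a moment's care is the distinction between statement~\ref{it:advanced:bounding:fixed:trans}, where the averaged quantity is itself uniformly small, and statements~\ref{it:advanced:bounding:avgain:fixed:state} and~\ref{it:advanced:bounding:expectations}, where it is not; in the latter two one must use that the partial sums of the summable sequence $\rho^{\floor{\frac{\ell}{r}}}$ remain bounded, so that it is the prefactor $\frac{1}{n}$ that drives the Cesàro average to zero.
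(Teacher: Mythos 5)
Your proposal is correct and follows essentially the same route as the paper's proof: part~\ref{it:advanced:bounding:avgain} is obtained by bounding each of the $n$ summands of $n\avgain{\ltrans^{\ell}f}(\stateto{n})$ by $\varnorm{f}\rho^{\floor{\nicefrac{\ell}{r}}}$ via Lemma~\ref{lem:basic:bounding}, and the three limits follow from the termwise estimates~\eqref{eq:basic:bounding:einfty} and~\eqref{eq:basic:bounding:eleft} together with the bound $\sum_{\ell}\rho^{\floor{\nicefrac{\ell}{r}}}\leq\nicefrac{r}{(1-\rho)}$, exactly as in the paper. No gaps.
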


\begin{proof}
Recall from Equation~\eqref{eq:average:gain} that:
\begin{equation*}
n\avgain{\ltrans^{\ell}f}(\stateto{n})
=\sqgroup[\big]{\ltrans^{\ell}f(\state{1})-\ljoint[1](\ltrans^{\ell}f)}
+\sum_{k=2}^{n}\sqgroup[\big]{\ltrans^{\ell}f(\state{k})-\ltrans^{\ell+1}f(\state{k-1})}.
\end{equation*}
If we also invoke Lemma~\ref{lem:basic:bounding}, we find that:
\begin{align*}
n\abs[\big]{\avgain{\ltrans^{\ell}f}(\stateto{n})}
&\leq\abs[\big]{\ltrans^{\ell}f(\state{1})-\ljoint[1](\ltrans^{\ell}f)} 
+\sum_{k=2}^{n}\abs[\big]{\ltrans^{\ell}f(\state{k})-\ltrans^{\ell+1}f(\state{k-1})}\\
&\leq\varnorm{f}\rho^{\floor{\frac{\ell}{r}}}
+\sum_{k=2}^{n}\varnorm{f}\rho^{\floor{\frac{\ell}{r}}}
=n\varnorm{f}\rho^{\floor{\frac{\ell}{r}}},
\end{align*}
which proves statement~\ref{it:advanced:bounding:avgain}.
Similarly, by Lemma~\ref{lem:basic:bounding}:
\begin{equation*}
\abs[\bigg]{\frac{1}{n}\sum_{k=1}^{n}\sqgroup[\big]{\ltrans^{n}f(\state{k})-\ljoint[\infty](f)}}
\leq\frac{1}{n}\sum_{k=1}^{n}\abs[\big]{\ltrans^{n}f(\state{k})-\ljoint[\infty](f)}
\leq\frac{1}{n}\sum_{k=1}^{n}\varnorm{f}\rho^{\floor{\frac{n}{r}}}
=\varnorm{f}\rho^{\floor{\frac{n}{r}}},
\end{equation*}
which proves statement~\ref{it:advanced:bounding:fixed:trans}.
Similarly, again by Lemma~\ref{lem:basic:bounding}:
\begin{align*}
\abs[\bigg]{\frac{1}{n}\sum_{\ell=1}^{n}\sqgroup[\big]{\ltrans^{\ell}f(\state{n})-\ljoint[\infty](f)}}
&\leq\frac{1}{n}\sum_{\ell=1}^{n}\abs[\big]{\ltrans^{\ell}f(\state{n})-\ljoint[\infty](f)}\leq\frac{1}{n}\sum_{\ell=1}^{n}\varnorm{f}\rho^{\floor{\frac{\ell}{r}}}\\
&\leq\frac{\varnorm{f}}{n}\sum_{\ell=0}^{\infty}\rho^{\floor{\frac{\ell}{r}}}=\frac{\varnorm{f}}{n}r\sum_{s=0}^{\infty}\rho^{s}=\frac{\varnorm{f}}{n}\frac{r}{1-\rho},
\end{align*}
which proves statement~\ref{it:advanced:bounding:avgain:fixed:state}.
Finally, by Lemma~\ref{lem:basic:bounding} and an argumentation similar to our proof for statement~\ref{it:advanced:bounding:avgain:fixed:state}:
\begin{align*}
\abs[\bigg]{\frac{1}{n}\sum_{k=1}^{n}\ljoint[k](f)-\ljoint[\infty](f)}
\leq\frac{1}{n}\sum_{k=1}^{n}\abs[\big]{\ljoint[k](f)-\ljoint[\infty](f)}
&=\frac{1}{n}\sum_{k=1}^{n}\abs[\big]{\ljoint[1](\ltrans^{k-1}f)-\ljoint[\infty](f)}\\
&\leq\frac{1}{n}\sum_{k=1}^{n}\varnorm{f}\rho^{\floor{\frac{k-1}{r}}}
\leq\frac{\varnorm{f}}{n}\frac{r}{1-\rho},
\end{align*}
which proves statement~\ref{it:advanced:bounding:expectations}.
\end{proof}

We can now prove our main result.

\begin{theorem}[Point-wise ergodic theorem]\label{thm:point-wise:ergodic:theorem}
Consider an imprecise Markov chain with initial---or marginal---model $\ljoint[1]$ and lower transition operator\/ $\ltrans$.
Assume that\/ $\ltrans$ is {\pflike}, with invariant lower expectation $\ljoint[\infty]$.
Then for all $f\in\gambles$:
\begin{equation*}
\liminf\ergodic{f}\geq0
\text{ strictly almost surely},
\end{equation*}
and consequently,
\begin{equation*}
\liminf_{n\to\infty}\frac{1}{n}\sum_{k=1}^{n}f(\state{k})\geq\ljoint[\infty](f)
\text{ strictly almost surely}.
\end{equation*}
\end{theorem}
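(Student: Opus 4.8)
The plan is to reduce the statement to the strong law of large numbers for submartingale differences (Corollary~\ref{cor:slln:submartingale:differences}) by means of the interesting equality~\eqref{eq:finish}. That equality writes $\ergodic{f}(\stateto{n})$ as $\sum_{\ell=0}^{n-1}\avgain{\ltrans^{\ell}f}(\stateto{n})$ plus the two boundary terms $\frac1n\sum_{k=1}^{n}\ltrans^{n}f(\state{k})$ and $-\frac1n\sum_{\ell=1}^{n}\ltrans^{\ell}f(\state{n})$. By Lemma~\ref{lem:advanced:bounding}\ref{it:advanced:bounding:fixed:trans} and~\ref{it:advanced:bounding:avgain:fixed:state} both boundary terms converge to $\ljoint[\infty](f)$, and since the bounds in that lemma are uniform in the path they converge \emph{surely}; hence their difference tends to $0$ on every path. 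Consequently $\liminf\ergodic{f}=\liminf_{n}\sum_{\ell=0}^{n-1}\avgain{\ltrans^{\ell}f}(\stateto{n})$ pointwise, so it suffices to show this last $\liminf$ is non-negative strictly almost surely.

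The engine is the observation that for any gamble $g$ the gain process $\gain{g}$ of Equation~\eqref{eq:gain} is a submartingale with uniformly bounded differences. Indeed, its initial difference $g(\state{1})-\ljoint[1](g)$ has local lower expectation $\ljoint[1](g)-\ljoint[1](g)=0$ under the marginal model $\lmargin{\cdot}=\ljoint[1]$ by constant additivity~\ref{it:lex:constant:additivity}, while in every later situation $\xvalto{n}$ the difference $g(\state{n+1})-\ltrans g(\xval{n})$ has local lower expectation $\ltransition{g}{\xval{n}}-\ltrans g(\xval{n})=0$ by the Markov condition~\eqref{eq:markov:condition} and~\ref{it:lex:constant:additivity}. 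Since $g$ and $\ltrans g$ are gambles, $\Delta\gain{g}$ is uniformly bounded, and a direct check from the definitions gives $\avg{\gain{g}}=\avgain{g}$. Corollary~\ref{cor:slln:submartingale:differences} therefore yields $\liminf\avgain{g}\geq0$ strictly almost surely for every gamble $g$, in particular for each $g=\ltrans^{\ell}f$ with $\ell\in\natswithzero$.

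The main obstacle is that the number of summands in $\sum_{\ell=0}^{n-1}\avgain{\ltrans^{\ell}f}(\stateto{n})$ grows with $n$, so the per-$\ell$ law cannot be applied termwise after fixing a path. The remedy is the geometric tail estimate Lemma~\ref{lem:advanced:bounding}\ref{it:advanced:bounding:avgain}, $\abs{\avgain{\ltrans^{\ell}f}(\stateto{n})}\leq\varnorm{f}\rho^{\floor{\ell/r}}$, which is uniform in both $n$ and the path. Setting $\delta_L\coloneqq\sum_{\ell>L}\varnorm{f}\rho^{\floor{\ell/r}}$ (a convergent tail, so $\delta_L\to0$), for every $n>L$ one has $\sum_{\ell=0}^{n-1}\avgain{\ltrans^{\ell}f}(\stateto{n})\geq\sum_{\ell=0}^{L}\avgain{\ltrans^{\ell}f}(\stateto{n})-\delta_L$. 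Let $N_\ell$ be the strictly null event where $\liminf_{n}\avgain{\ltrans^{\ell}f}<0$; since a countable union of strictly null events is again strictly null---combine the associated test supermartingales with summable positive weights, exactly as in the proof of Theorem~\ref{thm:slln:submartingale:differences}---the event $N\coloneqq\bigcup_{\ell\in\natswithzero}N_\ell$ is strictly null. Off $N$ every $\liminf_{n}\avgain{\ltrans^{\ell}f}(\stateto{n})\geq0$, so taking $\liminf_{n}$ in the displayed bound and using superadditivity of $\liminf$ over the \emph{finite} sum gives $\liminf_{n}\sum_{\ell=0}^{n-1}\avgain{\ltrans^{\ell}f}(\stateto{n})\geq-\delta_L$; letting $L\to\infty$ yields $\geq0$ off $N$. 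This establishes $\liminf\ergodic{f}\geq0$ strictly almost surely.

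For the consequence, I would split $\ergodic{f}(\stateto{n})=\frac1n\sum_{k=1}^{n}f(\state{k})-\frac1n\sum_{k=1}^{n}\ljoint[k](f)$, observe that Lemma~\ref{lem:advanced:bounding}\ref{it:advanced:bounding:expectations} forces the deterministic sequence $\frac1n\sum_{k=1}^{n}\ljoint[k](f)$ to converge to $\ljoint[\infty](f)$ surely, and conclude that $\liminf_{n}\frac1n\sum_{k=1}^{n}f(\state{k})=\liminf\ergodic{f}+\ljoint[\infty](f)\geq\ljoint[\infty](f)$ strictly almost surely. The only genuinely delicate point is the double-limit interchange in the third paragraph, which is precisely what the uniform geometric bound together with the countable-union stability of strictly null events is designed to control.
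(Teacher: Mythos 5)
Your proof is correct and follows essentially the same route as the paper's: identity~\eqref{eq:finish}, the geometric bounds of Lemma~\ref{lem:advanced:bounding}, Corollary~\ref{cor:slln:submartingale:differences} applied to gain processes, and a countable convex combination of the resulting test supermartingales---which is a real supermartingale only because of the uniform growth bound $\test[(\ell)](\xvalto{n})\leq(\nicefrac{3}{2})^{n}$ inherited from Lemma~\ref{lem:martinwlln}, so your blanket claim that countable unions of strictly null events are strictly null should be read as specific to this construction rather than as a general fact. The only organisational difference is that you invoke the strong law once per exponent $\ell$ and then use superadditivity of $\liminf$ over a finite sum together with the tail $\delta_L$, whereas the paper bundles the first $rq$ terms into the single gamble $g_q=\sum_{\ell=0}^{rq-1}\ltrans^{\ell}f$ and uses superadditivity of the local models to bound $\avgain{g_q}$ by $\sum_{\ell=0}^{rq-1}\avgain{\ltrans^{\ell}f}$; both variants rest on exactly the same ingredients and yield the same conclusion.
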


\begin{proof}
We begin with the first inequality.
Let $r$ be the smallest natural number such that $\rho\coloneqq\coe(\ltrans^r)<1$.
Consider any $q\in\nats$, and let $g_q\coloneqq\sum_{\ell=0}^{rq-1}\ltrans^\ell f$, then it follows from Equation~\eqref{eq:gain} and~\ref{it:lex:super} that for all $n\in\nats$:
\begin{equation*}
\gain{g_q}(\stateto{n})\leq\sum_{\ell=0}^{rq-1}\gain{\ltrans^\ell f}(\stateto{n})
\text{ and therefore }
\avgain{g_q}(\stateto{n})\leq\sum_{\ell=0}^{rq-1}\avgain{\ltrans^\ell f}(\stateto{n}).
\end{equation*}
Hence, if we also take into account Equation~\eqref{eq:finish} and Lemma~\ref{lem:advanced:bounding}, we find that: 
\begin{align}
\liminf\ergodic{f}
&=\liminf_{n\to\infty}\sum_{\ell=0}^{n-1}\avgain{\ltrans^{\ell}f}(\stateto{n})\notag\\
&\geq\liminf_{n\to\infty}\sum_{\ell=0}^{rq-1}\avgain{\ltrans^{\ell}f}(\stateto{n})
+\liminf_{n\to\infty}\sum_{\ell=rq}^{n-1}\avgain{\ltrans^{\ell}f}(\stateto{n})\notag\\
&\geq\liminf\avgain{g_q}
-\varnorm{f}\limsup_{n\to\infty}\sum_{\ell=rq}^{n-1}\rho^{\floor{\frac{\ell}{r}}}\notag\\
&=\liminf\avgain{g_q}
-\varnorm{f}\sum_{\ell=rq}^{\infty}\rho^{\floor{\frac{\ell}{r}}}
\geq\liminf\avgain{g_q}
-\varnorm{f}r\frac{\rho^q}{1-\rho}.\label{eq:handigeformule}
\end{align}
By combining Equation~\eqref{eq:gain} with the coherence [\ref{it:lex:more:bounds} and \ref{it:lex:constant:additivity}] of the local models of the Markov chain, we see that $\gain{g_q}$ is a submartingale for which $\Delta\gain{g_q}$ is uniformly bounded. 
It therefore follows from our strong law of large numbers for submartingale differences [Corollary~\ref{cor:slln:submartingale:differences}] that $\liminf\avgain{g_q}\geq0$ strictly almost surely, meaning that there is some test supermartingale $\test[(q)]$ that converges to $+\infty$ on any path $\pth$ for which $\liminf\avgain{g_q}<0$. 
Furthermore, by the argumentation in the proof of Corollary~\ref{cor:slln:submartingale:differences}, we also know that $0\leq\test[(q)](\xvalto{n})\leq(\frac{3}{2})^{n}$ for all $n\in\nats$ and $\xvalto{n}\in\statesto{n}$. 
If we now invoke Equation~\eqref{eq:handigeformule}, we see that $\test[(q)]$  converges to $+\infty$ on any path $\pth$ where $\liminf_{n\to\infty}\ergodic{f}(\pth)<-\varnorm{f}\frac{r\rho^q}{1-\rho}$.

Now consider any sequence of positive real numbers $w^{(q)}$ such that $\sum_{q\in\nats}w^{(q)}=1$, then it follows from the considerations above that the sequence of non-negative real numbers $a_i(\xvalto{n})\coloneqq\sum_{q=1}^{i}w^{(q)}\test[(q)](\xvalto{n})$, $i\in\nats$ is non-decreasing and bounded above by $(\frac{3}{2})^{n}$, and therefore converges to a non-negative real number, for all $n\in\nats$ and $\xvalto{n}\in\statesto{n}$.
Hence, we can define the real process $\test\coloneqq\sum_{q\in\nats}w^{(q)}\test[(q)]$, which clearly converges to $+\infty$ on any path $\pth$ where $\liminf_{n\to\infty}\ergodic{f}(\pth)<0$. 
Moreover, $\test(\init)=1$ and $\test$ is non-negative.
So we are done with the first inequality if we can prove that $\test$ is a supermartingale.
Consider, therefore, any situation $s$ and any $\sitinlocal{\cdot}{s}\in\solp(\sitinllocal{\cdot}{s})$, then, if we denote its (probability) mass function by $p(\cdot\vert s)$:
\begin{align*}
\sitinlocal{\Delta\test}{s}
&=\sum_{x\in\stateset}p(x\vert s)\Delta\test(s)(x)
=\sum_{x\in\stateset}p(x\vert s)\sum_{q\in\nats}w^{(q)}\Delta\test[(q)](s)(x)\\
&=\sum_{q\in\nats}w^{(q)}\sum_{x\in\stateset}p(x\vert s)\Delta\test[(q)](s)(x)
=\sum_{q\in\nats}w^{(q)}\sitinlocal{\Delta\test[(q)](s)}{s}
\leq0,
\end{align*}
where the inequality follows from $\sitinlocal{\Delta\test[(q)](s)}{s}\leq\sitinulocal{\Delta\test[(q)](s)}{s}\leq0$; see Equation~\eqref{eq:linear:dominates}.
If we now recall Equation~\eqref{eq:lower:envelope}, we see that indeed $\sitinulocal{\Delta\test(s)}{s}\leq0$.

The second inequality is equivalent with the first by Lemma~\ref{lem:advanced:bounding}\ref{it:advanced:bounding:expectations}.
\end{proof}

We can fairly easily extend this result to gambles that depend on a finite number of states.

\begin{corollary}\label{cor:point-wise:ergodic:theorem}
Consider an imprecise Markov chain with initial---or marginal---model $\ljoint[1]$ and lower transition operator\/ $\ltrans$.
Assume that\/ $\ltrans$ is {\pflike}, with invariant lower expectation $\ljoint[\infty]$.
Then for all $f\in\gamblessymbol(\stateset^r)$, with $r\in\nats$:
\begin{equation}\label{eq:point-wise:ergodic:theorem:finite}
\liminf_{n\to\infty}\frac{1}{n}\sum_{k=1}^{n}f(\statefromto{k}{k+r-1})\geq\statljoint(f(\statefromto{1}{r}))
\text{ strictly almost surely}.
\end{equation}
\end{corollary}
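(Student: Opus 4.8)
The plan is to reduce the statement to the single-state point-wise ergodic theorem (Theorem~\ref{thm:point-wise:ergodic:theorem}) by \emph{lumping} $r$ consecutive states into one. Concretely, I would pass to the \emph{augmented process} whose state at time $k$ is the window $\tilde{\state{k}}\coloneqq\statefromto{k}{k+r-1}$, taking values in the finite set $\stateset^r$. Then $f(\statefromto{k}{k+r-1})=f(\tilde{\state{k}})$ becomes a gamble of a \emph{single} augmented state, so the left-hand side of~\eqref{eq:point-wise:ergodic:theorem:finite} is exactly the ergodic average of the augmented chain. The first task is to check that $(\tilde{\state{k}})_{k\in\nats}$ is again an imprecise Markov chain: given the history $\statefromto{1}{k+r-1}$, the only fresh state in $\tilde{\state{k+1}}=\statefromto{k+1}{k+r}$ is $\state{k+r}$, whose local model is $\ltransition{\cdot}{\state{k+r-1}}$ by the Markov condition~\eqref{eq:markov:condition}; since $\state{k+r-1}$ is the last coordinate of $\tilde{\state{k}}$, this local model depends on $\tilde{\state{k}}$ only. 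Writing $\pi(\tilde x)\coloneqq x_r$ for the projection of $\tilde x=(x_1,\dots,x_r)$ onto its last coordinate, the lower transition operator of the augmented chain is $\widetilde{\ltrans}g(\tilde x)=\ltransition{g(x_2,\dots,x_r,\cdot)}{x_r}$ (a bona fide lower transition operator, being $\ltransition{\cdot}{x_r}$ composed with a constant-preserving positive linear map), and its marginal is $g\mapsto\ljoint(g(\statefromto{1}{r}))$.

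The crucial structural point I would establish first is that the augmented imprecise probability tree can be identified with the original one. The map $\iota$ sending a path $\pth\in\pths$ to the consistent augmented path $(\tilde{\state{1}},\tilde{\state{2}},\dots)$ is a bijection onto the consistent augmented paths, and under it an augmented situation of length $n$ corresponds to the original situation $\statefromto{1}{n+r-1}$. Because the augmented local models coincide step-for-step with the original ones (both reduce to $\ltransition{\cdot}{\state{k+r-1}}$ at the relevant step), submartingales, test supermartingales, strictly null events, and global conditional lower expectations transfer between the two trees: a test supermartingale on the augmented tree converging to $+\infty$ on a set $B$ pulls back along $\iota$ (extended to be constant on the first $r-1$ original levels) to a test supermartingale on the original tree converging to $+\infty$ on $\iota^{-1}(B)$. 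This is the device that will turn an augmented strictly-almost-sure conclusion into one for the original chain.

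Next I would verify that $\widetilde{\ltrans}$ inherits the \pflike{} property. The key identities are $\widetilde{\ltrans}(\psi\circ\pi)=(\ltrans\psi)\circ\pi$ for $\psi\in\gambles$ (immediate from the formula for $\widetilde{\ltrans}$), and the fact that $\widetilde{\ltrans}^r g$ depends on $\tilde x$ through $x_r$ alone---after $r$ steps the window no longer overlaps the initial one, so by the Markov property $\widetilde{\ltrans}^r g=\phi_g\circ\pi$ for some $\phi_g\in\gambles$. Combining these gives $\widetilde{\ltrans}^{m}g=(\ltrans^{m-r}\phi_g)\circ\pi$ for $m\geq r$, which converges pointwise to the constant $\ljoint[\infty](\phi_g)$ because $\ltrans$ is \pflike. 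Hence $\widetilde{\ltrans}$ is \pflike{} with stationary lower expectation $\widetilde{\lex}_\infty(g)=\ljoint[\infty](\phi_g)$. A short computation using the Markov property (Proposition~\ref{prop:markov:property:essential}) and shift invariance (Proposition~\ref{prop:global:shift:invariance}) identifies $\phi_g=\newlglobal{1}{g(\statefromto{1}{r})}{\cdot}$, and then Corollary~\ref{corol:LILE2} (with $\ell=1$) applied in the \emph{stationary} chain yields $\statljoint(g(\statefromto{1}{r}))=\ljoint[\infty](\newlglobal{1}{g(\statefromto{1}{r})}{\cdot})=\widetilde{\lex}_\infty(g)$. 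Thus the stationary augmented expectation is exactly the quantity on the right of~\eqref{eq:point-wise:ergodic:theorem:finite}.

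With these pieces in place the conclusion is immediate: applying Theorem~\ref{thm:point-wise:ergodic:theorem} to the augmented chain and the single-state gamble $f$ gives $\liminf_{n\to\infty}\frac1n\sum_{k=1}^n f(\tilde{\state{k}})\geq\widetilde{\lex}_\infty(f)$ strictly almost surely in the augmented tree, and transferring along $\iota$ using the identification above turns this into~\eqref{eq:point-wise:ergodic:theorem:finite}. I expect the only genuinely delicate step to be the tree identification of the second paragraph---matching the augmented imprecise probability tree to the original one and, in particular, transporting the strict-nullity certificate---since everything afterwards is a direct application of results already proved. A reasonable alternative that avoids formalising a second tree would be to redo the interesting equality of Section~\ref{sec:interesting:equality} with $\widetilde{\ltrans}$ and the windowed increments $f(\statefromto{k}{k+r-1})-\widetilde{\ltrans}f(\statefromto{k-1}{k+r-2})$ directly in $\pths$, observing that the resulting gain process is an original submartingale with uniformly bounded differences and then invoking Corollary~\ref{cor:slln:submartingale:differences}; this keeps all martingales in the original tree at the cost of repeating the bookkeeping of Lemma~\ref{lem:advanced:bounding}.
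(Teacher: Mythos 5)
Your proposal takes a genuinely different route from the paper's. The paper proves the corollary by induction on $r$: for the step from $q$ to $q+1$ it builds, directly in the original tree, the submartingale whose differences are $f(\statefromto{k+1}{k+q+1})-\statetoinlglobal{f(\statefromto{k+1}{k+q+1})}{k+q}$, applies the strong law for submartingale differences (Corollary~\ref{cor:slln:submartingale:differences}) to it, and then feeds the $q$-state gamble $g(\stateto{q})=\statljoint(f(\statefromto{1}{q+1})\vert\stateto{q})$ into the induction hypothesis, closing the loop with the law of iterated lower expectations. Your lumping construction instead reduces everything to the $r=1$ case for an augmented chain on $\stateset^r$. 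Your identification of the augmented lower transition operator, the verification that it is {\pflike} (indeed $\widetilde{\ltrans}^{\,r-1}g$ already factors through the last coordinate), and the computation $\widetilde{\lex}_\infty(f)=\ljoint[\infty](\newlglobal{1}{f(\statefromto{1}{r})}{\cdot})=\statljoint(f(\statefromto{1}{r}))$ are all correct and produce exactly the right-hand side of~\eqref{eq:point-wise:ergodic:theorem:finite}. What your approach buys is conceptual economy: no second induction, and the windowed ergodic theorem literally \emph{is} the single-state one for a derived chain. What the paper's approach buys is that every martingale lives in the original tree, so no transfer of strict nullity between two imprecise probability trees is ever needed; your own suggested alternative (redoing the bookkeeping of Section~\ref{sec:interesting:equality} with the windowed increments but keeping all processes in $\pths$) is in spirit much closer to what the paper actually does.

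The one concrete flaw is in the transfer step you yourself flag as delicate. An augmented test supermartingale $\widetilde{\test}$ cannot be pulled back to the original tree by making it ``constant on the first $r-1$ original levels'': the augmented root condition only gives the single averaged inequality $\ujoint(\widetilde{\test}(\statefromto{1}{r}))\leq1$, whereas a supermartingale in the original tree must satisfy the pointwise local condition $\xtoinulocal{\test(\xvalto{r-1}\andstate)}{r-1}\leq\test(\xvalto{r-1})$ at \emph{every} $\xvalto{r-1}$; a constant extension violates this whenever $\widetilde{\test}$ exceeds $1$ somewhere on $\statesto{r}$. The repair is to interpolate the first $r$ levels by conditional upper expectations: set $h\coloneqq\widetilde{\test}(\statefromto{1}{r})\in\gamblesto{r}$ and $\test(\xvalto{m})\coloneqq\xtoinuglobal{h}{m}$ for $0\leq m\leq r$, which is a non-negative supermartingale over those levels by Corollary~\ref{cor:global:and:local:gambles} and Theorem~\ref{thm:law:of:iterated:expectations:general}, with $\test(\init)=\ujoint(h)\leq1$; then follow $\widetilde{\test}$ along consistent situations from level $r$ onwards, and add the constant $1-\test(\init)$ to obtain a test supermartingale on the original tree that converges to $+\infty$ exactly where $\widetilde{\test}\circ\iota$ does. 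With that correction your argument goes through.
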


\begin{proof}
We give a proof by induction. 
We know from Theorem~\ref{thm:point-wise:ergodic:theorem} that Equation~\eqref{eq:point-wise:ergodic:theorem:finite} holds for $r=1$.
Now consider any $q\in\nats$, and assume as our induction hypothesis that Equation~\eqref{eq:point-wise:ergodic:theorem:finite} holds for $r=q$, then we prove that it also holds for $r=q+1$.

Consider any $f\in\gamblessymbol(\stateset^{q+1})$, and define the real process $\submartin$ by letting $\submartin(\stateto{\ell})\coloneqq0$ for $\ell=0,1,\dots,q$ and
\begin{equation*}
\submartin(\stateto{q+n})
\coloneqq\sum_{k=0}^{n-1}
\sqgroup[\big]{f(\statefromto{k+1}{k+q+1})-\statetoinlglobal{f(\statefromto{k+1}{k+q+1})}{k+q}}
\text{ for all $n\in\nats$}.
\end{equation*}
Then for any $n\in\natswithzero$ and any situation $\xvalto{q+n}\in\statesto{q+n}$, we find that
\begin{align*}
\Delta\submartin(\xvalto{q+n})(\state{q+n+1})
&=\submartin(\xvalto{q+n}\state{q+n+1})-\submartin(\xvalto{q+n})\\
&=f(\xvalfromto{n+1}{n+q}\state{q+n+1})
-\xtoinlglobal{f(\statefromto{n+1}{q+n+1})}{n+q}\\
&=f(\xvalfromto{n+1}{n+q}\state{q+n+1})
-\xtoinllocal{f(\xvalfromto{n+1}{n+q}\state{q+n+1})}{n+q},
\end{align*}
where the last equality follows from Corollary~\ref{cor:global:and:local:gambles}, and therefore, coherence [\ref{it:lex:constant:additivity}] implies that $\xtoinllocal{\Delta\submartin(\xvalto{q+n})}{{q+n}}=0$. Similary, for any $\ell\in\{0,1,\dots,q-1\}$ and any $\xvalto{\ell}\in\statesto{\ell}$, coherence [\ref{it:lex:more:bounds}] implies that $\xtoinllocal{\Delta\submartin(\xvalto{\ell})}{{\ell}}=\xtoinllocal{0}{{\ell}}=0$. Hence, we conclude that $\submartin$ is a submartingale, whose differences are uniformly bounded [because $f$ is, trivially so].
Corollary~\ref{cor:slln:submartingale:differences} then tells us that $\liminf\avg{\submartin}\geq0$ strictly almost surely, or in other words that there is a test supermartingale $\test$ that converges to $+\infty$ on the event $A\coloneqq\cset{\pth\in\pths}{\liminf\avg{\submartin}(\pth)<0}$.

Now observe that---keeping in mind that the second terms always lie between $\min f$ and $\max f$, due to \ref{it:lex:more:bounds}:
\begin{align}
\liminf_{n\to\infty}\frac{1}{n}\sum_{k=1}^{n}f(\statefromto{k}{k+q})
&=\liminf_{n\to\infty}
\sqgroup[\bigg]{\avg{\submartin}(\stateto{n+q})
+\frac{1}{n}\sum_{k=0}^{n-1}\statetoinlglobal{f(\statefromto{k+1}{k+q+1})}{k+q}}
\notag\\
&\geq\liminf_{n\to\infty}\avg{\submartin}(\stateto{n+q})
+\liminf_{n\to\infty}\frac{1}{n}\sum_{k=0}^{n-1}\statetoinlglobal{f(\statefromto{k+1}{k+q+1})}{k+q}.
\label{eq:point-wise:ergodic:theorem:intermediate:first}
\end{align}
If we consider the gamble 
$g(\stateto{q})\coloneqq\statljoint(f(\statefromto{1}{q+1})\vert\stateto{q})$ 
that only depends on the first $q$ states, then it follows from Corollary~\ref{cor:global:and:local:gambles} and the Markov condition~\eqref{eq:markov:condition} that for all $\xvalto{q}\in\statesto{q}$
\begin{equation}
g(\xvalto{q})
=\statljoint(f(\statefromto{1}{q+1})\vert\xvalto{q})
=\xtoinllocal{f(\xvalfromto{1}{q}\state{q+1})}{q}
=\xinllocal{f(\xvalfromto{1}{q}\state{q+1})}{q}.
\label{eq:point-wise:ergodic:theorem:intermediate:second}
\end{equation}
Similarly, it follows from Corollary~\ref{cor:global:and:local:gambles}, the Markov condition~\eqref{eq:markov:condition} and Equation~\eqref{eq:point-wise:ergodic:theorem:intermediate:second} that for all $k\in\natswithzero$ and all $\xvalto{k+q}\in\statesto{k+q}$
\begin{align*}
\xtoinlglobal{f(\statefromto{k+1}{k+q+1})}{k+q}
&=\xtoinllocal{f(\xvalfromto{k+1}{k+q}\state{k+q+1})}{k+q}\\
&=\xinllocal{f(\xvalfromto{k+1}{k+q}\state{k+q+1})}{k+q}
=g(\xvalfromto{k+1}{k+q}),
\end{align*}
and therefore the inequality~\eqref{eq:point-wise:ergodic:theorem:intermediate:first} can be rewritten as
\begin{equation}\label{eq:point-wise:ergodic:theorem:intermediate:third}
\liminf_{n\to\infty}\frac{1}{n}\sum_{k=1}^{n}f(\statefromto{k}{k+q})
\geq\liminf_{n\to\infty}\avg{\submartin}(\stateto{n+q})
+\liminf_{n\to\infty}\frac{1}{n}\sum_{k=1}^{n}g(\statefromto{k}{k+q-1}).
\end{equation}
We infer from the induction hypothesis that for the second term on the right-hand side
\begin{equation*}
\liminf_{n\to\infty}\frac{1}{n}\sum_{k=1}^{n}g(\statefromto{k}{k+q-1})
\geq\statljoint(g(\statefromto{1}{q}))
\text{ strictly almost surely},
\end{equation*}
meaning that there is some test supermartingale $\test[*]$ that converges to $+\infty$ on the set $B$ of all paths where this inequality does not hold.
This in turn implies that 
\begin{equation*}
\liminf_{n\to\infty}\frac{1}{n}\sum_{k=1}^{n}f(\statefromto{k}{k+q})
\geq\statljoint(g(\statefromto{1}{q}))
\text{ strictly almost surely},
\end{equation*}
because it follows from~\eqref{eq:point-wise:ergodic:theorem:intermediate:third} that the paths where this inequality does not hold must belong to $A\cup B$, where the test supermartingale $\frac{1}{2}(\test+\test[*])$ converges to $+\infty$.
Now observe that
\begin{equation*}
\statljoint(g(\statefromto{1}{q}))
=\statljoint(\statljoint(f(\statefromto{1}{q+1})\vert\stateto{q}))
=\statljoint(f(\statefromto{1}{q+1})),
\end{equation*}
by Theorem~\ref{thm:law:of:iterated:expectations:general} [with $m\coloneqq0$ and $n\coloneqq q$].
\end{proof}

\section{Conclusions and discussion}\label{sec:conclusion}

We have motivated expressions for joint lower and upper expectations on extended real-valued variables for imprecise Markov chains (with finite state spaces), and proved various interesting properties for them.
This has allowed us to deal quite elegantly with transition and return times, but we expect our approach to be equally useful in other problems involving unbounded and/or extended real-valued variables.

We have also proved versions of the point-wise ergodic theorem for our imprecise Markov chains, involving (bounded) functions of a finite number of states. 
It is a subject of current research whether this result can be extended to gambles that depend on the entire state trajectory, and not just on a finite number of states.

Our version in Theorem~\ref{thm:point-wise:ergodic:theorem} subsumes the one for (precise) Markov chains discussed in the Introduction, because there $\ljoint[\infty](f)=\ujoint[\infty](f)=\joint[\infty](f)$ and therefore
\begin{multline*}
\joint[\infty](f)
=\ujoint[\infty](f)
\geq\limsup_{n\to\infty}\frac{1}{n}\sum_{k=1}^{n}f(\state{k})
\geq\liminf_{n\to\infty}\frac{1}{n}\sum_{k=1}^{n}f(\state{k})
\geq\ljoint[\infty](f)
=\joint[\infty](f)\\
\text{ strictly almost surely,}
\end{multline*} 
implying that $\frac{1}{n}\sum_{k=1}^{n}f(\state{k})$ converges to $\joint[\infty](f)$ (strictly) almost surely.
In our more general case, however, we cannot generally prove that there is almost sure \emph{convergence}, and we retain only almost sure inequalities involving limits inferior and superior, as is also the case for our strong law of large numbers for submartingale differences.
Indeed, that such convergence should not really be expected for imprecise probability models was already argued by \citet{walley1982a}.

Ergodicity results for Markov chains are quite relevant for applications in queuing theory, where they are for instance used to prove Little's law~\cite{whitt1991}, or ASTA (Arrivals See Time Averages) properties~\cite{makowski1989}.
We believe the discussion in this paper could be instrumental in deriving similar properties for queues where the probability models for arrivals and departures are imprecise.

\section*{Acknowledgements}
Research by Gert de Cooman and Stavros Lopatatzidis was funded through project number G012512N of the Research Foundation -- Flanders (FWO). 
Jasper De Bock is a Postdoctoral Fellow of the FWO and wishes to acknowledge its financial support. 
The authors would like to express their gratitude to three anonymous referees for their comments on a conference version of this paper, and to Tom Ward and Volodya Vovk for taking the time to discuss some of the ideas behind this paper.

\end{document}

\appendix
\section{Proving that the null and strictly null events are the same}\label{sec:appendix}
In this Appendix, we provide more context for Proposition~\ref{prop:strictly:then:null}.

It will be helpful to consider extended real processes $\process$ that may assume values in $(-\infty,+\infty]=\reals\cup\set{+\infty}$.
We also use the convention that for any real $\lambda>0$, $\lambda\cdot+\infty=+\infty$, whereas $0\cdot+\infty=0$.
We call such a process $\process$ an \emph{extended supermartingale} if for all $n\in\natswithzero$ and all situations $\xvalto{n}\in\statesto{n}$, $\xtoinulocal{\process(\xvalto{n}\andstate)}{n}\leq\process(\xvalto{n})$, meaning that
\begin{equation*}
\sum_{x\in\states{k+1}}p(x)\process(\xvalto{n}\,x)\leq\process(\xvalto{n})
\text{ for all $p\in\solp(\xtoinllocal{\cdot}{n})$}.
\end{equation*}
We denote the set of all extended supermartingales by $\esupermartins$.
Then obviously $\supermartins\subseteq\esupermartins$: any supermartingale is an extended supermartingale.

We first prove that in the expression for upper expectations, extended supermartingales can be substituted for supermartingales.

\begin{lemma}
For any extended real variable $f$ and any situation $s$:
\begin{equation}\label{eq:null:iff:strictly:null:uprevextended}
\sitinuglobal{f}{s}=\inf\cset{\esupermartin(s)}
{\esupermartin\in\esupermartins
\text{ and }
(\forall\pth\in\exact{s})\liminf\esupermartin(\pth)\geq f(\pth)},
\end{equation}
\end{lemma}

*** This definitely works for variables that are bounded above, we must see if it works in general too. But so it definitely works for events. ***

\begin{proof}
We may assume without loss of generality that $s=\init$.
Let us denote the corresponding right-hand side in Equation~\eqref{eq:null:iff:strictly:null:uprevextended} by $\ujoint^*(f)$, then clearly $\ujoint(f)\geq\ujoint^*(f)$, so it suffices to prove that $\ujoint(f)\leq\ujoint^*(f)$.

If $\ujoint(f)=+\infty$, there is no supermartingale $\supermartin$ such that $\liminf\supermartin\geq f$, but there will of course be extended supermartingales $\esupermartin$ such that $\liminf\esupermartin\geq f$.
Suppose that there is some such $\esupermartin$ for which $\esupermartin(\init)$ is real. *** continue this ***

So let us assume that $\ujoint(f)=+\infty$, meaning that there is some supermartingale $\supermartin_o$ such that $\liminf\supermartin_o\geq f$.
Now consider any $\esupermartin\in\esupermartins$ such that $\liminf\esupermartin\geq f$.
We may assume without loss of generality that $\esupermartin(\init)\in\reals$.
We will construct an $\supermartin\in\supermartins$ such that $\supermartin(\init)=\esupermartin(\init)$ and $\liminf\supermartin\geq f$.

Denote by $S$ the set of all situations $\xvalto{n}\in\sits$ such that $\esupermartin(\xvalto{n})\in\reals$ and $J(\xvalto{n})\coloneqq\cset{x\in\states{n+1}}{\esupermartin(\xvalto{n}\,x)=+\infty}\neq\emptyset$.
It has a non-empty set of minimal elements $\min S$, and every element of $S$ is preceded by a unique element of $\min S$.

In any situation $\xvalto{n}\,x$ with $\xvalto{n}\in\min S$ and $x\in J(\xvalto{n})$, and in all the situations $s$ that follow it, we let $\supermartin(s)\coloneqq\supermartin_o(s)$.
In any of the remaining situations $t$, $\esupermartin(t)$ must be real-valued, and we let $\supermartin(t)$ be equal to $\esupermartin(t)$. 
Then clearly $\supermartin$ is real-valued, $\supermartin(\init)=\esupermartin(\init)$ and $\liminf\supermartin\geq f$.
It remains to prove that $\supermartin$ is a supermartingale.
It suffices to concentrate on the situations $\xvalto{n}\in\min S$, because that is where $\esupermartin$ and $\supermartin_o$ are stitched together. 
Since we must have by assumption that
\begin{equation*}
\sum_{x\in\states{n+1}}p(x)\esupermartin(\xvalto{n}\,x)\leq\esupermartin(\xvalto{n})
\text{ for all $p\in\solp(\xtoinllocal{\cdot}{n})$},
\end{equation*}
if follows that $p(x)=0$ for all $x\in J(\xvalto{n})$ and all $p\in\solp(\xtoinllocal{\cdot}{n})$, guaranteeing that
\begin{equation*}
\sum_{x\in\states{n+1}}p(x)\supermartin(\xvalto{n}\,x)\leq\supermartin(\xvalto{n})
\text{ for all $p\in\solp(\xtoinllocal{\cdot}{n})$},
\end{equation*}
because $\supermartin(\xvalto{n})=\esupermartin(\xvalto{n})$ and $\supermartin(\xvalto{n}\,x)=\esupermartin(\xvalto{n}\,x)$ for all $x\in\states{n+1}\setminus J(\xvalto{n})$.
\end{proof}

We will use the equivalent expressions for $\ujoint(A)$:
\begin{align}
\uex(A)
&=\inf\cset{\supermartin(\init)}
{\supermartin\in\supermartins\text{ and }\liminf\supermartin\geq\ind{A}}
\label{eq:null:iff:strictly:null:uprob}\\
&=\inf\cset{\esupermartin(\init)}
{\esupermartin\in\esupermartins\text{ and }\liminf\esupermartin\geq\ind{A}}.
\label{eq:null:iff:strictly:null:uprobextended}
\end{align}  

Next, consider any supermartingale $\supermartin$ such that $\liminf\supermartin\geq\ind{A}$, then it follows from Lemma~\ref{lem:very:basic:inequality} and the fact that $-\supermartin$ is a submartingale that
\begin{equation}\label{eq:null:iff:strictly:null:nonnegative}
\supermartin(s)
\geq\inf_{\pth\in\exact{s}}\liminf\supermartin(\pth)
\geq\inf_{\pth\in\exact{s}}\liminf\ind{A}(\pth)
\geq0
\text{ for all situations $s\in\sits$}.
\end{equation}

We are now ready for the proof.

\emph{First of all, we prove that a strictly null event $A$ is also null.}
Assume that $A$ is strictly null, so there is some test supermartingale $\test$ that converges to $+\infty$ on $A$.
Then for any $\alpha>0$, $\alpha\test$ is a supermartingale such that $\liminf(\alpha\test)\geq\ind{A}$, and therefore we infer that $0\leq\ujoint(A)\leq\alpha\test(\init)=\alpha$, where the first inequality follows from Equation~\eqref{eq:null:iff:strictly:null:nonnegative} with $s=\init$, and the second from Equation~\eqref{eq:null:iff:strictly:null:uprob}. 

\emph{Next, we prove that a null event $A$ is also strictly null.}
Assume that the event $A$ is null, so $\ujoint(A)=0$.
Then it is easy to infer from Equation~\eqref{eq:null:iff:strictly:null:uprob} that for all $r\in\natswithzero$, there is some supermartingale $\supermartin^{(r)}$ such that 
\begin{equation*}
0<\supermartin^{(r)}(\init)<\frac{1}{2^{2r}}
\text{ and }
\liminf\supermartin^{(r)}\geq\ind{A}.
\end{equation*}
Since we infer from Equation~\eqref{eq:null:iff:strictly:null:nonnegative} that $\supermartin^{(r)}$ is non-negative, we see that the real process $\test[(r)]\coloneqq\supermartin^{(r)}/\supermartin^{(r)}(\init)$ is a test supermartingale satisfying
\begin{equation}\label{eq:null:iff:strictly:null:very:high}
\liminf\test[(r)]\geq\frac{1}{\supermartin^{(r)}(\init)}\ind{A}\geq2^{2r}\ind{A}.
\end{equation}
Now consider $\test\coloneqq\sum_{n\in\natswithzero}\frac{1}{2^r}\test[(r)]$.
This is a non-negative extended real process because in each situation the terms in the corresponding series are all non-negative. Moreover, $\test(\init)=1$, and it follows from Equation~\eqref{eq:null:iff:strictly:null:very:high} that $\test$ converges to $+\infty$ on $A$.
We now prove that $\test$ is an extended supermartingale.
We have by construction that for all situations $\xvalto{n}\in\sits$ and all $p\in\solp(\xtoinllocal{\cdot}{n})$:
\begin{equation*}
\sum_{x\in\states{n+1}}p(x)\test[(r)](\xvalto{n}\,x)\leq\test[(r)](\xvalto{n})
\text{ for all $r\in\natswithzero$},
\end{equation*}
and therefore, after multiplying with $2^{-r}$ and taking the series sum over all $r\in\natswithzero$ on both sides:
\begin{align*}
\sum_{x\in\states{n+1}}p(x)\test(\xvalto{n}\,x)
=\sum_{x\in\states{n+1}}p(x)\sum_{r\in\natswithzero}\frac{1}{2^r}\test[(r)](\xvalto{n}\,x)
&=\sum_{r\in\natswithzero}\frac{1}{2^r}\sum_{x\in\states{n+1}}p(x)\test[(r)](\xvalto{n}\,x)\\
&\leq\sum_{r\in\natswithzero}\frac{1}{2^r}\test[(r)](\xvalto{n})
=\test(\xvalto{n}),
\end{align*}
showing that $\test$ is indeed an extended supermartingale.
*** continue this \dots I now suspect that this probably won't work: I am not sure that we can prove that is there is an extended test supermartingale that converges to $+\infty$ on $A$, there will also be a test supermartingale that converges to $+\infty$ on $A$, and the crux lies there: as long as we can't show that, null and strictly null will be different, even for finite state spaces. ***

\section{Towards a general point-wise ergodic theorem}

\subsection{L\'evy's zero--one law}
We begin with a version of L\'evy's zero--one law, first proved in Ref.~\cite[Theorem~2]{shafer2012:zero-one}.
We give a suitably adapted version of the proof that works in the context we are interested in.

\begin{theorem}[\protect{\cite[Theorem~2]{shafer2012:zero-one}} L\'evy's zero--one law]\label{thm:zero:one}
Consider any extended real variable~$f$ that is bounded below.
Then
\begin{equation*}
f(\pth)\leq\liminf_{n\to+\infty}\sitinuglobal{f}{\pth^{n}}
\text{ almost surely}.
\end{equation*}
\end{theorem}

\begin{proof}
We may assume without loss of generality that $f\geq0$.
We will denote by $A$ the set of all paths $\pth$ for which the inequality does not hold, meaning that there is a positive rational couple $(a(\pth),b(\pth))$ such that 
\begin{equation*}
\liminf_{n\to+\infty}\sitinuglobal{f}{\pth^{n}}<a(\pth)<b(\pth)<f(\pth).
\end{equation*}
First, fix any $\pth\in A$.
The first inequality tells us that for every $m\in\natswithzero$ there is some $n_m\geq m$ such that $\sitinlglobal{f}{\pth^{n_m}}<a(\pth)$.
We start out with the constant test supermartingale $\test=1$, and we will modify $\test$ as follows, in order to create a test supermartingale $\test[a(\pth),b(\pth)]$ that converges to $+\infty$ on $\pth$.
Let $n_1^-$ be the smallest $n$ such that $\sitinuglobal{f}{\pth^{n}}<a(\pth)$, so
\begin{equation*}
\sitinuglobal{f}{\pth^{n_1^-}}
=\inf\cset{\supermartin(\pth^{n_1^-})}
{\supermartin\in\supermartins
\text{ and }
(\forall\pth'\in\exact{\pth^{n_1^-}})
\liminf\supermartin(\pth')\geq f(\pth')} 
<a(\pth), 
\end{equation*}  
and since $\sitinuglobal{f}{\pth^{n_1^-}}\geq0$ this implies that there is some supermartingale $\supermartin_1'$ with $0<\supermartin_1'(\pth^{n_1^-})<a(\pth)$ such that $\liminf\supermartin_1'\geq f$ on $\exact{\pth^{n_1^-}}$, and therefore also some supermartingale $\supermartin_1\coloneqq a(\pth)\supermartin_1'/\supermartin_1'(\pth^{n_1^-})$ with $\supermartin_1(\pth^{n_1^-})=a(\pth)$ such that $\liminf\supermartin_1\geq f$ on $\exact{\pth^{n_1^-}}$.
In particular, this implies that $\liminf_{n\to+\infty}\supermartin_1(\pth^{n})\geq f(\pth)>b(\pth)$, so we can find a smallest value $n_1^+$ of $n>n_1^-$ such that $\supermartin_1(\pth^{n})>b(\pth)$, and we let $m_1\coloneqq\supermartin_1(\pth^{n_1^+})$. 
We first change $\test$ into the test supermartingale $\test_1$ by making it equal to $\supermartin_1/a(\pth)$ in all situations that follow $\pth^{n_1^-}$ and keeping it constant again in all situations that follow $\pth^{n_1^+}$, so $\test_1$ reaches the value $m_1/a(\pth)>b(\pth)/a(\pth)$ in $\pth^{n_1^+}$ and remains constant from there on. 

Now let $n_2^-$ be the smallest $n>n_1^+$ such that $\sitinuglobal{f}{\pth^{n}}<a(\pth)$, so
\begin{equation*}
\sitinuglobal{f}{\pth^{n_2^-}}
=\inf\cset{\supermartin(\pth^{n_2^-})}
{\supermartin\in\supermartins
\text{ and }
(\forall\pth'\in\exact{\pth^{n_2^-}})
\liminf\supermartin(\pth')\geq f(\pth')} 
<a(\pth), 
\end{equation*}  
and as before this implies that there is some supermartingale $\supermartin_2$ with $\supermartin_2(\pth^{n_2^-})=a(\pth)$ such that $\liminf\supermartin_1\geq f$ on $\exact{\pth^{n_2^-}}$.
So again, we can find a smallest value $n_2^+$ of $n>n_2^-$ such that $\supermartin_2(\pth^{n})>b(\pth)$, and we let $m_2\coloneqq\supermartin_2(\pth^{n_1^+})$.
We now change $\test_1$ into the test supermartingale $\test_2$ by making it equal to $m_1\supermartin_2/a(\pth)^2$ in all situations that follow $\pth^{n_2^-}$ and keeping it constant again in all situations that follow $\pth^{n_2^+}$, so $\test_2$ reaches the value $m_2m_1/a(\pth)^2>[b(\pth)/a(\pth)]^2$ in $\pth^{n_2^+}$ and remains constant from there on. 

By continuing in this fashion, we see that we can construct a test supermartingale $\test[a(\pth),b(\pth)]$ such that $\test[a(\pth),b(\pth)](\pth^{n_k^+})>[b(\pth)/a(\pth)]^k$ for all $k\in\nats$, guaranteeing that $\test$ does not converge to a real number on $\pth$.

Now consider the countable set of rational couples $K\coloneqq\cset{(a(\pth),b(\pth))}{\pth\in A}$, and consider the extended real process\footnote{That this is an extended real process follows from the fact that in each situation $s$ the terms in the corresponding series are non-negative, so the series converges either to a real number, or to $+\infty$.} 
\begin{equation*}
\test\coloneqq\sum_{(a,b)\in K}w^{a,b}\test[a,b], 
\end{equation*}
a countable convex combination of the test supermartingales $\test[a,b]$, with coefficients $w^{a,b}>0$ that sum to $1$.
Then it is clear that $\test$ has $\test(\init)=0$ and is non-negative, and that it does not converge to a real number on $A$.
We now show that $\test$ is an extended supermartingale. 
We have by construction that for all situations $\xvalto{n}\in\sits$ and all $p\in\solp(\xtoinllocal{\cdot}{n})$:
\begin{equation*}
\sum_{x\in\states{n+1}}p(x)\test[a(\pth),b(\pth)](\xvalto{n}\,x)
\leq\test[a(\pth),b(\pth)](\xvalto{n})
\text{ for all $(a(\pth),b(\pth))\in K$},
\end{equation*}
and therefore, after multiplying with $w^{(a(\pth),b(\pth))}>0$ and taking the series sum over all $(a(\pth),b(\pth))\in K$ on both sides:
\begin{align*}
\sum_{x\in\states{n+1}}p(x)\test(\xvalto{n}\,x)
&=\sum_{x\in\states{n+1}}p(x)\sum_{(a(\pth),b(\pth))\in K}w^{(a(\pth),b(\pth))}\test[a(\pth),b(\pth)](\xvalto{n}\,x)\\
&=\sum_{(a(\pth),b(\pth))\in K}w^{(a(\pth),b(\pth))}\sum_{x\in\states{n+1}}p(x)\test[a(\pth),b(\pth)](\xvalto{n}\,x)\\
&\leq\sum_{(a(\pth),b(\pth))\in K}w^{(a(\pth),b(\pth))}\test[a(\pth),b(\pth)](\xvalto{n})
=\test(\xvalto{n}),
\end{align*}
showing that $\test$ is indeed an extended supermartingale.
Since the extended supermartingale $\test$ is in particular bounded below, we infer from *** an extended version of the supermartingale convergence theorem *** that there is some extended test supermartingale $\test[*]$ that converges to $+\infty$ on $A$.
Hence $(\test+\test[*])/2$ is an extended test supermartingale that converges to $+\infty$ on $A$, implying that $A$ is null.  
\end{proof}

\subsection{A consequence of L\'evy's zero--one law for imprecise Markov chains}

\begin{proposition}
Consider an imprecise Markov chain with marginal lower expectation $\ljoint[1]$ and {\pflike} lower transition operator $\ltrans$.
Then for any bounded variable $f$:
\begin{equation*}
\lim_{n\to\infty}
\frac{1}{n}\sum_{k=0}^{n-1}\ljoint(\theta^kf)
=\lim_{n\to\infty}
\frac{1}{n}\sum_{k=0}^{n-1}\ljoint[1](\newlglobal{1}{\theta^kf}{\cdot})
=\ljoint[\infty](\newlglobal{1}{f}{\cdot}).
\end{equation*}
\end{proposition}

\begin{proof}
For any $k\in\natswithzero$, we infer from Proposition~\ref{prop:shift:global} that $\ljoint(\theta^kf)=\ljoint[k+1](\newlglobal{1}{f}{\cdot})$, and therefore, if we let $g\coloneqq\newlglobal{1}{f}{\cdot}$, 
\begin{equation*}
\lim_{n\to\infty}
\frac{1}{n}\sum_{k=0}^{n-1}\ljoint(\theta^kf)
=\lim_{n\to\infty}
\frac{1}{n}\sum_{k=0}^{n-1}\ljoint[k+1](g)
=\lim_{n\to\infty}
\frac{1}{n}\sum_{k=1}^{n}\ljoint[k](g)
=\ljoint[\infty](g),
\end{equation*}
where the last equality follows from Lemma~\ref{lem:advanced:bounding}\ref{it:advanced:bounding:expectations}.
\end{proof}

\begin{proposition}
Consider an imprecise Markov chain with marginal lower expectation $\ljoint[1]$ and {\pflike} lower transition operator $\ltrans$.
Then for any bounded shift invariant variable $f$:
\begin{equation*}
\ljoint[\infty](\newlglobal{1}{f}{\cdot})
\leq f(\pth)
\leq\ujoint[\infty](\newuglobal{1}{f}{\cdot})
\text{ almost surely}.
\end{equation*}
\end{proposition}

\begin{proof}
Consider any $n,m\in\natswithzero$.
It follows from Propositions~\ref{prop:global:shift:invariance} and~\ref{prop:global:iterated:conditional} that
\begin{equation*}
\ltrans^m\newlglobal{1}{f}{\cdot}
=\ltrans^m\newlglobal{m+n+1}{\theta^{m+n}f}{\cdot}
=\newlglobal{n+1}{\theta^{m+n}f}{\cdot}
\end{equation*}
Since $f$ is shift invariant, this becomes:
\begin{equation*}
\ltrans^m\newlglobal{1}{f}{\cdot}
=\newlglobal{n+1}{\theta^{m+n}f}{\cdot}
=\newlglobal{n+1}{f}{\cdot}
\end{equation*}
and if we assume that the lower transition operator is {\pflike}, and let $m\to\infty$ on both sides, we get:
\begin{equation*}
\ljoint[\infty](\newlglobal{1}{f}{\cdot})
=\newlglobal{n+1}{f}{\cdot},
\end{equation*}
and therefore, on any path $\pth$:
\begin{equation*}
\sitinlglobal{f}{\pth^{n+1}}
=\newlglobal{n+1}{f}{\pth_{n+1}}
=\ljoint[\infty](\newlglobal{1}{f}{\cdot})
\end{equation*}
Applying L\'evy's zero--one law [Theorem~\ref{thm:zero:one}] tells us that
\begin{equation*}
\limsup_{n\to+\infty}\sitinlglobal{f}{\pth^{n}}\leq f(\pth)\leq\liminf_{n\to+\infty}\sitinuglobal{f}{\pth^{n}}
\text{ almost surely},
\end{equation*}
so indeed
\begin{equation*}
\ljoint[\infty](\newlglobal{1}{f}{\cdot})
\leq f(\pth)
\leq\ujoint[\infty](\newuglobal{1}{f}{\cdot})
\text{ almost surely}.\qedhere
\end{equation*}
\end{proof}

*** The idea is now that $\liminf_{n\to\infty}\frac{1}{n}\sum_{k=0}^{n-1}\theta^kf$ and 
$\limsup_{n\to\infty}\frac{1}{n}\sum_{k=0}^{n-1}\theta^kf$ are shift invariant bounded variables, and that plugging them into the above almost sure inequality may lead to a point-wise ergodic theorem ***

\end{document}